\numberwithin{equation}{section}
\newtheorem{lemma}{Lemma} \numberwithin{lemma}{section}
\newtheorem{proposition}[lemma]{Proposition}
\newtheorem{theorem}[lemma]{Theorem}
\newtheorem{corollary}[lemma]{Corollary}
\renewcommand{\Im}{\mathop{\mathrm{Im}}\nolimits}
\newcommand{\Ker}{\mathop{\mathrm{Ker}}\nolimits}
\newcommand{\K}{{\mathrm{K}}}
\newcommand{\St}{\mathop{\mathrm{St}}\nolimits}
\newcommand{\E}{\mathrm{E}}
\newcommand{\Gsc}{\mathrm{G}_\mathrm{sc}}
\newcommand{\Max}{\mathrm{Max}}
\theoremstyle{definition}
\newtheorem{definition}[lemma]{Definition} \Crefname{definition}{Definition}{Definitions}
\newtheorem{example}[lemma]{Example}
\theoremstyle{definition}
\theoremstyle{remark}
\newtheorem{rem}[lemma]{Remark}
\newtheorem*{conv}{Convention}
\newcommand{\ZZ}{\mathbb{Z}}
\newcommand{\rA}{\mathsf{A}}
\newcommand{\rC}{\mathsf{C}}
\newcommand{\rD}{\mathsf{D}}
\DeclareMathOperator{\colim}{colim}
\newcommand{\catname}[1]{{\normalfont\textbf{#1}}} 
\title{On the $\mathbb{A}^1$-invariance of $\mathrm K_2$ modeled on linear and even orthogonal groups}
\author{Andrei Lavrenov}
\address{St. Petersburg State University, 14th Line V.O., 29B, Saint Petersburg 199178 Russia}
\address{Bar-Ilan University, 5290002 Ramat Gan, Israel}
\email{avlavrenov at gmail.com}
\author{Sergey Sinchuk}
\address{St. Petersburg State University, 14th Line V.O., 29B, Saint Petersburg 199178 Russia}
\email{sinchukss at gmail.com}
\author{Egor Voronetsky}
\address{St. Petersburg State University, 14th Line V.O., 29B, Saint Petersburg 199178 Russia}
\email{voronetckiiegor at yandex.ru}
\date {\today}
\begin{document}

\maketitle

\begin{abstract}Let $k$ be an arbitrary field. In this paper we show that in the linear case ($\Phi=\rA_\ell$, $\ell \geq 4$) and even orthogonal case ($\Phi = \rD_\ell$, $\ell\geq 7$, $\mathrm{char}(k)\neq 2$) the unstable functor $\mathrm{K}_2(\Phi, -)$ possesses the $\mathbb{A}^1$-invariance property in the geometric case, i.\,e. $\K_2(\Phi, R[t]) = \K_2(\Phi, R)$ for a regular ring $R$ containing $k$. As a consequence, the unstable $\K_2$ groups can be represented in the unstable $\mathbb{A}^1$-homotopy category $\mathscr{H}_\bullet(k)$ as fundamental groups of the simply-connected Chevalley--Demazure group schemes $\mathrm{G}(\Phi,-)$. Our invariance result can be considered as the $\K_2$-analogue of the geometric case of Bass--Quillen conjecture. We also show for a semilocal regular $k$-algebra $A$ that $\K_2(\Phi, A)$ embeds as a subgroup into $\K^\mathrm{M}_2(\mathrm{Frac}\,A)$.
\end{abstract}

\section{Introduction}

Many approaches to higher algebraic $\K$-theory and Hermitian $\K$-theory are known, so one might be interested in finding comparison results for them.
For example, recall from~\cite[Theorem~IV.11.8]{Kbook} that stable Quillen's groups $\K_n(R)$ (defined e.\,g. via the $+$-construction) and stable Karoubi--Villamayor groups $\mathrm{KV}_n(R)$ coincide for $n\geq 1$ if $R$ happens to be a regular ring.
These theories use infinite-dimensional algebraic groups such as $\mathrm{GL}_\infty(R)$ in their definition. 
The aim of this work is to obtain an {\it unstable} analogue of such result for the functor $\K_2$.

Recall that one can define and study the analogues of Quillen's and Karoubi--Villamayor's $\K$-groups for finite-dimensional algebraic groups, see e.\,g.~\cite{Abe83, Jar83, Pa89, Re75, Sta14, Sta20, St71, St78, Su82, Tu83, VW16}. The interest for the unstable $\K_2$-groups, in particular, comes from the fact that they appear in Steinberg's presentation of the groups of points of algebraic groups by means of generators and relations. Let us recall this presentation in greater detail. Let $R$ be a commutative ring with a unit and $\Phi$ be an irreducible root system of rank $\geq 2$. The Steinberg group $\St(\Phi, R)$ is the group defined by means of generators $x_\alpha(\xi)$ (where $\alpha\in\Phi$, $\xi\in R$) and Chevalley commutator formulas, see~\cite[Ch.~6]{St67} (see also~\cref{sec:main-results}).
If one additionally chooses a lattice $\Lambda$ located between the root lattice $Q(\Phi)$ and the weight lattice $P(\Phi)$ of $\Phi$ (i.\,e. $Q(\Phi) \leq \Lambda \leq P(\Phi)$), one can construct the Chevalley--Demazure group scheme $\mathrm{G}_\Lambda(\Phi, -)$, see~\cite[\S~3]{St67}. The {\it unstable $\K_1$- and $\K_2$-functors modeled on the Chevalley group $ \mathrm{G}_\Lambda(\Phi, -)$} are defined via the following exact sequence (cf.~\cite{St78}):
\begin{equation} \label{eq:main-exactSeq} \begin{tikzcd} \K_2^{\mathrm{G}_\Lambda(\Phi, -)}(R) \ar[hookrightarrow]{r} & \St(\Phi, R) \ar{r}{\pi} & \mathrm{G}_\Lambda(\Phi, R) \ar[twoheadrightarrow]{r} & \K_1^{\mathrm{G}_\Lambda(\Phi, -)}(R). \end{tikzcd} \end{equation}
Thus, the group $\K_2^{\mathrm{G}_\Lambda(\Phi, -)}(R)$ can be interpreted as the group of ``nontrivial'' relations between the images of elements $x_\alpha(\xi)$ in $\mathrm{G}_\Lambda(\Phi, R)$, i.\,e. the relations depending on the arithmetic of $R$.
\begin{conv}
In the sequel we denote by $\Gsc(\Phi,-)$ the simply-connected form of the Chevalley--Demazure group scheme of type $\Phi$, i.\,e. $\mathrm{G}_\mathrm{sc}(\Phi, R) := \mathrm{G}_{P(\Phi)}(\Phi, R)$ and denote by $\K_i(\Phi, R)$ the corresponding $\K_1$ and $\K_2$ functors, i.e. $\K_i(\Phi, R) := \K_i^{\mathrm{G}_\mathrm{sc}(\Phi, -)}(R)$, $i=1,2$. This notation agrees with~\cite{St78}.
For shortness we also set $G := \mathrm{G}_\Lambda(\Phi, -)$.
\end{conv}
Our first main result is the following

\begin{theorem}[The $\K_2$-analogue of Lindel--Popescu theorem] \label{theorem:LP-for-K2}
Let $k$ be an arbitrary field and $R$ be a regular ring containing $k$. Let $\Phi$ be either $\rA_\ell$ for $\ell\geq4$, or $\rD_\ell$ for $\ell\geq 7$. In the latter case assume additionally that the characteristic of $k$ is not $2$. Then for any lattice $\Lambda$ as above and $G = \mathrm{G}_\Lambda(\Phi, -)$ one has $\K_2^G(R[t])\cong\K_2^G(R).$
\end{theorem}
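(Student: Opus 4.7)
The plan is to adapt the Quillen--Lindel--Popescu pattern (as used for the geometric Bass--Quillen conjecture and its $\K_1$-analogues for Chevalley groups) to the unstable $\K_2$ setting. The argument proceeds in four steps.

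\textbf{Step 1 (Reduction to smooth $k$-algebras).} By Popescu's theorem, any regular ring $R$ containing $k$ is a filtered colimit of smooth $k$-algebras of finite type. The Steinberg group $\St(\Phi,-)$ and the Chevalley group $G$ both commute with filtered colimits of rings, hence so does the kernel functor $\K_2^G(-)$ from~\eqref{eq:main-exactSeq}. It therefore suffices to prove the theorem when $R$ is smooth of finite type over $k$.

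\textbf{Step 2 (Local-global principle).} Next I would reduce to the case when $A$ is a local essentially smooth $k$-algebra. The required tool is a Zariski (or Nisnevich) local-global principle for the relative functor $A\mapsto \ker\bigl(\K_2^G(A[t])\to \K_2^G(A)\bigr)$, with the splitting induced by $t\mapsto 0$. This is the unstable $\K_2$ analogue of Quillen's patching / Suslin's local-global theorem for projective modules and of its $\K_1$-version for Chevalley groups (Abe, Stepanov, Stavrova, etc.). It is at this point that the rank restrictions $\ell\geq 4$ (resp.\ $\ell\geq 7$) are expected to enter, since stability/centrality results for $\St(\Phi,-)$ typically require them.

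\textbf{Step 3 (Lindel's trick plus a Horrocks-type theorem).} Once $A$ is local essentially smooth over $k$, Lindel's trick provides an essentially étale map $S\to A$ from a suitable local ring $S$ of $k[x_1,\ldots,x_n]$ such that the residue fields agree. Combined with an unstable $\K_2$-analogue of the affine Horrocks theorem --- controlling how an element of $\K_2^G(A[t])$ is detected after inverting $t$ or after a principal localization --- this identifies the relative $\K_2^G$ over $A$ with that over $S$, and reduces the statement to localizations of polynomial rings over $k$.

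\textbf{Step 4 (Base case).} For $A$ a local ring of $\mathbb{A}^n_k$ I would argue by induction on $n$, the case $A=k$ being classical: in types $\rA_\ell$ and $\rD_\ell$ Matsumoto's theorem identifies $\K_2^G(k)$ with the Milnor--Matsumoto group $\K_2^{\mathrm M}(k)$, and $\mathbb{A}^1$-invariance of the latter is well known. The inductive step is handled via the Horrocks-type theorem of Step~3 applied to the extra variable.

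The main obstacle is Step~3: establishing a sufficiently strong Horrocks-type / excision statement for unstable $\K_2^G(R[t])$. One must control arbitrary Steinberg generators $x_\alpha(f(t))$ with polynomial coefficients and rewrite their relations in terms of $t$-independent data, which for $\K_2$ is substantially more delicate than the $\K_0$- or $\K_1$-versions because of the commutator relations in $\St(\Phi,R[t])$. This is also where the characteristic assumption for $\rD_\ell$ is expected to be used, to avoid pathological behaviour of the even orthogonal Steinberg relations in characteristic~$2$.
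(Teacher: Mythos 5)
Your outline follows the same broad Quillen--Lindel--Popescu pattern as the paper (Popescu reduction, a local--global principle, an \'etale-neighborhood step, a Horrocks-type input, and the known case of fields), but the crucial Step~3 has a genuine gap. To descend the relative group $\ker\bigl(\K_2^G(A[t])\to\K_2^G(A)\bigr)$ along Lindel's \'etale map $S\to A$ you need an excision/descent statement for unstable $\K_2$ (equivalently, a glueing theorem for Steinberg groups over a distinguished affine Nisnevich square). This is not a variant of the Horrocks theorem: it is an independent and substantial result --- in the paper it is Theorem~\ref{glueing}, proved by the pro-group conjugation calculus of \cite{LSV20}, and its proof occupies all of Section~4. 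Your proposal conflates the two inputs and, moreover, asks for more than excision delivers: weak excision only gives surjectivity of the induced map on kernels of localization maps, not an identification of the relative functors over $S$ and over $A$. The Horrocks input itself (the $\mathbb{P}^1$-glueing property) is already available in the literature (\cite{Tu83} for $\rA_\ell$, \cite{LS20} for $\rD_\ell$) --- and it is there, not in the local--global step, that the restrictions $\ell\ge 7$ and $\mathrm{char}(k)\ne 2$ originate; the local--global principle instead rests on the Tulenbaev lifting property for the Milnor square $R\ltimes tR_a[t]$.

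The paper also deviates from your Steps~3--4 in a way worth noting: instead of Lindel's theorem and an induction on the number of variables, it uses Panin's theorem on nice triples, which produces a monic polynomial $h\in A[t]$ and an \'etale neighborhood of $V(h)$ inside $\mathbb{A}^1\times\mathrm{Spec}(A)$; combining Nisnevich excision with the injectivity of $K(R[t])\to K(R[t]_h)$ for monic $h$ (itself a consequence of the Horrocks property plus Zariski excision), one obtains that $\K_2^G(A[x])$ injects into $\K_2^G(E[x])$ for the fraction field $E$, and then homotopy invariance over fields (Rehmann) finishes the proof with no induction on variables and no base-case analysis of localizations of polynomial rings. Finally, your proposal omits the passage from the simply connected form to a general lattice $\Lambda$, which the paper handles by a five-lemma argument using that $R[t]$ is reduced.
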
 

As a corollary of~\cref{theorem:LP-for-K2}, we show that the unstable groups $\K_2^G(R)$ coincide with Karoubi--Villamayor groups $\mathrm{KV}_2^G(R)$ (see~\cref{sec:K2-as-pi-1} for the definition). It follows from~\cite{AHW18} that the latter groups can be interpreted as fundamental groups of the pointed scheme $G$ in the unstable $\mathbb{A}^1$-homotopy category $\mathscr{H}_\bullet(k)$ of F.~Morel and V.~Voevodsky~\cite{MV99}. Thus, we obtain the following

\begin{corollary} \label{cor:motivic-pi1} (see Corollary~\ref{cor:motivic-pi1-2}) Let $\Phi$, $\Lambda$ and $k$ be as in~\cref{theorem:LP-for-K2}. Then for any $k$-smooth algebra $R$ and $G = \mathrm{G}_{\Lambda}(\Phi, -)$ one has
\[ \pi_1^{\mathbb{A}^1}(G)(R) := \mathrm{Hom}_{\mathscr{H}_{\bullet}(k)}(S^1 \wedge \mathrm{Spec}(R)_+, G) = \mathrm{KV}_2^{G}(R) = \K_2^G(R),\]
where $\mathscr{H}_\bullet(k)$ denotes the pointed unstable $\mathbb{A}^1$-homotopy category over $k$.
\end{corollary}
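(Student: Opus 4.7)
The strategy is to combine Theorem~\ref{theorem:LP-for-K2} with the affine representability results of Asok--Hoyois--Wendt~\cite{AHW18}: these give the two equalities $\pi_1^{\mathbb{A}^1}(G)(R)=\mathrm{KV}_2^G(R)$ and $\mathrm{KV}_2^G(R)=\K_2^G(R)$ separately, and the corollary follows by composing them.

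For the first equality I would appeal directly to~\cite{AHW18}. The Chevalley--Demazure scheme $G=\mathrm{G}_\Lambda(\Phi,-)$ is a split isotropic reductive $k$-group scheme of rank at least $4$, hence satisfies the hypotheses of the affine representability theorem loc.\,cit. That theorem asserts that, for any smooth $k$-algebra $R$, the set $\mathrm{Hom}_{\mathscr{H}_\bullet(k)}(S^1\wedge\mathrm{Spec}(R)_+,G)$ is computed as $\pi_1$ of the simplicial group $G(R[\Delta^\bullet])$; by the definition recalled in Section~\ref{sec:K2-as-pi-1}, this last group is exactly $\mathrm{KV}_2^G(R)$.

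For the second equality I would use that $R[\Delta^n]\cong R[t_1,\dots,t_n]$ (via $t_0\mapsto 1-t_1-\dots-t_n$), so each $R[\Delta^n]$ is again a regular $k$-algebra. Iterating Theorem~\ref{theorem:LP-for-K2} one variable at a time yields $\K_2^G(R[\Delta^n])=\K_2^G(R)$ for every $n\ge 0$, so the simplicial abelian group $\K_2^G(R[\Delta^\bullet])$ is constant. Feeding this into the long exact sequence of simplicial homotopy groups attached to the Steinberg cover~\eqref{eq:main-exactSeq}, and using that $\St(\Phi,-)$ is the fiberwise universal central extension of the elementary subgroup $\E(\Phi,-)$, one identifies the canonical comparison map $\K_2^G(R)\to\mathrm{KV}_2^G(R)$ with an isomorphism.

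The substantive difficulty is entirely carried by Theorem~\ref{theorem:LP-for-K2}; the corollary is formal once that $\mathbb{A}^1$-invariance is granted. What remains to verify is of a standard character: that the split isotropic reductive $k$-group $G$ of large enough rank indeed meets the hypotheses of~\cite{AHW18}, and that the long-exact-sequence argument for the simplicial Steinberg cover over $R[\Delta^\bullet]$ goes through without change.
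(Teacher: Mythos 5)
Your overall route coincides with the paper's: the first equality via the affine representability machinery of \cite{AHW18}, and the second via the long exact sequence of simplicial homotopy groups for the degreewise surjection $\St(\Phi,R[\Delta^\bullet])\twoheadrightarrow\E_\Lambda(\Phi,R[\Delta^\bullet])$ combined with the $\mathbb{A}^1$-invariance of $\K_2^G$ from Theorem~\ref{theorem:LP-for-K2}. However, there is a genuine gap in the second step. The long exact sequence reads
\[
\pi_1\bigl(\St(\Phi,R[\Delta^\bullet])\bigr)\to\pi_1\bigl(\E_\Lambda(\Phi,R[\Delta^\bullet])\bigr)\to\pi_0\bigl(\K_2^G(R[\Delta^\bullet])\bigr)\to\pi_0\bigl(\St(\Phi,R[\Delta^\bullet])\bigr),
\]
and the discreteness of $\K_2^G(R[\Delta^\bullet])$ (which your iteration of Theorem~\ref{theorem:LP-for-K2} correctly yields) only identifies the third term with $\K_2^G(R)$. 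To conclude that the middle map is an isomorphism you must also know that the simplicial Steinberg group is connected \emph{and simply connected}. Connectedness is easy, but simple connectedness is not a formal consequence of $\St(\Phi,-)$ being a universal central extension of $\E_\Lambda(\Phi,-)$ in each simplicial degree: it is a statement about $\pi_1$ of the simplicial object and requires an explicit computation with the normalized Moore complex --- one must identify $\Ker(\partial_1)$ with $\overline{\St}(\Phi,R[t_1],\langle t_1(t_1-1)\rangle)$ and lift its generators to $N_2$. This is Proposition~\ref{prop:pi1-StDelta} of the paper and is the substantive content of the comparison $\mathrm{KV}_2^G=\K_2^G$ beyond Theorem~\ref{theorem:LP-for-K2}; your proposal does not supply it, and labelling the long-exact-sequence argument as ``standard'' hides precisely this point.

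Two smaller omissions. First, to replace $\pi_1(G(R[\Delta^\bullet]))$ by $\pi_1(\E_\Lambda(\Phi,R[\Delta^\bullet]))$ one needs the simplicial group $\K_1^G(R[\Delta^\bullet])$ to be discrete, i.e. the $\mathbb{A}^1$-invariance of the unstable $\K_1$-functor due to Stavrova. Second, for the first equality the hypothesis of \cite[Theorem~2.4.2]{AHW18} that actually has to be verified is that $G$ defines an $\mathbb{A}^1$-naive presheaf, which rests on the $\mathbb{A}^1$-invariance of $\mathrm H^1_{\mathrm{Nis}}(-,G)$ on smooth affine $k$-schemes (again Stavrova), not merely on $G$ being split reductive of large rank.
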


Our method of the proof of~\cref{theorem:LP-for-K2} also allows us to obtain the following assertion for the functor $\K_2(\Phi, R)$ as a byproduct, cf. e.\,g. with~\cite[Theorem~1.2]{Sta20}.
\begin{theorem} \label{theorem:Gersten} Let $\Phi$ and $k$ be as in~\cref{theorem:LP-for-K2} and let $A$ be an arbitrary semi-local regular domain containing $k$. Then the homomorphism $\K_2(\Phi, A) \to \K_2(\Phi, \mathrm{Frac}(A))$ induced by the embedding of $A$ into its field of fractions $\mathrm{Frac}(A)$, is injective. Moreover, the canonical homomorphism $\St(\Phi, A) \to \St(\Phi, \mathrm{Frac}(A))$ is injective and the group $\K_2(\Phi, A)$ embeds into the Milnor $\K_2$-group $\K_2^\mathrm{M}(\mathrm{Frac}(A))$. \end{theorem}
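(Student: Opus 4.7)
The three assertions are interlinked, so I would first collapse them to a single injectivity claim. For a field $F$ the canonical map $\K_2(\Phi, F) \to \K_2^\mathrm{M}(F)$ is known to be injective under the hypotheses on $\Phi$ in the theorem (by classical work of Matsumoto in type $\rA$ and of Rehmann--Suslin--van der Kallen in type $\rD$), so the third assertion follows from the first. Injectivity of $\St(\Phi, A) \to \St(\Phi, F)$ is in turn equivalent to injectivity of $\K_2(\Phi, A) \to \K_2(\Phi, F)$: a short diagram chase in~\eqref{eq:main-exactSeq} combined with the tautological injectivity $\Gsc(\Phi, A) \hookrightarrow \Gsc(\Phi, F)$ (affine group scheme, $A$ a domain in $F$) does the job. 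Thus the whole theorem reduces to the single claim that $\K_2(\Phi, A) \to \K_2(\Phi, F)$ is injective.

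Next I would reduce this claim to the essentially smooth case. By N\'eron--Popescu desingularization, the regular $k$-algebra $A$ is a filtered colimit of smooth $k$-algebras, and the functors $\St(\Phi, -)$, $\K_2(\Phi, -)$ and $\K_2^\mathrm{M}(-)$ all commute with filtered colimits of rings. Since $A$ itself is semilocal, one can arrange that the problem reduces to the case where $A$ is a semilocalization of a smooth affine $k$-algebra at finitely many primes.

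The remaining case is where~\cref{theorem:LP-for-K2} enters crucially. Given $\xi \in \K_2(\Phi, A)$ dying in $\K_2(\Phi, F)$, it already dies after inverting some non-zero $g \in A$. I would then apply a geometric presentation lemma in the spirit of Quillen, Gabber and Colliot-Th\'el\`ene--Hoobler--Kahn/Panin to embed $\mathrm{Spec}(A)$ into a smooth affine $k$-scheme equipped with a smooth morphism to a smooth base of strictly smaller dimension whose fibers are open subsets of $\mathbb{A}^1$, with the closed subscheme cut out by $g$ finite over the base. A class vanishing off such a divisor can then be moved to the zero section by a Quillen-style homotopy, after which the $\mathbb{A}^1$-invariance supplied by~\cref{theorem:LP-for-K2} forces $\xi = 1$ in $\K_2(\Phi, A)$.

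The main technical obstacle I foresee is establishing enough Nisnevich or Mayer--Vietoris descent for $\K_2^G$ to plug into the presentation-lemma argument. Such patching is genuinely non-formal and must be one of the central ingredients in the proof of~\cref{theorem:LP-for-K2} itself; once it is in place, the Gersten-type injectivity stated above emerges with almost no extra input, which is precisely why the authors describe it as a byproduct of the proof of~\cref{theorem:LP-for-K2}.
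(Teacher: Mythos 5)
Your overall route is the one the paper takes: the reduction of all three assertions to the single injectivity of $\K_2(\Phi,A)\to\K_2(\Phi,\mathrm{Frac}(A))$ (via injectivity of $\Gsc(\Phi,A)\to\Gsc(\Phi,\mathrm{Frac}(A))$ and Matsumoto's theorem, which for these simply-laced types gives an isomorphism $\K_2(\Phi,E)\cong\K_2^\mathrm{M}(E)$ outright), followed by N\'eron--Popescu reduction to a semilocalization of a smooth $k$-domain and a geometric presentation lemma — the paper uses precisely Panin's theorem (\cref{paninthm}) together with weak Nisnevich excision, packaged as \cref{thm:gb} and \cref{esssmooth}. The paper does not deduce this from \cref{theorem:LP-for-K2}; it reruns the same axioms (A1)--(A4), which is what ``byproduct of the method'' means.

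The one step of your sketch that does not work as stated is the conclusion ``after which the $\mathbb{A}^1$-invariance supplied by \cref{theorem:LP-for-K2} forces $\xi=1$.'' After excision you hold a class $g_2\in K(A[t])$ killed by inverting a \emph{monic} polynomial $h$; $\mathbb{A}^1$-invariance only tells you that $g_2$ is constant, i.e.\ pulled back from $K(A)$, and to kill a constant class that dies in $K(A[t]_h)$ you would want to evaluate $t$ at a point where $h$ is a unit — such a point need not exist when $A$ has small (e.g.\ finite) residue fields. What actually closes the argument in the paper is \cref{lmp}: for monic $f$ the map $K(R[t])\to K(R[t]_f)$ is \emph{injective}, and this is proved not from $\mathbb{A}^1$-invariance but from the $\mathbb{P}^1$-glueing (Horrocks) property \ref{PGP} combined with Zariski excision, by passing to the chart at infinity where the monic $h$ becomes a unit plus higher-order terms in $t^{-1}$. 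So the essential extra input beyond Nisnevich descent is the unstable Horrocks theorem for $\K_2$ (Tulenbaev in type $\rA$, \cite{LS20} in type $\rD$) — this, rather than the descent alone, is what makes the final step go through, and it is also the source of the restrictions $\ell\geq 7$ and $\mathrm{char}(k)\neq 2$ in type $\rD$.
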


As yet another consequence of~\cref{theorem:LP-for-K2} we obtain the following results.
\begin{corollary} \label{cor:various-facts}
For $k$ with $\mathrm{char}(k)\neq 2$, $\ell \geq 7$ and $n\geq 0$ the group $\mathrm{Spin}_{2\ell}(k[t_1,\ldots, t_n])$ admits presentation as in~\cite[\S~6]{St67} or \cite[\S~5]{Ma69}, i.e. the presentation by means of generators $x_\alpha(\xi)$, $\xi \in k[t_1,\ldots, t_n]$ subject to relations~\eqref{R1}--\eqref{R3} and relations $h_\alpha(u)h_\alpha(v) = h_\alpha(uv)$, where $u, v\in k^\times$. Additionally, $\mathrm H_2(\mathrm{Spin}_{2\ell}(k[t_1,\ldots,t_n]),\,\mathbb Z\big) = \K^\mathrm{M}_2(k).$
\end{corollary}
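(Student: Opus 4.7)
The plan is to combine Theorem~\ref{theorem:LP-for-K2} with three classical inputs: Matsumoto's description of $\K_2$ for simply-laced Chevalley groups over a field, the $\mathbb{A}^1$-invariance of $\K_1$ for $\mathrm{Spin}$-groups over regular $k$-algebras, and the universal-central-extension property of the Steinberg group.

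First I would iterate Theorem~\ref{theorem:LP-for-K2} and apply Matsumoto's theorem to identify
\[ \K_2(\rD_\ell, k[t_1,\ldots,t_n]) \;\cong\; \K_2(\rD_\ell, k) \;\cong\; \K_2^\mathrm{M}(k). \]
The known $\K_1$-invariance for simply-connected Chevalley groups of type $\rD_\ell$ over regular $k$-algebras, together with $\K_1(\rD_\ell, k) = 1$, gives $\mathrm{Spin}_{2\ell}(k[t_1,\ldots,t_n]) = \E(\rD_\ell, k[t_1,\ldots,t_n])$. Substituting these into~\eqref{eq:main-exactSeq} produces a central extension
\[ 1 \to \K_2^\mathrm{M}(k) \to \St(\rD_\ell, k[t_1,\ldots,t_n]) \xrightarrow{\pi} \mathrm{Spin}_{2\ell}(k[t_1,\ldots,t_n]) \to 1. \]
The base-change map $\St(\rD_\ell, k) \to \St(\rD_\ell, k[t_1,\ldots,t_n])$ induces an isomorphism on $\K_2$-subgroups by Theorem~\ref{theorem:LP-for-K2}, so $\Ker(\pi)$ is generated by the image of $\K_2(\rD_\ell, k) \subseteq \St(\rD_\ell, k)$; by Matsumoto this subgroup is in turn generated by the symbols $h_\alpha(uv)\,h_\alpha(v)^{-1}\,h_\alpha(u)^{-1}$ with $u, v \in k^\times$ and a fixed simple root $\alpha$. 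Imposing these relations on top of (R1)--(R3) therefore recovers exactly the Matsumoto--Steinberg presentation asserted in the corollary.

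For the Schur multiplier $\mathrm{H}_2(\mathrm{Spin}_{2\ell}(k[t_1,\ldots,t_n]),\mathbb{Z})$, I would invoke the standard principle that a central extension of a perfect group by a superperfect one is universal, with kernel equal to the Schur multiplier of the quotient. Perfectness of $\St(\rD_\ell, k[t_1,\ldots,t_n])$ is immediate from the Chevalley commutator relations for $\ell \geq 3$, so it remains to verify $\mathrm{H}_2(\St(\rD_\ell, k[t_1,\ldots,t_n]),\mathbb{Z}) = 0$, equivalently, that the Steinberg group is itself the universal central extension of the elementary subgroup. Over fields this is Matsumoto's theorem, and over $k[t_1,\ldots,t_n]$ it can either be cited from the existing literature on universality of Steinberg groups (van der Kallen, Tulenbaev, Stein), or deduced by applying Theorem~\ref{theorem:LP-for-K2} together with the five-term homology sequence to transport universality from the field case.

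The main obstacle is precisely this superperfectness of the polynomial Steinberg group, i.e.\ the universality of $\St(\rD_\ell, R) \twoheadrightarrow \E(\rD_\ell, R)$ for $R = k[t_1,\ldots,t_n]$. Once it is granted, both parts of the corollary follow by formal manipulation of the displayed central extension.
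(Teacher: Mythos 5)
Your proposal is correct and follows essentially the same route as the paper: $\mathbb{A}^1$-invariance plus Matsumoto for the presentation, and the identification $\mathrm H_2(\E_{\mathrm{sc}}(\rD_\ell,R),\ZZ)=\K_2(\rD_\ell,R)$ for the Schur multiplier. The ``main obstacle'' you flag (superperfectness of $\St(\rD_\ell,k[t_1,\ldots,t_n])$ together with centrality of the kernel) is exactly what the paper disposes of by citation --- Stein's vanishing of $\mathrm H_1$ and $\mathrm H_2$ of the Steinberg group over an arbitrary commutative ring \cite[Theorem~5.3]{St71} combined with the centrality result of \cite{LS17} --- so no transport from the field case is needed.
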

\begin{corollary} \label{cor:H_2-O}
 For an arbitrary field $k$ of characteristic $\neq 2$, a regular $k$-algebra $R$, $\ell \geq 7$ one has
 \begin{align}
  \mathrm H_2 (\mathrm{SO}_{2\ell}(R[t]), \ZZ) =&\ \mathrm H_2 (\mathrm{SO}_{2\ell}(R), \ZZ); \label{eq:H_2-SO} \\
  \mathrm H_2 (\mathrm{O}_{2\ell}(R[t]), \ZZ) =&\ \mathrm H_2 (\mathrm{O}_{2\ell}(R), \ZZ). \label{eq:H_2-O}
 \end{align}
\end{corollary}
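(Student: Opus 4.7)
The plan is to deduce both statements from~\cref{theorem:LP-for-K2} by combining the universal central extension interpretation of $H_2$ with Lyndon--Hochschild--Serre (LHS) comparison arguments for the two short exact sequences
\[
1 \to \mu_2 \to \mathrm{Spin}_{2\ell} \to \mathrm{SO}_{2\ell} \to 1, \qquad 1 \to \mathrm{SO}_{2\ell} \to \mathrm{O}_{2\ell} \to \ZZ/2 \to 1.
\]

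The principal step is the computation of $H_2$ of the simply-connected Chevalley group $\mathrm{Spin}_{2\ell}(A) = \Gsc(\rD_\ell, A)$ for $A \in \{R, R[t]\}$. For $\ell \geq 3$ the elementary subgroup $\E(\rD_\ell, A)$ is perfect, and the Steinberg extension~\eqref{eq:main-exactSeq} presents $\St(\rD_\ell, A) \twoheadrightarrow \E(\rD_\ell, A)$ as the universal central extension, by classical results of Stein and van der Kallen. Kervaire's theorem therefore identifies $H_2(\E(\rD_\ell, A), \ZZ) \cong \K_2(\rD_\ell, A)$, which is $\mathbb{A}^1$-invariant by~\cref{theorem:LP-for-K2}. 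Combining this with a second LHS argument for the extension $1 \to \E(\rD_\ell, A) \to \mathrm{Spin}_{2\ell}(A) \to \K_1(\rD_\ell, A) \to 1$, together with $\mathbb{A}^1$-invariance of $\K_1(\rD_\ell, -)$ on regular $k$-algebras (which either is on record or follows, at the considerably easier $\K_1$-level, by the same methods as~\cref{theorem:LP-for-K2}), transfers $\mathbb{A}^1$-invariance of $\K_2$ into $\mathbb{A}^1$-invariance of $H_2(\mathrm{Spin}_{2\ell}(-), \ZZ)$.

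To descend to $\mathrm{SO}_{2\ell}$ I would use the LHS five-term exact sequence of the central extension by $\mu_2$,
\[
H_2(\mathrm{Spin}_{2\ell}(A), \ZZ) \to H_2(\mathrm{SO}_{2\ell}(A), \ZZ) \to \mu_2 \to H_1(\mathrm{Spin}_{2\ell}(A), \ZZ) \to H_1(\mathrm{SO}_{2\ell}(A), \ZZ) \to 0,
\]
and compare for $A = R[t]$ versus $A = R$ via evaluation at $t = 0$. The four outer vertical arrows become isomorphisms (the $H_2$-arrow by the previous paragraph, the $\mu_2$-arrow trivially, and the two $H_1$-arrows by $\mathbb{A}^1$-invariance of $\K_1(\rD_\ell,-)$ and of its $\mu_2$-quotient), so the five-lemma delivers~\eqref{eq:H_2-SO}. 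For~\eqref{eq:H_2-O} I would run the analogous LHS argument for the split extension $1 \to \mathrm{SO}_{2\ell}(A) \to \mathrm{O}_{2\ell}(A) \to \ZZ/2 \to 1$ (split by any fixed reflection, available since $\mathrm{char}(k) \neq 2$); the $E^2$-page $H_p(\ZZ/2;\, H_q(\mathrm{SO}_{2\ell}(A), \ZZ))$ is functorial in $A$ and all low-degree entries contributing to $H_2$ are $\mathbb{A}^1$-invariant by~\eqref{eq:H_2-SO}, whence another five-lemma gives~\eqref{eq:H_2-O}.

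The main obstacle I anticipate lies in the first paragraph: verifying the $\mathbb{A}^1$-invariance of $\K_1(\rD_\ell, -)$ on regular $k$-algebras in the generality required, and carefully propagating it through the LHS spectral sequence of $\E(\rD_\ell, A) \hookrightarrow \mathrm{Spin}_{2\ell}(A)$. One needs to control not only the five-term piece but also the higher differentials involving $H_p(\K_1;\, \K_2)$; here triviality of the $\K_1$-action on $\K_2$ (which follows from centrality of $\K_2$ in $\St$) is essential. A secondary technical point is the $\ZZ/2$-module structure on $H_*(\mathrm{SO}_{2\ell}(A), \ZZ)$ in the $\mathrm{O}/\mathrm{SO}$ spectral sequence, but this is natural in $A$ and hence automatically compatible with evaluation.
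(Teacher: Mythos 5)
Your first step (identifying $\mathrm H_2(\E(\rD_\ell,A),\ZZ)$ with $\K_2(\rD_\ell,A)$ via the universal central extension and transporting $\mathbb A^1$-invariance along $\E \hookrightarrow \mathrm{Spin}_{2\ell} \twoheadrightarrow \K_1$) is sound in substance, though the centrality of $\K_2(\rD_\ell,R)$ in $\St(\rD_\ell,R)$ for general commutative rings is not classical — it is the recent result of \cite{LS17, LSV20} on which equation~\eqref{eq:H2-K2} rests. The fatal problem is the descent from $\mathrm{Spin}$ to $\mathrm{SO}$. The sequence $1 \to \mu_2 \to \mathrm{Spin}_{2\ell} \to \mathrm{SO}_{2\ell} \to 1$ is exact only as a sequence of fppf sheaves; on points the map $\mathrm{Spin}_{2\ell}(A) \to \mathrm{SO}_{2\ell}(A)$ is \emph{not} surjective in general, its cokernel being controlled by the spinor norm with values in $H^1_{\mathrm{fppf}}(A,\mu_2)$. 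Already for the split form over a field $k$ the spinor norm maps $\mathrm{SO}_{2\ell}(k)$ onto $k^\times/(k^\times)^2$, so the image of $\mathrm{Spin}_{2\ell}(k)$ is a proper normal subgroup whenever $k$ has a non-square unit. Consequently the five-term Lyndon--Hochschild--Serre sequence you write down for this "extension" does not exist, and the whole middle paragraph collapses. (The final $\mathrm O/\mathrm{SO}$ step via $\det$ is fine, since that sequence really is exact on points for the split form.)

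The paper sidesteps this entirely and is shorter: it applies the LHS spectral sequence to the sequence $1 \to \mathrm{EO}_{2\ell}(A) \to \mathrm{O}_{2\ell}(A) \to \mathrm{KO}_{1,2\ell}(A) \to 1$, which is exact \emph{by definition} of the unstable $\K_1$-functor. Since $\mathrm{EO}_{2\ell}(A)$ is perfect, one gets the exact sequence
\[ \mathrm H_3(\mathrm{KO}_{1,2\ell}(A)) \to \mathrm H_2(\mathrm{EO}_{2\ell}(A))_{\mathrm{KO}_{1,2\ell}(A)} \to \mathrm H_2(\mathrm{O}_{2\ell}(A)) \to \mathrm H_2(\mathrm{KO}_{1,2\ell}(A)) \to 1, \]
and then uses that $\mathrm H_2(\mathrm{EO}_{2\ell}(-)) = \K_2^{\mathrm{SO}_{2\ell}}(-)$ is $\mathbb A^1$-invariant by \cref{theorem:LP-for-K2} — note that the theorem is stated for an arbitrary lattice $\Lambda$, so it covers the non-simply-connected group $\mathrm{SO}_{2\ell}$ directly (this is exactly what the snake-lemma diagram~\eqref{eq:snake-A1} in its proof is for) — together with $\mathbb A^1$-invariance of $\mathrm{KO}_{1,2\ell}$ and the five lemma. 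If you want to salvage your route, you should replace the $\mathrm{Spin}\to\mathrm{SO}$ comparison by this one-step argument; no passage through the simply-connected group is needed.
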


Recall that for $i \geq 0$ the $\mathbb{A}^1$-invariance for the stable linear functors $\K_i(R)$ and stable orthogonal functors $\mathrm{KO}_i(R)$ (see~\cite{Ho05} for the definition) is well-known for regular $R$, see e.\,g.~\cite[Theorem~V.6.3]{Kbook} and \cite[Corollary~1.12]{Ho05}, respectively. The classical Bass--Quillen conjecture asks if the $\mathbb{A}^1$-invariance for the functor $\K_0$ holds {\it unstably}, i.e. whether the set $\mathrm{VB}_n(R)$ of isomorphism classes of vector bundles of constant rank $n$ over $\mathrm{Spec}(R)$ has the property $\mathrm{VB}_n(R) = \mathrm{VB}_n(R[t])$. 
The classical Lindel--Popescu theorem for projective modules provides an affirmative answer to this question in the case when $R$ is regular ring containing a field $k$, see~\cite[\S~VIII.6]{Lam10},\cite{Li81, Po85}. In fact, the affirmative answer to the Bass--Quillen conjecture is known when $k$ is a Dedekind domain whose residue fields are perfect and $k \to R$ is a regular homomorphism in the sense of~\cite[\S~1]{Sw98}, see~\cite[Theorem~5.2.1]{AHW17}.
Thus, \cref{theorem:LP-for-K2} can be considered as the solution to the $\K_2$-analogue of the Bass--Quillen conjecture in the geometric case for the split linear and orthogonal groups.
Recall also that the $\mathbb{A}^1$-invariance for the unstable $\K_1$-functor modeled on Chevalley groups of rank $\geq 2$ has been established in~\cite{Abe83} and has been subsequently generalized to reductive groups of isotropic rank $\geq 2$ in the recent works of A.~Stavrova, see~\cite[Theorem~1.3]{Sta14}, \cite[Theorem~1.1]{Sta20}.

When $R=k$ is a field the assertion of~\cref{theorem:LP-for-K2} is known from~\cite{Re75} for all $\Phi$ (see Korollar of~Satz~1). Also if $\Phi = \rA_\ell$ and $\ell \geq \mathrm{max}(4, \mathrm{dim}(R) + 2)$ the assertion of~\cref{theorem:LP-for-K2} follows from the main result of~\cite{Tu83}. It is likely to remain true for all $\Phi$ of rank $\geq 3$ without any assumptions on the characteristic of $k$. It is false for $\Phi$ of rank $2$, however, see~\cite{We12}. The bottleneck in our proof which leads to the restrictive assumptions $\ell\geq 7$ and $\mathrm{char}(k)\neq 2$ in the case $\Phi=\rD_\ell$ is the orthogonal Horrocks theorem for $\K_2$ proved in \cite{LS20}, see Theorem~1. In our future papers we plan to address this issue and obtain an improved Horrocks theorem for all simply-laced $\Phi$ containing $\rA_4$ without the invertibility of $2$ assumption.

Recall from the~\cite[Theorem~5.3]{St71} that under the assumptions on the rank of $\Phi$ stated in~\cref{theorem:LP-for-K2} the first two homology groups of $\St(\Phi, R)$ vanish. On the other hand, the main result of~\cite{LSV20} asserts that $\K_2^G(R)$ is a central subgroup of $\St(\Phi, R)$ and, in particular, an abelian group.
From this and \S~IV.1 of~\cite{Kbook} it follows that the group $\K_2^G(R)$ coincides with the unstable $\K_2$-group defined by means of Quillen's $+$-construction:
\begin{equation} \label{eq:H2-K2}
  \K_{2}^{G,Q}(R) := \pi_2\left(\mathrm BG(R)^+_{\E_\Lambda(\Phi, R)}\right) = \mathrm{H}_2(\mathrm{E}_\Lambda(\Phi, R), \ZZ) = \K_2^G(R),
\end{equation}
where $\E_\Lambda(\Phi, R) = \mathrm{Im}(\pi)$ denotes the {\it elementary subgroup of $G(R) = \mathrm{G}_\Lambda(\Phi, R)$}  (see~\cref{sec:main-results}) and $\mathrm BG(R)$ is the topological classifying space of the discrete group $G(R)$. Thus, in particular, the definitions of the unstable $\K_2$-functors a la Quillen and a la Karoubi--Villamayor agree for any regular ring $R$ containing a field $k$ and $\Phi$ satisfying the assumptions of~\cref{theorem:LP-for-K2} (see Corolary~\ref{cor:+=A1}). For the more detailed exposition of Quillen's $+$-construction applied to Chevalley groups we refer the reader to~\cite[Subsection~2.3]{LS20}, more details on~(\ref{eq:H2-K2}) can also be found in the introduction of~\cite{LSV20}).

One of the ingredients in the proof of~\cref{theorem:LP-for-K2} is~\cref{lpb}, which gives a sufficient condition for a general group-valued functor to be $\mathbb{A}^1$-invariant. In turn, the proof of~\cref{lpb} is based on a recent result of I.~Panin, see~\cite[Theorem~2.5]{Pa19}. 

The sufficient condition of~\cref{lpb} is given as a short list of axioms. The hardest of these to verify is the so-called {\it $\mathbb{P}^1$-glueing property} of the unstable functor $\K_2$ (this property is sometimes also called Horrocks theorem, see e.\,g.~\S~IV.2, \S~VI.5 of \cite{Lam10}). In the unstable linear and orthogonal case the Horrocks theorem for $\K_2$ has been obtained in \cite{Tu83} and \cite{LS20}, respectively. Yet another axiom appearing in the statement of~\cref{lpb} is the so-called {\it weak affine Nisnevich excision property} for Steinberg groups. The verification of this property for simply-laced Steinberg groups is another central result of the paper, see~\cref{glueing}. Unlike M.~Tulenbaev, who used the technique of van der Kallen's ``another presentation'' in the proof of the weaker Zariski excision property~\cite[Proposition~1.4]{Tu83}, in our proof we use the new technique of pro-groups introduced by the third-named author in~\cite{Vor1} (see also~\cite{LSV20}). 

The assertion of Corollary~\ref{cor:motivic-pi1} is analogous to Stavrova's computation of $\pi_0$ of an isotropic reductive group, see~\cite[Theorem~5.5]{Sta20}. The analogue of Corollary~\ref{cor:motivic-pi1} in the broader context of reductive groups of isotropic rank $\geq 2$ (albeit only when $R = k$ is an infinite field) has also been obtained in~\cite{VW16}.
The representability of the stable $\K$-functors in $\mathscr{H}_\bullet(k)$ is well-known, see e.\,g. \cite[Corollary~3.4]{Ho05}, \cite[\S~4.3]{MV99}.

Notice also that the Nisnevich sheaf $\bm{\pi}_1^{\mathbb{A}^1}(G)$ on the category of $k$-smooth quasi-projective schemes $\catname{Sm}_k$ associated to the presheaf of groups $U \mapsto \pi_1^{\mathbb{A}^1}(G)(U)$, $U\in\catname{Sm}_k$ has been recently computed by F.~Morel and A.~Sawant for all split reductive $G$ without any assumptions on the rank of $G$. In the simply-connected case and under the assumptions on $\Phi$ from~\cref{theorem:LP-for-K2} this sheaf coincides with the unramified sheaf $\mathbf{K}_2^\mathrm{M}$ of Milnor $\K_2$-groups see~\cite[Theorem~1]{MS20}. 

In addition to~\cref{theorem:LP-for-K2} the proof of Corollary~\ref{cor:motivic-pi1} relies on the computation of the fundamental group of the simplicial group $\St(\Phi, R[\Delta^\bullet])$, see Proposition~\ref{prop:pi1-StDelta}. This computation can be considered as a more precise version of the calculation appearing in the proof of~\cite[Proposition~3.2]{VW16} in the setting of Chevalley groups.

\section{General formalism}
Throughout this section we denote by $\catname{Grp}$ the category of groups. For a commutative ring $k$ we denote by $\catname{Alg}_k$ the category of Noetherian commutative unital $k$-algebras. In the statement of~\cref{lpb}, which is the main result of this section, $k$ will be assumed to be a field. However, some of the intermediate steps in the proof of~\cref{lpb} can be obtained for $k=\ZZ$, so for the sake of greater generality we make no blanket assumption that $k$ is a field.  

For a commutative ring $R$ and an element $a\in R$ we denote by $\lambda_a$ the homomorphism of principal localization $R \to R_a$. Similarly, we denote by $\lambda_P$ the homomorphism $R \to R_P = (R\setminus P)^{-1}R$ of localization in a prime ideal $P \trianglelefteq R$. Notice that if $a$ is nilpotent then $R_a$ is the zero ring (which we also consider as an object of $\catname{Alg}_k$). 

Let $A$ be an $R$-algebra and $a \in A$. We denote by $ev_t(a)$ (or just $ev(a)$ when $t$ is clear from the context) the unique $R$-algebra homomorphism $R[t] \to A$ mapping $t$ to $a$. For a functor $K \colon \catname{Alg}_k \to \catname{Grp}$ and an element $g \in K(R[t])$ we often shorten the notation for the element $K(ev(a))(g)$ to just $g(a)$.

\begin{definition}\label{df:NK}
Let $K$ be a functor $\catname{Alg}_k \to \catname{Grp}$.
Denote by $\mathrm NK(R)$ the kernel of the homomorphism $K(R[t]) \to K(R)$ induced by the homomorphism of evaluation at $t=0$. It is clear that $K(R[t]) \cong \mathrm NK(R) \rtimes K(R)$ and $\mathrm NK$ is also a functor $\catname{Alg}_k \to \catname{Grp}$. If the group $K(R[t])$ happens to be abelian then $K(R)$ is also a normal subgroup of $K(R[t])$ and $K(R[t]) \cong \mathrm NK(R) \oplus K(R)$. Thus, our definition agrees with~\cite[Def.~III.3.3]{Kbook} if $K$ takes values in the subcategory $\catname{Ab}$ of abelian groups.
\end{definition}

For a commutative ring $R$ and $a \in R$ consider the following commutative square:
  \begin{equation} \label{M-sq} \begin{tikzcd} R \ltimes t R_a[t] \ar{r}{l} \ar{d}[swap]{e} & R_a[t] \ar{d}{ev(0)} \\ R \ar{r}{\lambda_a} & R_a. \end{tikzcd}\end{equation}
  The ring $R \ltimes tR_a[t]$ can be defined either via the semidirect product construction (see e.\,g. \cite[Definition~3.2]{S15}) or as the pullback of the diagram  $R \xrightarrow{\lambda_a} R_a \xleftarrow{ev(0)} R_a[t]$.
  It is also clear that~\eqref{M-sq} is a Milnor square, see~\cite[Example~I.2.6]{Kbook}.

Now suppose that $k$ is a field. Recall that a $k$-algebra $R$ is called {\it geometrically regular} over $k$, if for any finite field extension $E/k$ the ring $R\otimes_kE$ is regular (cf.~\cite[p.~137]{Sw98}). In particular, a geometrically regular algebra is Noetherian. If $k$ is perfect then a $k$-algebra $R$ is geometrically regular over $k$ if and only if it is regular.

The following result gives a sufficient condition for a functor $K$ to be $\mathbb{A}^1$-invariant, cf. e.\,g. with~\cite[Proposition~2.2]{AHW20}.
\begin{theorem} \label{lpb}
 Let $k$ be a field.
 Suppose that a functor $K \colon \catname{Alg}_k \to \catname{Grp}$ satisfies the following axioms:
 \begin{enumerate}[label=\textnormal{(A\arabic*)}]
  \item \label{CFC} {\it $K$ is finitary}, i.e. commutes with filtered colimits. In other words if $A_i$ are Noetherian $k$-algebras and their colimit $A$ is also a Noetherian algebra, then $K(A) = \colim_i(K(A_i))$.
  \item \label{DP} For a $k$-algebra $R$ and $a \in R$ consider the diagram obtained from~\eqref{M-sq} by applying $K$. Then the homomorphism $\Ker\left(K(e)\right) \to \mathrm NK(R_a)$ between the kernels of vertical arrows is injective.
  \item \label{LPP} {\it $K$ satisfies weak affine Nisnevich excision for domains.} By this we mean the following. Let $\iota \colon B \hookrightarrow A$ be an injective homomorphism of domains contained in $\catname{Alg}_k$ such that $\mathrm{Spec}(A)\to\mathrm{Spec}(B)$ is an {\'e}tale morphism of affine schemes. Let $h$ be an element of $B$ not invertible in $A$ such that $\iota$ induces an isomorphism $B / hB \cong A / \iota(h)A$. Consider the following commutative square: \[\begin{tikzcd} B \ar{r}{\iota} \ar{d}{\lambda_h} & A \ar{d}{\lambda_{\iota(h)}}\\ B_h \ar{r}{\overline{\iota}} & A_h \end{tikzcd}\] Then the natural homomorphism \[\Ker(K(B) \to K(B_h)) \to \Ker(K(A) \to K(A_h))\] induced by $\iota$ is surjective.
  \item \label{PGP} {\it $K$ satisfies $\mathbb{P}^1$-glueing property for local domains.} By definition, this means that for every local domain $R \in \catname{Alg}_k$ the following diagram whose arrows are induced by natural embeddings is a pullback square: \begin{equation}\label{eq:P1-square} \begin{tikzcd} K(R) \ar[r] \ar[d] & K(R[t]) \arrow{d} \\ K(R[t^{-1}]) \ar{r} & K(R[t, t^{-1}]). \end{tikzcd} \end{equation}  
  It is easy to see that if~\eqref{eq:P1-square} is pullback, then all its arrows are injective.
  \item \label{HIF} {\it $K$ is homotopy invariant for fields}, i.e. $\mathrm NK$ vanishes on every field $F \in \catname{Alg}_k$.
 \end{enumerate}
 Then $\mathrm NK$ vanishes on every geometrically regular $R\in \catname{Alg}_k$. In other words, for every such $R$ the natural embedding $R \hookrightarrow R[t]$ induces an isomorphism $K(R)\cong K(R[t]).$
\end{theorem}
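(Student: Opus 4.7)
The plan is to follow the standard blueprint for $\mathbb{A}^1$-invariance theorems in $\K$-theory: first reduce to the case of a local essentially smooth $k$-algebra via Popescu's theorem and a Quillen-style patching argument, and then conclude by invoking Panin's geometric presentation theorem \cite[Theorem~2.5]{Pa19} together with the remaining axioms. Throughout, axiom \ref{CFC} permits every filtered-colimit passage.

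\textbf{Reduction to the local essentially smooth case.} By \ref{CFC} and Popescu's theorem, every geometrically regular $R \in \catname{Alg}_k$ is a filtered colimit of smooth finitely generated $k$-algebras, so it suffices to establish $\mathrm NK = 0$ on the latter. Given such $R$ and a class $g \in \mathrm NK(R)$, one defines the trivializing locus
\[ I(g) := \{a \in R : \text{the image of $g$ in } K(R_a[t]) \text{ is trivial}\}. \]
Axiom \ref{DP} applied to the Milnor square~\eqref{M-sq} is precisely the tool needed to show that $I(g)$ is an ideal of $R$: the injectivity of $\Ker(K(e)) \to \mathrm NK(R_a)$ provides the non-abelian analogue of Karoubi-type patching and lets one combine trivializations on $R_a$ and $R_b$ into a single trivialization whenever $(a,b)$ is a unimodular pair. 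Hence $I(g) = R$ reduces to $I(g) \not\subseteq \mathfrak m$ for every maximal $\mathfrak m \subset R$, which in turn reduces via \ref{CFC} to establishing $\mathrm NK(R_\mathfrak m) = 0$ for every local essentially smooth $k$-algebra $R_\mathfrak m$.

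\textbf{Conclusion via Panin's presentation.} Let $(R, \mathfrak m)$ now be a local essentially smooth $k$-algebra with residue field $F$, and fix $g \in \mathrm NK(R)$. After shrinking, Panin's Theorem 2.5 in \cite{Pa19} should supply a weak affine Nisnevich square
\[
\begin{tikzcd} B \ar{r}{\iota} \ar{d}{\lambda_h} & A \ar{d}{\lambda_{\iota(h)}} \\ B_h \ar{r}{\overline{\iota}} & A_h \end{tikzcd}
\]
of the exact shape required by \ref{LPP}, in which $A$ is a principal localization of $R[t]$ containing the class of $g$ inside $\Ker(K(A) \to K(A_h))$, while $B$ is essentially of polynomial type over the residue field $F$. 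Axiom \ref{LPP} then lifts the class of $g$ to a preimage $\tilde g \in \Ker(K(B) \to K(B_h))$. Axiom \ref{HIF} applied to $F$ kills elements coming from $\mathrm N K(F)$, and axiom \ref{PGP} transports this vanishing to $K(B)$ via the $\mathbb P^1$-glueing square. Chasing back through the Nisnevich square recovers $g = 1$ in $K(R[t])$.

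\textbf{Main obstacle.} The critical technical step is matching Panin's output with the hypotheses of~\ref{LPP}--\ref{HIF}. Specifically, one must verify that the geometric presentation produces a square in which the Nisnevich condition $B/hB \cong A/\iota(h)A$ holds, $h$ is not invertible in $A$, and $B$ truly is of essentially polynomial type over $F$ so that \ref{PGP} and \ref{HIF} together exhaust it. A secondary subtlety is the patching step: because $K$ is not assumed to be abelian-group-valued, the classical projective-module glueing must be replaced by the injectivity statement in \ref{DP}, and one has to verify carefully that this injectivity really does suffice to establish the ideal property of $I(g)$ in the absence of additive structure on the values of $K$.
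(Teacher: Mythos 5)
Your first half — reducing via Popescu and \ref{CFC} to smooth $k$-algebras and then using \ref{DP} to run a Quillen-style patching argument showing that the trivializing locus of $g\in\mathrm NK(R)$ is an ideal — is exactly the paper's Lemma~\ref{LGP} (the Quillen--Suslin local-global principle), and the subtlety you flag about replacing additive glueing by the injectivity in \ref{DP} is resolved there by Tulenbaev's trick with the element $h(t,t_1,t_2)=g(t_1t)\cdot g((t_1+t_2)t)^{-1}$ together with the identification of $R\ltimes tR_a[t]$ as a filtered colimit of copies of $R[t]$. So far so good.

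The second half, however, has a genuine gap. In Panin's Theorem~\ref{paninthm} the top-left corner of the Nisnevich square is $B=A[t]$, a polynomial ring over the semi-local ring $A=R_{M_1,\dots,M_n}$ \emph{itself}, not anything ``of essentially polynomial type over the residue field $F$'', and $h$ is a \emph{monic} polynomial in $A[t]$. Consequently your proposed finish --- apply \ref{HIF} to the residue field and ``transport the vanishing via \ref{PGP}'' --- does not connect: the lifted class $\tilde g\in\Ker\bigl(K(A[t])\to K(A[t]_h)\bigr)$ supplied by \ref{LPP} is not an element of $\mathrm NK$ of a field, so \ref{HIF} cannot be applied to it, and \ref{PGP} by itself says nothing about such a kernel. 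The missing ingredient is the injectivity statement of Lemma~\ref{lmp}: for a domain $R$ and a monic $f\in R[t]$ the localization map $K(R[t])\to K(R[t]_f)$ is injective. This is proved by combining the Zariski-excision special case of \ref{LPP} (Lemma~\ref{zgl}) with the globalized $\mathbb P^1$-glueing (Lemma~\ref{ght}, which itself needs the local-global principle), exploiting that a monic $f$ becomes a unit times a polynomial in $t^{-1}$ congruent to $1$. It is this lemma that kills the lift $\tilde g$, and chasing back through Panin's diagram (evaluating at $t=0$) gives the injectivity of $K(A[x_1,\dots,x_m])\to K(E[x_1,\dots,x_m])$ into the \emph{fraction} field $E$ (Corollary~\ref{esssmooth}). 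Only at the very last step is \ref{HIF} invoked, and it is applied to $E=\mathrm{Frac}(R_M)$, not to the residue field: one compares $\mathrm NK(R_M)$ with $\mathrm NK(E)=1$ through the two horizontal injections $K(R_M[t])\hookrightarrow K(E[t])$ and $K(R_M)\hookrightarrow K(E)$. Your outline omits the monic-localization injectivity entirely and routes \ref{HIF} and \ref{PGP} to the wrong places, so as written the argument does not close.
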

\begin{rem}
 In the literature a functor satisfying the axiom~\ref{PGP} is sometimes called {\it acyclic} (cf.~\cite[Definition~III.4.1.1]{Kbook}).
The choice of the name for~\ref{LPP} is inspired by axiom (P3) from~\cite[Proposition~3.3.4]{AHW18}. Notice that if $B$ Noetherian of finite Krull dimension then the application of $\mathrm{Spec}$ to the commutative square from~\ref{LPP} produces a distinguished Nisnevich square in $\catname{Sm}/\mathrm{Spec}(B)$ in the sense of~\cite[Definition~3.1.3]{MV99}. Notice also that our axioms can be slightly relaxed, so that the assertion of the Theorem becomes slightly stronger, see Remark~\ref{rem:relax} below.
\end{rem}

The proof is based on a series of lemmas and is deferred until the end of this section. We start by deducing the following useful corollary of the axiom~\ref{DP}. 

Let $k$ be an arbitrary commutative ring and $K\colon \catname{Alg}_k \to \catname{Grp}$ be a functor.
We say that $K$ satisfies the {\it Quillen--Suslin local-global principle} if for every $R\in \catname{Alg}_k$ the following map is injective:
\begin{equation} \label{QS-def} \begin{tikzcd} \mathrm NK(R) \ar{r}{\prod \lambda_M} & \prod\limits_{M \in \Max(R)} \mathrm NK(R_M). \end{tikzcd} \end{equation}
\begin{lemma}\label{LGP}
Suppose that $K$ is a finitary functor satisfying~\ref{DP}. Then $K$ satisfies the Quillen--Suslin local-global principle. In particular, $\mathrm NK$ is a separated presheaf in the Zariski topology.
\end{lemma}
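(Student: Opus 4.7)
The strategy combines the finitary hypothesis with a patching argument driven by~\ref{DP}. Let $g \in \mathrm NK(R)$ lie in the kernel of $\prod_M \lambda_M$, i.e.\ $g$ becomes trivial in every $\mathrm NK(R_M)$; the goal is to prove $g = 1$. Since $R_M = \colim_{s\notin M} R_s$ is a filtered colimit of Noetherian $k$-algebras, axiom~\ref{CFC} gives $\mathrm NK(R_M) = \colim_{s\notin M} \mathrm NK(R_s)$, so for each $M$ there exists $s(M)\notin M$ with $\lambda_{s(M)}(g) = 1$ in $\mathrm NK(R_{s(M)})$. The set $\{s(M)\}_M$ escapes every maximal ideal and so generates the unit ideal; a standard compactness argument extracts finitely many $s_1,\ldots,s_n$ with $(s_1,\ldots,s_n) = R$ and $\lambda_{s_i}(g) = 1$ for each $i$. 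It therefore remains to prove: if $s_1,\ldots,s_n$ generate the unit ideal and $g$ becomes trivial in each $\mathrm NK(R_{s_i})$, then $g = 1$.

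The patching step exploits~\ref{DP} as follows. For any $a \in R$ consider the ring homomorphism $\psi_a \colon R[t] \to R \ltimes tR_a[t]$ defined by $f(t) \mapsto (f(0), \lambda_a(f))$, which is well defined because $\lambda_a(f(0))$ equals the constant term of $\lambda_a(f) \in R_a[t]$. By construction $e \circ \psi_a$ is the evaluation $\mathrm{ev}(0)$ on $R[t]$ and $l \circ \psi_a$ is the induced localization $R[t] \to R_a[t]$; so for $g \in \mathrm NK(R)$ the element $K(\psi_a)(g)$ lies in $\Ker(K(e))$ and maps under $K(l)$ to $\lambda_a(g) \in \mathrm NK(R_a)$. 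Whenever $\lambda_a(g) = 1$, axiom~\ref{DP} then forces $K(\psi_a)(g) = 1$. Applying this with $a = s_i$ for each $i$ kills $g$ in each of the pullback rings $R \ltimes tR_{s_i}[t]$. One then glues these trivialities using the Milnor squares~\eqref{M-sq} associated to the various $s_i$ together with a partition-of-unity relation $\sum c_i s_i = 1$; iterating~\ref{DP} and~\ref{CFC}, one concludes $g = 1$ in $K(R[t])$, which is the required statement.

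The ``in particular'' assertion on Zariski separation then follows formally, since any principal Zariski cover refines the cover by localizations at all maximal ideals. The main obstacle is expected to be the patching step itself: organizing the interplay between~\ref{DP} for the different elements $s_i$ and the relation $\sum c_i s_i = 1$ so that the local trivialities $K(\psi_{s_i})(g) = 1$ really assemble into a global triviality of $g$ in $K(R[t])$. The finitary reduction and the consequent Zariski-separation corollary are, by contrast, formal manipulations with colimits and refinements.
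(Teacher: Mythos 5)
Your reduction to finitely many $s_1,\dots,s_n$ generating the unit ideal, and your application of~\ref{DP} to the square~\eqref{M-sq} to conclude $K(\psi_{s_i})(g)=1$ in $K(R\ltimes tR_{s_i}[t])$ (hence, via~\ref{CFC} and the presentation of $R\ltimes tR_{s_i}[t]$ as a filtered colimit of copies of $R[t]$ along $ev(s_i^{j-i}t)$, that $g(s_i^{n_i}t)=1$ for some $n_i$), are both correct and agree with the first step of the paper's proof. But the step you yourself flag as "the main obstacle" --- assembling the local trivialities into $g=1$ --- is precisely the non-formal content of the lemma, and your proposed mechanism for it does not work. There is no gluing available from the Milnor squares~\eqref{M-sq}: axiom~\ref{DP} is only an \emph{injectivity} statement about one map of kernels, $K$ is not assumed to be a sheaf (separatedness is the conclusion, not a hypothesis), and knowing $g(s_i^{n_i}t)=1$ for each $i$ together with $\sum c_is_i=1$ does not formally yield $g(t)=1$.

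The missing idea is the Quillen--Suslin/Tulenbaev splitting trick. For two coprime elements $a,b$ with $\lambda_a(g)=\lambda_b(g)=1$, one introduces auxiliary variables and considers
$h(t,t_1,t_2)=g(t_1t)\cdot g\bigl((t_1+t_2)t\bigr)^{-1}\in K(R[t,t_1][t_2])$,
which lies in $\mathrm NK(R[t,t_1])$ with respect to $t_2$ and dies after localizing at $a$ and at $b$. Applying your (correct) first step to $h$ in the variable $t_2$ gives $n,m$ with $h(t,t_1,a^nt_2)=h(t,t_1,b^mt_2)=1$, and then, writing $xa^n+yb^m=1$, the telescoping substitution
$1=h(t,1,-yb^m)\cdot h(t,xa^n,-xa^n)=g(t)\cdot g(xa^nt)^{-1}\cdot g(xa^nt)\cdot g(0)^{-1}=g(t)$
kills $g$. (The general case then follows not by a partition of unity over $n$ terms directly, but by showing that $Q(g)=\{c\in R: K(\lambda_c)(g)=1\}$ is an ideal using the two-element case over the localizations $R_c$, and that it cannot be proper by~\ref{CFC}.) Without this interpolation argument your proof does not close.
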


\begin{proof}
 Fix a $k$-algebra $R$ and $a \in R$. 
 Consider the following diagram of $k$-algebras. Its objects $A_i$ are copies of $R[t]$ indexed by natural numbers $i$. The only arrows of this diagram are the homomorphisms $ev (a^{j-i}t)\colon A_i \to A_j$ defined for $1 \leq i \leq j$. It is clear that this diagram is filtered and its colimit is $R \ltimes tR_a[t]$ (cf. e.\,g.~\cite[Lemma~15]{S15}). 
 
 The first step of the proof of the lemma is to show that for every $g \in \Ker(\mathrm NK(R) \to \mathrm NK(R_a))$ there exists some natural $n$ such that $g(a^nt)$ is the trivial element of $K(R[t])$. Let $g$ be such an element. From~\ref{DP} we obtain that the image of $g$ in $K(R \ltimes tR_a[t])$ is trivial. The required assertion now follows from~\ref{CFC} and the previous paragraph.
 
 The next step of the proof is to verify that for any coprime elements $a, b \in R$ the homomorphism $\langle K(\lambda_a), K(\lambda_b) \rangle \colon \mathrm NK(R) \to \mathrm NK(R_a) \times \mathrm NK(R_b)$ is injective. Fix $g \in \Ker(\langle K(\lambda_a), K(\lambda_b) \rangle)$. We argue as in the proof of~\cite[Lemma~2.5]{Tu83}. Set $S := R[t, t_1]$. Consider the element $h(t, t_1, t_2) = g(t_1 t) \cdot g((t_1 + t_2)t)^{-1}\in K(S[t_2]).$ Clearly, $h$ lies in the kernel of the homomorphism $K(ev_{t_2}(0))$. Since evaluation commutes with localization, we obtain that the element $K(\lambda_a)(h) \in K(S_a[t_2])$ is trivial, therefore by the previous paragraph there exists $n$ such that $h(t, t_1, a^nt_2)$ is trivial in $K(S[t_2])$. Similarly, we find $m$ such that $h(t, t_1, b^m t_2)$ is trivial. Since $a^n$ and $b^m$ are still coprime, we can find $x, y \in R$ such that $xa^n + yb^m = 1$. The required assertion now follows from the following calculation:
 $$1 = h(t, 1, -yb^m) \cdot h(t, xa^n, -xa^n) = g(t)\cdot g(xa^n\cdot t)^{-1} \cdot g(xa^n\cdot t) \cdot g(0)^{-1} = g(t).$$
 
 Now we can finish the proof of the lemma. We argue as in the proof of~\cite[Theorem~2]{S15}. Let $g$ be an element of the kernel of~\eqref{QS-def}. Denote by $Q(g)$ the set consisting of all elements $c \in R$ for which $K(\lambda_c)(g)$ is trivial. It is clear that $Q(g)$ is closed with respect to multiplication by elements of $R$. Let us check that this set is, in fact, an ideal. Fix $a, b \in Q(g)$ and let $c$ be an element of the ideal $\langle a, b \rangle$. Notice that $\overline{a} = \lambda_c(a)$, $\overline{b} = \lambda_c(b)$ are coprime elements of $R_c$. From the identities $\lambda_{\overline{a}}\lambda_c = \lambda_{\lambda_a(c)}\lambda_a$ and $\lambda_{\overline{b}}\lambda_c = \lambda_{\lambda_b(c)}\lambda_b$ we obtain that $g' = K(\lambda_c)(g)$ lies in the kernel of $\langle K(\lambda_{\overline{a}}), K(\lambda_{\overline{b}}) \rangle.$ By the previous paragraph, we obtain that $g' = 1$ and hence that $c \in Q(g)$. We have shown that $Q(g)$ is an ideal of $R$. If $Q(g)$ is proper then it is contained in a maximal ideal $M \trianglelefteq R$, in which case from $K(\lambda_M)(g) = 1$ and~\ref{CFC} we obtain that $K(\lambda_s)(g) = 1$ for some $s \in R \setminus M$. Thus, we obtain a contradiction, so $Q(g) = R$ and $g = 1$.
\end{proof}

First of all, notice that the local-global principle allows one to obtain the following global version of~\ref{PGP} (cf. e.\,g. with the proof of~\cite[Theorem~1]{LS20}). 
\begin{lemma}[$\mathbb{P}^1$-glueing property for domains] \label{ght}
Suppose that $K$ satisfies the Quillen--Suslin local-global principle and the $\mathbb{P}^1$-glueing property for local domains. Then for an arbitrary domain $R \in \catname{Alg}_k$ the square~\eqref{eq:P1-square} is a pullback square. In particular, all its arrows are injective. \end{lemma}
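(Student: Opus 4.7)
The plan is to bootstrap the pullback property from local domains to arbitrary domains, using the Quillen--Suslin local-global principle of Lemma~\ref{LGP}. First observe that each of the canonical maps $K(R) \to K(R[t])$, $K(R) \to K(R[t^{-1}])$ and $K(R) \to K(R[t, t^{-1}])$ is split injective via evaluation at $t = 0$, $t^{-1} = 0$ and $t = 1$ respectively, and the same holds at every localization $R_M$ at a maximal ideal $M$, along with the full pullback conclusion of~\ref{PGP} for the local domain $R_M$.

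Let $(a, b) \in K(R[t]) \times_{K(R[t,t^{-1}])} K(R[t^{-1}])$ be an element of the pullback, let $c_0 \in K(R)$ be the image of $a$ under evaluation at $t = 0$, and $c_\infty \in K(R)$ the image of $b$ under evaluation at $t^{-1} = 0$. For every maximal ideal $M$, axiom~\ref{PGP} applied to $R_M$ yields a unique element $c^M \in K(R_M)$ whose images in $K(R_M[t])$ and $K(R_M[t^{-1}])$ are $a_M$ and $b_M$ respectively; applying the two evaluations we deduce $c^M = (c_0)_M = (c_\infty)_M$. In particular the image of $(c_0)_M$ in $K(R_M[t])$ is $a_M$, so the element $a \cdot c_0^{-1}$, which by construction lies in $\mathrm NK(R)$, localizes to the identity at every maximal $M$; by Lemma~\ref{LGP} it must equal $1$, i.e. $a = c_0$ in $K(R[t])$. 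The analogous argument on the other chart, run after the renaming $s := t^{-1}$ so that Lemma~\ref{LGP} applies to the corresponding $\mathrm NK$, yields $b = c_\infty$ in $K(R[t^{-1}])$.

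The matching condition $\bar a = \bar b$ in $K(R[t, t^{-1}])$ now reads as the equality of the images of $c_0$ and $c_\infty$, and the injectivity of $K(R) \to K(R[t, t^{-1}])$ forces $c_0 = c_\infty$; this common value realizes $(a, b)$ as coming from $K(R)$, with uniqueness immediate from the retraction at $t = 0$. For the horizontal arrows of~\eqref{eq:P1-square}, if $a \in K(R[t])$ maps to $1$ in $K(R[t, t^{-1}])$ then $(a, 1)$ lies in the pullback, so the surjectivity just established produces $c \in K(R)$ mapping to $a$ and to $1$; the second coordinate forces $c = 1$ and hence $a = 1$, and the symmetric argument handles $K(R[t^{-1}]) \to K(R[t, t^{-1}])$.

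The main obstacle I foresee is the reconciliation of the two charts: one cannot simply fix $c := a(0)$ at the outset and verify simultaneously that it maps to $b$, because the equality $a(0) = b(0)$ of the two boundary values is itself part of what must be proved. The cleanest route, as outlined, is to treat $c_0$ and $c_\infty$ as a priori distinct elements of $K(R)$, prove independently by Lemma~\ref{LGP} that $a = c_0$ and $b = c_\infty$ in their respective groups, and only then invoke the injectivity of $K(R) \to K(R[t, t^{-1}])$ to identify them.
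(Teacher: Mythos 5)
Your proof is correct and follows essentially the same route as the paper's: normalize the two components by their boundary values $a(0)$ and $b(\infty)$, use the local pullback property at each $R_M$ together with the Quillen--Suslin local-global principle to show the normalized elements of $\mathrm NK$ are trivial, and then conclude via the (split) injectivity of $K(R)\to K(R[t,t^{-1}])$. The only cosmetic difference is that the paper normalizes both charts by the single element $g(0)$, whereas you keep $c_0$ and $c_\infty$ separate until the end; the substance is identical.
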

\begin{proof}
Let $R \in \catname{Alg}_k$ be an arbitrary domain. Suppose that the images of $g \in K(R[t])$ and $h \in K(R[t^{-1}])$ coincide in $K(R[t, t^{-1}])$. Then so do the images of $g' = g \cdot g(0)^{-1}$ and $h' = h \cdot g(0)^{-1}$. For a maximal ideal $M\trianglelefteq R$ set $g'_M := K(\lambda_M)(g') \in K(R_M[t])$ and $h'_M := K(\lambda_M)(h') \in K(R_M[t^{-1}]))$. Clearly, their images in $K(R_M[t, t^{-1}])$ coincide. By our assumptions, the square~\eqref{eq:P1-square} is pullback for $R_M$, therefore $g'_M = 1$ for all $M$. Since $g' \in \mathrm NK(R)$, by the local-global principle we obtain that $g' = 1$, hence $g$ is the image of some element of $K(R)$. By symmetry, $h$ also is the image of some element of $K(R)$. Since $K(R)\to K(R[t, t^{-1}])$ is injective, we conclude that $g$ and $h$ are the images of the same element of $K(R)$, which completes the proof.  \end{proof}

Let us also note the following useful particular special case of weak affine Nisnevich excision.
\begin{lemma}[weak affine Zariski excision for domains]
	\label{zgl} Suppose that a functor $K\colon \catname{Alg}_k \to \catname{Grp}$ satisfies~\ref{LPP}. Let $R \in \catname{Alg}_k$ be a domain and $a, b$ be a pair of coprime elements of $R$. Consider the diagram
$$\begin{tikzcd}
	R \ar{r}{\lambda_a} \ar{d}[swap]{\lambda_b} & R_a \ar{d}{\overline{\lambda_b}}\\
	R_b \ar{r}{\overline{\lambda_a}} & R_{ab}.
\end{tikzcd}$$
	Then the natural map $\Ker(K(\lambda_b)) \to \Ker(K(\overline{\lambda_b}))$ induced by $\lambda_a$ is surjective.
\end{lemma}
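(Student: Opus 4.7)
The plan is to derive this lemma as a direct specialization of axiom~\ref{LPP}, taking $\iota = \lambda_a\colon R\hookrightarrow R_a$ and $h = b$. Before invoking~\ref{LPP} I would dispose of two degenerate situations. First, if $a=0$ then coprimality of $a$ and $b$ forces $b$ to be a unit in $R$, so $\lambda_b$ is an isomorphism and $\Ker(K(\lambda_b))=1$; the claim is vacuous. Second, if $b$ happens to be invertible in $R_a$, then $R_{ab}=R_a$ and $\overline{\lambda_b}$ is an isomorphism, so $\Ker(K(\overline{\lambda_b}))=1$ and the surjectivity claim is again vacuous. In the remaining case $a\ne 0$ and $b$ not invertible in $R_a$, both $R$ and $R_a$ are domains sitting inside $\mathrm{Frac}(R)$, both are Noetherian $k$-algebras, and $\lambda_a$ is an injective ring homomorphism.

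Next I would verify the hypotheses of~\ref{LPP} for this choice. The morphism $\mathrm{Spec}(R_a)\to\mathrm{Spec}(R)$ is an open immersion, hence étale. The only nontrivial point is the isomorphism $R/bR\cong R_a/bR_a$ induced by $\lambda_a$. This is where the coprimality hypothesis is used: from $xa+yb=1$ in $R$ one sees that the image of $a$ in $R/bR$ is a unit with inverse $x\bmod bR$, so the canonical map $R/bR\to (R/bR)[a^{-1}]$ is an isomorphism. Since localization commutes with the quotient by $bR$, the target of this map is precisely $R_a/bR_a$, and the induced map from $R/bR$ is the one in the statement of~\ref{LPP}.

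With these checks in hand, axiom~\ref{LPP} applies and yields that the homomorphism
\[
\Ker\bigl(K(R)\to K(R_b)\bigr)\ \longrightarrow\ \Ker\bigl(K(R_a)\to K((R_a)_b)\bigr)
\]
induced by $\lambda_a$ is surjective. Identifying $(R_a)_b$ with $R_{ab}$ and the map $R_a\to(R_a)_b$ with $\overline{\lambda_b}$ gives exactly the assertion of the lemma.

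The only step requiring genuine thought is the verification of the quotient condition $R/bR\cong R_a/bR_a$, and this is immediate from Bezout's identity. Everything else is formal, so I do not expect any real obstacle; the lemma is essentially just~\ref{LPP} applied to the open immersion $\mathrm{Spec}(R_a)\hookrightarrow\mathrm{Spec}(R)$, whose complement is controlled by the principal divisor of $b$ in the Zariski setting thanks to coprimality.
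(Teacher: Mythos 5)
Your proposal is correct and follows the same route as the paper: specialize axiom~\ref{LPP} to $B=R$, $A=R_a$, $h=b$, with the only substantive point being the isomorphism $R/bR\cong R_a/bR_a$. The paper verifies that isomorphism by explicit element manipulations with $xa^s+yb^s=1$, whereas you observe that $a$ becomes a unit in $R/bR$ so the localization map is already an isomorphism — a slightly cleaner packaging of the same Bezout input — and your treatment of the degenerate cases ($a=0$, or $b$ invertible in $R_a$) is a welcome extra care that the paper leaves implicit.
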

\begin{proof}
	Set $B=R$, $A=R_a$ and $h=b$. It is clear that $\mathrm{Spec}(A)\to\mathrm{Spec}(B)$ is an {\'e}tale morphism. To see that the present situation is a special case of~\ref{LPP} it is enough to  check that the map $j\colon B/hB \to A/hA$ is an isomorphism. Let us first verify the surjectivity of $j$, or what is the same, the inclusion $A \subseteq Ah+B$. Fix an element $r/a^s\in A=R_a$, we need to show that it lies in $Ah+B$. We may assume that $s\geq 1$, so that $a^s$ and $b^s$ are still coprime. Choose $x, y \in R$ such that $xa^s+yb^s=1$. Thus, $r/a^s$ can be decomposed into the sum of $rx\in B$ and $ry(b/a)^s\in bR_a=hA$.

	Now let us verify the injectivity of $j$, i.e. the inclusion $Ah\cap B \subseteq Bh$. Suppose that $rb/a^s=c\in B=R$. We may assume $s\geq 1$ otherwise there is nothing to prove. From $rb=a^sc$ and $xa^s+yb^s=1$ we obtain the required inclusion $c=cxa^s+cyb^s=xrb+cyb^s=(xr+cyb^{s-1})b \in Rb = Bh$. 
\end{proof}

The following injectivity result is inspired by~\cite[Corollary~5.2]{Tu83}.
\begin{lemma} \label{lmp}
Suppose that $K$ satisfies weak affine Zariski excision and $\mathbb{P}^1$-glueing properties for domains. Then for any domain $R\in \catname{Alg}_k$ and any monic polynomial $f\in R[t]$ the localization homomorphism $\lambda_f\colon R[t]\rightarrow R[t]_f$ induces an injection $K(R[t])\hookrightarrow K(R[t]_f).$ \end{lemma}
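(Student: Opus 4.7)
The idea is to exploit monicity of $f$ through the substitution $s := t^{-1}$. Writing $f(t) = t^d + a_{d-1}t^{d-1} + \ldots + a_0$, I introduce the ``reciprocal'' polynomial
\[
 \tilde f(s) := s^d f(s^{-1}) = 1 + a_{d-1} s + \ldots + a_0 s^d \in R[s].
\]
It has two features that will drive the whole argument. First, $\tilde f(0) = 1$, so $s$ and $\tilde f$ are coprime in $R[s]$, and the evaluation $\eval_0\colon s \mapsto 0$ extends to a ring retraction $R[s]_{\tilde f} \to R$ of the natural inclusion $R \hookrightarrow R[s]_{\tilde f}$. Second, under the identification $R[t, t^{-1}] = R[s, s^{-1}]$ the elements $f$ and $\tilde f$ differ by the unit $t^{-d} = s^{d}$, so $R[s, s^{-1}]_{\tilde f} = R[t, t^{-1}]_f = R[t]_f[t^{-1}]$.

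Given $g \in \Ker\bigl(K(\lambda_f)\colon K(R[t]) \to K(R[t]_f)\bigr)$, I would first push $g$ along $R[t] \hookrightarrow R[t, t^{-1}]$; by commutativity its image is killed in $K(R[s, s^{-1}]_{\tilde f})$. Then I would apply weak affine Zariski excision (\cref{zgl}) to the domain $R[t^{-1}] = R[s]$ with the coprime pair $(s, \tilde f)$ to produce an $h \in K(R[t^{-1}])$ which lies in $\Ker\bigl(K(R[t^{-1}]) \to K(R[t^{-1}]_{\tilde f})\bigr)$ and has the same image as $g$ in $K(R[t, t^{-1}])$. At this point the compatible pair $(g, h)$ fits the $\mathbb{P}^1$-glueing pullback of \cref{ght} for the domain $R$ and variable $t$, and so lifts to a unique $g_0 \in K(R)$ whose images in $K(R[t])$ and $K(R[t^{-1}])$ are $g$ and $h$ respectively; informally, $g$ ``is already constant in $t$''.

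It will then remain to show $g_0 = 1$, which is where the second feature of $\tilde f$ comes in: functoriality of $K$ applied to the retraction $\eval_0\colon R[s]_{\tilde f} \to R$ makes $K(R) \hookrightarrow K(R[s]_{\tilde f})$ split injective. Since $g_0$ maps to $h$ in $K(R[t^{-1}])$ and then to $1$ in $K(R[t^{-1}]_{\tilde f}) = K(R[s]_{\tilde f})$, we conclude $g_0 = 1$ and therefore $g = 1$. The main creative step in this plan is the substitution $s = t^{-1}$, which simultaneously converts monicity of $f$ into the coprimality needed for \cref{zgl} and produces the $R$-rational retraction needed at the final stage; once that substitution is in place, the proof is just the obvious gluing of the two inputs \cref{zgl} and \cref{ght}, so I do not expect any additional technical obstacle.
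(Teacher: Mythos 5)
Your proposal is correct and is essentially the paper's own proof: the reciprocal polynomial $\tilde f(s)=s^df(s^{-1})$ is exactly the element $g\in R[t^{-1}]$ used there, and the three steps — \cref{zgl} applied to the coprime pair $(s,\tilde f)$ in $R[s]=R[t^{-1}]$, the $\mathbb{P}^1$-glueing of \cref{ght} to descend to $K(R)$, and the retraction $\eval_{s}(0)\colon R[s]_{\tilde f}\to R$ to kill the descended class — all match the paper's argument.
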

\begin{proof}
	Fix a presentation $f=\sum_{i=0}^n a_it^i$, in which $a_n=1$. Set $$g=1+a_{n-1}t^{-1}+\ldots+a_0t^{-n}\in R[t^{-1}].$$ It is clear that $R[t, t^{-1}]_f \cong R[t, t^{-1}]_g$. Consider the following commutative diagram:
$$\begin{tikzcd}
	R[t] \ar{r}{\lambda_t} \ar{d}{\lambda_f} & R[t, t^{-1}] \ar{d}{\overline{\lambda}} & R[t^{-1}] \ar{l}[']{\lambda_{t^{-1}}} \ar{d}{\lambda_g}\\
	R[t]_f \ar{r} & R[t, t^{-1}]_f & R[t^{-1}]_g. \ar{l}
\end{tikzcd}$$
Let $x$ be an element of $\Ker(K(\lambda_f))$. Then $K(\lambda_t)(x)$ lies in $\Ker(K(\overline{\lambda}))$ so by Lemma~\ref{zgl} applied to the right square we find $y \in \Ker(K(\lambda_{g}))$ such that $K(\lambda_t)(x) = K(\lambda_{t^{-1}})(y)$. Now by Lemma~\ref{ght} $x$ and $y$ are images of some $z \in K(R)$. 
Since the composite homomorphism $K(R) \to K(R[t^{-1}]_g) \xrightarrow{K(ev_{t^{-1}}(0))} K(R)$ coincides with the identity map, we conclude that $z=1$, consequently $x = y = 1$ and the proof is complete. \end{proof}

For the rest of this section we assume that $k$ is a field.
Recall that a $k$-algebra $R$ is called {\it essentially smooth} if it is geometrically regular and essentially of finite type over $k$. 
Equivalently, $R$ is essentially smooth over $k$ if it is essentially of finite type over $k$ and $R\otimes_k\overline k$ is regular (see e.\,g.~\cite[p.~137]{Sw98}). 
If, moreover, $R$ is of finite type over $k$, it is called {\it smooth over $k$}.

\begin{theorem}[Panin]\label{thm:Panin}
\label{paninthm}
	Let $k$ be a field, $R$ be a domain smooth over $k$. Let $M_1,\ldots, M_n$ be a finite set of maximal ideals of $R$. Denote by $A=R_{M_1,\ldots,M_n}$ the corresponding semi-localization of $R$. Let $f$ be an element of the intersection  $\cap_{i=1}^nM_i \subseteq R$. Then there exist a monic polynomial $h(t)\in A[t]$, a domain $S$ essentially smooth over $k$ and homomorphisms $\tau$, $p$, $p'$ and $\delta$ such that $\tau^*\colon\mathrm{Spec}(S)\to\mathrm{Spec}(A[t])$ is an {\'e}tale morphism and the following diagram commutes:
\begin{equation}\label{eq:panin-diag}
 \begin{tikzcd}[column sep=4em]
   & A & \\ A[t] \ar{d}{\lambda_h} \ar{ru}{ev(0)} \ar{r}{\tau} & S \ar{u}{\delta} \ar{d}{\lambda_{\tau(h)}}  & R \ar{ul}[swap]{\lambda_{M_1,\ldots,M_n}} \ar{d}{\lambda_f} \ar{l}[swap]{p} \\
   A[t]_h \ar{r}[swap]{\tau\otimes_{A[t]}A[t]_h}              & S_{\tau(h)}      & R_f. \ar{l}{p'}\end{tikzcd}
\end{equation}
Additionally, the homomorphism $\tau$ is an injection and one has $A[t]/hA[t]\cong S/\tau(h)S.$ 
\end{theorem}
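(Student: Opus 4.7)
The plan is to apply a geometric presentation theorem of Gabber--Panin type, realising $\mathrm{Spec}(S)$ as an elementary Nisnevich neighbourhood of $\mathrm{Spec}(A[t]/hA[t])$ inside $\mathrm{Spec}(A[t])$ which factors through $\mathrm{Spec}(R_f)$. The starting point is that $A = R_{M_1,\dots,M_n}$ is a regular semi-local domain whose closed points all lie on the hypersurface $V(f)$, and the smoothness of $R$ over $k$ lets one work with a uniform system of parameters at these points.

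First, I would apply a variant of Noether normalization (``Quillen's trick'') in the semi-local setting: choose an element $t \in R$, vanishing appropriately at each $M_i$, such that after a suitable partial localization $R$ becomes finite over $A[t]$. The element $t$ should be chosen so that multiplication by $f$ (or a related element) on this finite $A[t]$-module has a monic characteristic polynomial $h(t) \in A[t]$ which cuts out the same closed subscheme as $f$ does along the image of the $M_i$. Taking $S$ to be the corresponding localization of $R \otimes_k A$, after inverting an element witnessing {\'e}taleness, gives the required isomorphism $A[t]/hA[t] \cong S/\tau(h)S$; essentially smoothness of $S$ over $k$ is preserved since $S$ arises from $R$ by tensor product with $A$ and a localization, both of which preserve essential smoothness.

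Second, one verifies that the localised map $\tau\colon A[t] \to S$ is {\'e}tale on the complement of $V(h)$. This follows from smoothness of $R$ over $k$ together with the Jacobian criterion, since $h$ is the characteristic polynomial witnessing finiteness and its derivative becomes a unit after the chosen localization. Injectivity of $\tau$ is automatic, as $S$ is a domain and $\tau^*$ is dominant. The remaining maps are then canonical: $\delta\colon S \to A$ comes from the section of $\tau^*$ at $t=0$, the map $p\colon R \to S$ is the natural one from the construction of $S$ as a localization of $R \otimes_k A$, and $p'$ is its restriction to $R_f$; commutativity of~\eqref{eq:panin-diag} is a routine verification.

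The main obstacle is achieving simultaneously the {\'e}taleness of $\tau^*$ and the quotient isomorphism $A[t]/hA[t] \cong S/\tau(h)S$; together, these are precisely the conditions defining a distinguished Nisnevich square. Arranging them requires Panin's refinement of Gabber's presentation lemma, namely the construction of so-called ``nice triples''. This is where the hypothesis that $R$ is smooth over the \emph{field} $k$ (rather than merely regular) enters crucially, since the argument relies on picking a good transversal coordinate $t$ that cuts out the hypersurface $V(f)$ cleanly at all the points $M_1,\dots,M_n$ simultaneously and produces an {\'e}tale morphism to $\mathbb{A}^1_A$ after inverting $h$.
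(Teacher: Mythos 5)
The paper does not reprove Panin's presentation theorem: its proof of this statement is a citation of \cite[Theorem~2.5]{Pa19} for everything except the last two assertions, followed by careful derivations of those two assertions (injectivity of $\tau$, and the isomorphism $A[t]/hA[t]\cong S/\tau(h)S$). Your proposal instead sketches the construction behind Panin's theorem (Quillen's trick, a finite morphism to $\mathbb{A}^1_A$, a monic characteristic polynomial $h$, \'etaleness after localization), but you yourself end by deferring the decisive step to ``Panin's refinement of Gabber's presentation lemma, namely the construction of nice triples'' --- which is exactly the result the paper cites wholesale. So your route is not more elementary or more self-contained; it is an informal expansion of the citation.

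The genuine gap is in the two supplementary assertions, which are the only part of this statement requiring new argument. First, the quotient isomorphism: what Panin's theorem (equivalently, the elementary distinguished Nisnevich square) gives you is an isomorphism $\bigl(S/\tau(h)S\bigr)_{\mathrm{red}}\cong\bigl(A[t]/hA[t]\bigr)_{\mathrm{red}}$ of \emph{reduced} closed subschemes only; you assert that the construction ``gives the required isomorphism $A[t]/hA[t]\cong S/\tau(h)S$'' without addressing possible nilpotents. The paper removes the reducedness by base-changing $Y=\mathrm{Spec}(S/h)\to X=\mathrm{Spec}(A[t]/h)$ along $X_{\mathrm{red}}\hookrightarrow X$, observing that the closed immersion $Y_{\mathrm{red}}\hookrightarrow Y_0$ is \'etale (hence an isomorphism), and then invoking the topological invariance of \'etale morphisms; some such argument is needed and is absent from your proposal. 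Second, for injectivity of $\tau$ your stated reason (``$S$ is a domain and $\tau^*$ is dominant'') is off: what matters is that $A[t]$ is a domain, and dominance itself needs justification --- the paper derives it from the fact that \'etale morphisms are open, $S\neq 0$, and $\mathrm{Spec}(A[t])$ is irreducible, so that $\ker\tau$ cuts out a closed set containing a dense open and is therefore contained in the (zero) nilradical.
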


The geometric content of this theorem can be heuristically explained as follows. Let $X=\mathrm{Spec}(R)$ be $k$-smooth affine scheme and $X_f=\mathrm{Spec}(R_f)$ its principal open set, as before $A= R_{M_1,\ldots, M_n}$. The left-hand square of the diagram~(\ref{eq:panin-diag}) defines an isomorphism of motivic spaces (i.e. Nisnevich sheaves) $\mathrm{Spec}(S)/\mathrm{Spec}(S_{\tau(h)})\cong(\mathbb A^1\times\mathrm{Spec}(A))/(\mathbb A^1\times\mathrm{Spec}(A))_h$. Then the theorem asserts that the canonical map $\mathrm{Spec}\,A\to X/X_f$ factors through $(\mathbb A^1\times\mathrm{Spec}\,A)/(\mathbb A^1\times\mathrm{Spec}\,A)_h$. We refer the reader to~\cite[Section~2]{Pa19} for more details.

\begin{proof}
 All of the assertions except the last two are a direct ring-theoretic restatement of (i)--(iii) of~\cite[Theorem~2.5]{Pa19}.
	Since $\tau^*$ is open, $S\neq 0$ and $A[t]$ is a domain, the image of $\tau^*$ is dense. Denote by $I$ the kernel of $\tau$. The closed subscheme defined by $I$ must contain the image of $\tau^*$ therefore $I$ is radical and hence zero.

	Set $X = \mathrm{Spec}(A[t]/h)$, $Y = \mathrm{Spec}(S/h)$. We need to show that the morphism $\tau' \colon Y \to X$ obtained from $\tau^*$ by restriction is an isomorphism. Set $Y_0 := Y \times_{X} X_\mathrm{red}$. From the fact that the left square is an elementary distinguished Nisnevich square we obtain that the composite morphism $Y_\mathrm{red} \not\hookrightarrow Y_0 \xrightarrow{\tau''} X_\mathrm{red}$ is an isomorphism, consequently since $\tau''$ is {\'e}tale, the closed embedding $(Y_0)_\mathrm{red} = Y_\mathrm{red} \not\hookrightarrow Y_0$ is also {\'e}tale, hence an open embedding, hence an isomorphism (see e.\,g. Corollary~3.6, Proposition~3.10 of~\cite{Mi80}). Thus, we conclude that $\tau''$ is an isomorphism and hence by the topological invariance of {\'e}tale morphisms~\cite[Theorem~3.23]{Mi80}, so is $\tau'$.
\end{proof}

\begin{corollary}
\label{esssmooth}
Let $k$ be a field.
Suppose that $K\colon\catname{Alg}_k\rightarrow\catname{Grp}$ is finitary and satisfies weak affine Nisnevich excision and $\mathbb{P}^1$-glueing properties. Let $R, M_1, \ldots, M_n, A$ be as in the statement of the above theorem. Denote by $E$ the fraction field of $A$ {\rm(}which coincides with the fraction field of $R${\rm)}.
Then for any $m\geq 0$ the natural homomorphism $K(A[x_1,\ldots, x_m])\to K(E[x_1,\ldots,x_m])$ is injective.
\end{corollary}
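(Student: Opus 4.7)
My plan is to combine axioms~\ref{CFC} and~\ref{LPP} with Panin's geometric presentation~\cref{paninthm} to reduce the claim, in one stroke, to the injectivity for monic localizations already established in~\cref{lmp}. Given $g \in K(A[\bar x])$ whose image in $K(E[\bar x])$ is trivial, I will successively simplify the situation until Panin's theorem becomes applicable.

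The first goal is to exploit finitariness to localize the triviality. Since $E = \colim_{f\ne 0} A_f$, axiom~\ref{CFC} provides a nonzero $f \in A$ with $g$ trivial in $K(A_f[\bar x])$. Clearing denominators I may assume $f \in R$, and by multiplying $f$ by a nonzero element of $\bigcap_{i=1}^n M_i$ (for instance the product of nonzero representatives taken one from each $M_i$) I may further assume $f \in \bigcap_i M_i$, which is the hypothesis of~\cref{paninthm}; only extra elements are inverted, so the triviality persists. A further use of~\ref{CFC}, together with the description $A = \colim_{s\in R\setminus\bigcup M_i} R_s$, lets me replace $R$ by a principal localization (still a smooth $k$-domain with the same semilocalization $A$ and the same maximal ideals $M_i$), and then absorb another principal factor into $f$ (which keeps $f$ inside $\bigcap_i M_i$), so as to lift $g$ to some $\hat g\in K(R[\bar x])$ which is already trivial in $K(R_f[\bar x])$.

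Now I will apply~\cref{paninthm} to $(R, M_1, \ldots, M_n, f)$ and tensor the resulting diagram over $k$ with $k[\bar x]$. The injectivity and étaleness of $\tau$, the isomorphism $A[t]/hA[t]\cong S/\tau(h)S$, the monicity of $h$ in $t$, and the integrality of $A[\bar x][t]$ and $S[\bar x]$ are all preserved under this polynomial base change. The commutativity of the right-hand column of Panin's diagram, combined with $K(\lambda_f)(\hat g)=1$, gives
\[ K(\lambda_{\tau(h)})\bigl(K(p)(\hat g)\bigr) = K(p')\bigl(K(\lambda_f)(\hat g)\bigr) = 1, \]
so weak affine Nisnevich excision~\ref{LPP} yields $K(p)(\hat g) = K(\tau)(\gamma)$ for some $\gamma$ in $\Ker\bigl(K(A[t][\bar x])\to K(A[t][\bar x]_h)\bigr)$. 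Since $A[\bar x]$ is a $k$-domain and $h$ is monic in $t$, \cref{lmp} applied to the $k$-domain $A[\bar x]$ gives $\gamma=1$, hence $K(p)(\hat g)=1$. Applying $K(\delta)$ and using $\delta\circ p = \lambda_{M_1,\ldots,M_n}$ then yields $g = K(\delta)(K(p)(\hat g)) = 1$, as required.

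The main obstacle I expect to confront is careful bookkeeping of base changes: \cref{paninthm} is formulated for $R$ itself, whereas the statement concerns $A[\bar x]$, so one must verify that the étale map $\tau$, the Nisnevich datum $(A[t], S, h)$, and the monic polynomial $h$ all remain compatible with the hypotheses of~\ref{LPP} and~\cref{lmp} after tensoring over $k$ with $k[\bar x]$. Once that parameter-extension step is in place, the rest is a straightforward diagram chase inside Panin's square.
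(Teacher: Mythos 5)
Your argument is correct and follows essentially the same route as the paper: reduce by finitariness to a principal localization of $R$ with the vanishing locus $f \in \bigcap_i M_i$, tensor Panin's diagram with $k[x_1,\ldots,x_m]$, transfer the kernel element via weak affine Nisnevich excision~\ref{LPP}, and kill it using the monic-localization injectivity of Lemma~\ref{lmp} applied to the domain $A[x_1,\ldots,x_m]$. The only (cosmetic) difference is the order of the two finitariness reductions --- the paper first lifts $g$ to $K(R_{f'}[x_1,\ldots,x_m])$ and only then finds $f$ with $K(\lambda_f)(g_1)=1$, which avoids the extra bookkeeping you flag at the end of your second paragraph.
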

\begin{proof}
We argue as in the proof of~\cite[Theorem~3.2]{Sta20}. Set $B=k[x_1,\ldots x_m]$. Let $g$ be an element of the kernel of $K(B \otimes_k A)\rightarrow K(B \otimes_k E)$. First of all, notice that both $A$ and $E$ are filtered colimits of principal localizations of $R$. Since $K$ is finitary, there exists $f' \in \cap_{i=1}^n(R \setminus M_i)$ such that $g$ is the image of some $g_1 \in K(R_{f'}[x_1,\ldots x_m])$ under $K(\lambda_{M_1',\ldots M_n'})$, where $M_i' = R_{f'} \cdot M_i$. Set $R' := R_{f'}$.
Clearly, there exists $f \in R'$ such that $K(\lambda_{f})(g_1) = 1$ in $K(R'_{f})$. 
Without loss of generality, we may assume that $f \in \cap_{i=1}^n M_i'$. 

We apply Theorem~\ref{paninthm} to the ring $R'$, maximal ideals $M_i'$ and the polynomial $f$ as above.
Tensoring~\eqref{eq:panin-diag} with $B$ and applying functor $K$ we obtain the following commutative diagram (we use the convention that tensoring with $B$ does not change the notation for the arrows of the diagram):
\begin{equation*} 
 \begin{tikzcd}[column sep=4em]
   & K(B \otimes_k A) & \\ 
   K(B \otimes_k A[t]) \ar{d}{K(\lambda_h)} \ar{ru}{K(ev_t(0))} \ar{r}{K(\tau)} & K(B \otimes_k S) \ar{u}{K(\delta)} \ar{d}{K(\lambda_{\tau(h)})}  & K(B \otimes_k R') \ar{ul}[swap]{K(\lambda)} \ar{d}{K(\lambda_f)} \ar{l}[swap]{K(p)} \\
   K(B \otimes_k A[t]_h) \ar{r}[swap]{}              & K(B \otimes_k S_{\tau(h)})      & K(B \otimes_k R'_f). \ar{l}{}\end{tikzcd}
\end{equation*}
Since $K(\lambda_f)(g_1) = 1$ the element $K(p)(g_1)$ lies in the kernel of $K(\lambda_{\tau(h)})$. Thus, by~\ref{LPP} there exists $g_2\in K(B \otimes_k A[t])$ such that $K(\lambda_h)(g_2)=1$ and $K(\tau)(g_2)=K(p)(g_1)$. Since $h \in A[t]$ is monic, by Lemma~\ref{lmp} the homomorphism $K(\lambda_h)$ is injective, therefore $g_2=1$. It remains to see that
$$g=K(\lambda)(g_1)=K(\delta)(K(p)(g_1))=K(\delta)(K(\tau)(g_2))=g_2(0)=1.\qedhere$$
\end{proof}

\begin{rem}
For the proof of~\cref{lpb} we only need the special cases $m=0,1$ of the above result.
\end{rem}

\begin{theorem}[Popescu]
\label{popescu} Let $k$ be a field, and $R$ a ring geometrically regular over $k$. Then $R$ is a filtered colimit of smooth $k$-algebras. \end{theorem}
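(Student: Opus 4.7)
The approach is the Néron--Popescu desingularization strategy specialized to a geometric base. My plan has three stages.

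First, I reduce the theorem to a local factorization statement. Writing $R$ tautologically as the filtered colimit of its finitely generated $k$-subalgebras $R_\lambda$ is not enough, since the $R_\lambda$ need not be smooth. However, it suffices to show that for every finitely presented $k$-algebra $A$ and every $k$-algebra homomorphism $\varphi \colon A \to R$ there exist a smooth $k$-algebra $S$ and a factorization $A \to S \to R$ of $\varphi$. Once this is available, the category of pairs $(S, \psi \colon S \to R)$ with $S$ smooth over $k$ becomes filtered (two such factorizations can be combined by a fibre-product construction followed by further smoothening of the result) and its colimit is $R$.

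Second, I introduce a numerical measure of non-smoothness. For a presentation $A = k[x_1, \dots, x_n]/(f_1, \dots, f_m)$, let $H_{A/k} \trianglelefteq A$ denote the standard smoothness ideal, whose vanishing locus is exactly the non-smooth locus of $\mathrm{Spec}(A) \to \mathrm{Spec}(k)$. The plan is to construct inductively a sequence of factorizations $A \to A^{(i)} \to R$ through finitely presented $k$-algebras $A^{(i)}$ for which the pulled-back smoothness ideals $H_{A^{(i)}/k} \cdot R$ form a strictly ascending chain in $R$. Since $R$ is Noetherian, the chain terminates, and at the final stage $H_{A^{(N)}/k} \cdot R = R$, so that $A^{(N)}$ is smooth over $k$ at every point in the image of $\mathrm{Spec}(R) \to \mathrm{Spec}(A^{(N)})$; a suitable Zariski localization of $A^{(N)}$ then gives the required smooth $S$.

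The main obstacle, and the hard content of Popescu's theorem, is the individual inductive step: given $\varphi \colon A \to R$ with $H_{A/k} \cdot R \neq R$, one must produce a compatible $A \to A' \to R$ with $H_{A'/k} \cdot R \supsetneq H_{A/k} \cdot R$. When $k$ has characteristic zero this can be carried out by a fairly direct Jacobian-criterion argument applied to an étale neighborhood built from partial derivatives; but in positive characteristic the inseparable directions can obstruct naive attempts, and one must exploit the formal smoothness of $k \to R$ (which over a field is equivalent to geometric regularity) to choose the étale modification so that the defining equations of $A$ can be reshaped into a standard smooth form modulo an error lying in a strictly larger ideal of $R$. This reshaping, together with the combinatorial bookkeeping which ensures that the induction actually terminates, is the technical core of Popescu's original argument, and I would not attempt to reproduce it here; instead I would follow Swan's streamlined exposition.
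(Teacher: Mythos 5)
The paper does not prove this statement at all — it simply cites Popescu's original article and Swan's exposition \cite{Po85, Sw98} — and your proposal is an accurate high-level outline of exactly the argument in those references (reduction to a lifting property for finitely presented algebras, Noetherian induction on the smoothness ideal $H_{A/k}\cdot R$, with the General Néron Desingularization step as the core), which you likewise defer to Swan. So your proposal is consistent with, and strictly more informative than, the paper's own treatment; no gap beyond the explicitly acknowledged reliance on the literature for the technical core.
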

\begin{proof} See~\cite{Po85}, \cite[Theorem~1.1]{Sw98}.
\end{proof}

Now we are ready to finish the proof of \cref{lpb}. 
\begin{proof}[Proof of~\cref{lpb}]
Our goal is to prove the triviality of $\mathrm NK(R)$.
By~\cref{popescu} $R$ is a filtered colimit of smooth $k$-algebras.
Since filtered colimits commute with finite limits and the functor $K$ is finitary, the functor $\mathrm NK$ is also finitary.
Thus, it suffices to verify the triviality of $\mathrm NK(R)$ for a smooth $k$-algebra $R$.
Further, by Lemma~\ref{LGP} we are left to prove that $\mathrm NK(R_M)$ is trivial for every maximal ideal $M$ in a such an algebra.
Since every smooth scheme over $k$ is a disjoint union of smooth irreducible $k$-schemes, we may assume, without loss of generality, that $R$ is a domain.
Denote by $E$ the fraction field of $R_M$. Consider the diagram
\[\begin{tikzcd}
K\bigl(R_M[t]\bigr) \ar[hookrightarrow]{r} \ar{d}{ev_t(0)} & K(E[t]) \ar{d}{\cong}\\
K\bigl(R_M\bigr) \ar[hookrightarrow]{r} & K(E),
\end{tikzcd}\]
in which the right vertical arrow is an isomorphism by~\ref{HIF} and the horizontal arrows are injective by Corollary~\ref{esssmooth}.
Thus, the left vertical arrow is also injective and $\mathrm NK(R_M) = 1$, as required.
\end{proof}
\begin{rem}\label{rem:relax}
It is clear from the above proof that it possible to weaken the requirements on $K$ in the statement of~\cref{lpb}. For example, we could require that axioms \ref{DP}--\ref{PGP} hold only for essentially smooth $k$-algebras. Further, the assertion of the axiom~\ref{CFC} can be restricted to algebras geometrically regular over $k$.
\end{rem}

We can also prove the following result (cf.~\cite[Th{\'e}or{\`e}me~1.1]{CTO92}, cf. also with~\cite[Proposition~2.2]{AHW20}).
\begin{theorem}\label{thm:gb}
Let $k$ be a field.
Suppose that $K\colon\catname{Alg}_k\rightarrow\catname{Grp}$
is a finitary functor satisfying weak affine Nisnevich excision and $\mathbb{P}^1$-glueing properties for domains. 
Let $A$ be a semi-local domain geometrically regular over $k$. 
Denote by $E$ the fraction field of $A$.
Then the natural homomorphism $K(A) \to K(E)$ is injective.
\end{theorem}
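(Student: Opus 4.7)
The plan is to reduce the statement to Corollary~\ref{esssmooth} with $m=0$, which already yields the injectivity of $K(A)\to K(E)$ in the special case when $A$ is a semi-localization of a smooth $k$-domain at finitely many maximal ideals. So for a given kernel element $g \in \Ker\bigl(K(A) \to K(E)\bigr)$, I would produce an intermediate essentially smooth semi-local subring $A'$ of exactly this form, through which the entire situation factors, and then invoke the corollary on $A'$ to conclude $g = 1$.

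The first step is to rewrite $A$ via Popescu's theorem (\cref{popescu}) as a filtered colimit of smooth $k$-algebras; after splitting each such algebra into its smooth connected components and noting that the map to the domain $A$ kills all but one factor, I may take these to be smooth $k$-domains $R_i$. Combining the finitary property \ref{CFC} with the presentation $E = \colim_{0 \neq f \in A} A_f$, I descend the data to a single index: there will be a smooth $k$-domain $R$, a homomorphism $\phi \colon R \to A$, an element $g_0 \in K(R)$ mapping to $g$, and an element $\tilde f \in R$ with nonzero image $f = \phi(\tilde f)$ in $A$ such that $g_0 \mapsto 1$ already in $K(R_{\tilde f})$.

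Next I construct $A'$. Let $\mathfrak{m}_1, \ldots, \mathfrak{m}_n$ be the maximal ideals of $A$, set $P_j = \phi^{-1}(\mathfrak{m}_j)$, and choose any maximal ideals $M_j$ of $R$ containing $P_j$; put $A' := R_{M_1, \ldots, M_n}$. Any $s \in R \setminus \bigcup_j M_j$ avoids every $P_j$, so $\phi(s)$ is a unit in the semi-local ring $A$, and hence $\phi$ factors as $R \hookrightarrow A' \to A$. Since $A'$ is a semi-localization of the smooth $k$-domain $R$ at finitely many maximal ideals, Corollary~\ref{esssmooth} at $m = 0$ gives the injection $K(A') \hookrightarrow K(\mathrm{Frac}(R))$. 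Letting $g_0'$ denote the image of $g_0$ in $K(A')$ and using that the composite $R \to R_{\tilde f} \to \mathrm{Frac}(R)$ coincides with the composite $R \to A' \hookrightarrow \mathrm{Frac}(R)$, the hypothesis $g_0 \mapsto 1$ in $K(R_{\tilde f})$ forces $g_0' \mapsto 1$ in $K(\mathrm{Frac}(R))$, whence $g_0' = 1$. Pushing forward along $A' \to A$ then yields $g = 1$.

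The delicate point I anticipate is the book-keeping in the second paragraph: one must arrange, via a careful sequential use of the finitary axiom, that a \emph{single} smooth $k$-domain $R$ carries both a lift $g_0$ of $g$ and a lift $\tilde f$ of a trivializing element of $A$, with $g_0$ already killed in $K(R_{\tilde f})$ rather than merely in some deeper colimit. The rest of the argument is then formal, resting entirely on the Nisnevich excision and $\mathbb{P}^1$-glueing inputs already absorbed into Corollary~\ref{esssmooth}.
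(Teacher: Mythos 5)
Your proposal is correct and follows essentially the same route as the paper's proof: Popescu plus the finitary axiom to descend $g$ and a trivializing localization to a single smooth $k$-domain $R$, then the choice of maximal ideals $M_j \supseteq \phi^{-1}(\mathfrak m_j)$ and an application of Corollary~\ref{esssmooth} with $m=0$ to the semi-localization $R_{M_1,\ldots,M_n}$, through which $\phi$ factors. The only (immaterial) difference is that the paper interposes the semi-localization at the primes $P_j$ between $R_{M_1,\ldots,M_n}$ and $A$, whereas you factor $\phi$ through $R_{M_1,\ldots,M_n}$ directly.
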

\begin{proof}
By Theorem~\ref{popescu} $A$ is a filtered colimit of smooth $k$-algebras. Since $A$ is a domain we may assume additionally that these smooth $k$-algebras are domains.

Since $K$ is finitary, we may assume that $g \in \Ker(K(A) \to K(E))$ comes from some element $g_1\in\mathrm K(R)$, where $R$ is a smooth $k$-domain. We may assume additionally that $g_1$ lies in the kernel of the natural homomorphism $K(R) \to K(\mathrm{Frac}(R))$.

Denote by $P_i$ the preimages of the maximal ideals of $A$ and choose some maximal ideals $M_i$ of $R$ such that $M_i$ contains $P_i$. Then by Corollary~\ref{esssmooth}, $g_1$ vanishes in the semi-localization of $R$ in the set of maximal ideals $M_i$, therefore it vanishes in the semi-localization of $R$ in the set of prime ideals $P_i$. Since $R\rightarrow A$ factors through the latter semi-localization, we conclude that $g=1$. 
\end{proof}

\section{Proof of the main results} \label{sec:main-results}
\subsection{Proof of Theorems~\ref{theorem:LP-for-K2} and~\ref{theorem:Gersten}}
Throughout this section $\Phi$ is an irreducible root system of rank $\geq 2$ and $R$ is a commutative ring with a unit. Recall that in the case when $\Phi$ is simply-laced the Steinberg group $\St(\Phi, R)$ can be defined by means of the generators $x_\alpha(\xi)$, $\xi \in R$ and the following set of defining relations:
\begin{align}
x_{\alpha}(a)\cdot x_{\alpha}(b)&=x_{\alpha}(a+b),\text{ for } \alpha\in \Phi;\tag{R1} \label{R1}\\
[x_{\alpha}(a),\,x_{\beta}(b)]  &=1,\text{ for }\alpha+\beta\not\in\Phi\cup 0; \tag{R2} \label{R2} \\
[x_{\alpha}(a),\,x_{\beta}(b)]  &=x_{\alpha+\beta}(N_{\alpha\beta} \cdot ab)\text{ in the case }\alpha+\beta\in\Phi. \tag{R3} \label{R3} \end{align}
The integers $N_{\alpha,\beta} = \pm 1$ appearing in the last identity are called the {\it structure constants} of the Chevalley group $\mathrm{G}_\mathrm{sc}(\Phi, R)$. They coincide with the structure constants of the corresponding simple Lie algebra. More information about Steinberg groups, structure constants and further references can be found e.\,g. in~\cite[\S~3]{St71}, \cite[\S~2.4]{LSV20}. Notice that the above definition of the Steinberg group makes sense even if $R$ does not have a unit, this will be important later in~\cref{sec:patching}.

We denote by $\E_\Lambda(\Phi, R)$ the {\it elementary subgroup} of $G = \mathrm{G}_\Lambda(\Phi, R)$. It is defined as the abstract subgroup generated by elementary root unipotents $t_\alpha(\xi)$, $\alpha \in \Phi$, $\xi \in R$, see e.\,g.~\cite{St71, St78, VZ20, Abe83}. It is clear that $\E_\Lambda(\Phi, R)$ is the image of the homomorphism $\pi$ from~\eqref{eq:main-exactSeq}. We write $\E_\mathrm{sc}(\Phi, R)$ instead of $\E_{P(\Phi)}(\Phi, R)$.

For $a, b \in R$ set $y_\alpha(a, b) = [x_\alpha(a), x_{-\alpha}(b)]$.
For an ideal $I \trianglelefteq R$ we denote by $\overline{\St}(\Phi, R, I)$ the normal closure in $\St(\Phi, R)$ of the subgroup generated by $x_\alpha(a)$, $a\in I$. Recall from~\cite[Lemma~5]{S15} that
\begin{equation} \label{eq:relative-st-ker}
 \overline{\St}(\Phi, R, I) = \Ker(\St(\Phi, R) \to \St(\Phi, R/I)).
\end{equation}

\begin{lemma} \label{lem:c-identities} For an arbitrary irreducible root system $\Phi$ of rank $\geq 2$, arbitrary ideals $A, B \trianglelefteq R$ and all $a \in A$, $b \in B$, $c \in R$ the following congruences hold:
\begin{itemize}
 \item $y_\alpha(a, cb) \equiv y_\alpha(ac, b)\ (\mathrm{mod}\ \overline{\St}(\Phi, R, AB))$ in the case when either $\Phi \neq \rC_{\ell}$ or $\alpha$ is short.
 \item $y_\alpha(a, c^2b) \equiv y_\alpha(ac^2, b)\ (\mathrm{mod}\ \overline{\St}(\Phi, R, AB))$, $y_\alpha(a, cb)^2 \equiv y_\alpha(ac, b)^2\ (\mathrm{mod}\ \overline{\St}(\Phi, R, AB))$ in the case $\Phi = \rC_{\ell}$ and $\alpha$ is long.
\end{itemize} \end{lemma}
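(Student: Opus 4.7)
The plan is to reduce to a computation in the quotient Steinberg group $\St(\Phi, R/AB)$ via~\eqref{eq:relative-st-ker}, and then to exchange the scalar $c$ between the two arguments of $y_\alpha$ using the Chevalley commutator formula. By~\eqref{eq:relative-st-ker} the asserted congruences become equalities in $\St(\Phi, S)$, where $S = R/AB$; the crucial point is that in $S$ one has $\bar a\bar b = 0$ for all $a\in A$, $b\in B$, so any generator $x_\gamma(\bar a\bar b)$---and more generally any commutator whose Chevalley coefficient happens to lie in $AB$---is trivial in $\St(\Phi, S)$.

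For the first bullet point, I would pick an auxiliary root $\beta\in\Phi$ such that $\alpha+\beta\in\Phi$ (possible since $\Phi$ is irreducible of rank $\geq 2$) and use~\eqref{R3} to rewrite
\[ x_\alpha(ac) = [x_{\alpha+\beta}(a),\,x_{-\beta}(\varepsilon_1 c)], \qquad x_{-\alpha}(cb) = [x_{-\alpha-\beta}(\varepsilon_2 c),\,x_\beta(b)] \]
for appropriate signs $\varepsilon_i$ dictated by the structure constants $N_{\gamma,\delta}$. In the simply-laced case, or when $\alpha$ is short and $\beta$ can be chosen so that the rank-$2$ subsystem generated by $\alpha$ and $\beta$ is of type $\rA_2$, no higher-degree correction appears in these Chevalley formulas. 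Substituting the expressions above into the definitions of $y_\alpha(ac,b)$ and $y_\alpha(a,cb)$, the two sides of the desired identity become iterated double commutators of the three generators labelled by $a$, $c$, and $b$; the Hall--Witt identity applied to these three generators produces three terms, one of which has an innermost commutator of the form $[x_\bullet(a), x_\bullet(b)]$ with coefficient $\pm ab\in AB$ (and is therefore trivial in $\St(\Phi, S)$), while the other two, after rearrangement using~\eqref{R2} and~\eqref{R3}, are identified with $y_\alpha(a, cb)$ and $y_\alpha(ac, b)$ up to a conjugation that preserves $\overline{\St}(\Phi, R, AB)$.

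For the long-root case $\Phi = \rC_\ell$, every decomposition $\alpha = \alpha_1+\alpha_2$ of $\alpha$ as a sum of two roots is forced to have both summands short, and the Chevalley commutator $[x_{\alpha_1}(u), x_{\alpha_2}(v)]$ in the $\rC_2$ subsystem generated by $\alpha_1, \alpha_2$ carries an unavoidable quadratic correction $x_{2\alpha_1+\alpha_2}(\cdots)$ or $x_{\alpha_1+2\alpha_2}(\cdots)$. Tracking this correction through the Hall--Witt computation produces either a factor of $c^2$ in the $y$-symbol (yielding the identity $y_\alpha(a, c^2 b) \equiv y_\alpha(ac^2, b)$) or a discrepancy that becomes trivial only after squaring (yielding $y_\alpha(a, cb)^2 \equiv y_\alpha(ac, b)^2$), which are precisely the two formulae of the second bullet. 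The main technical obstacle is the careful bookkeeping of the signs $N_{\gamma,\delta}$ and of the various conjugations introduced by Hall--Witt; this becomes intricate in the non-simply-laced cases and is exactly what dictates the case distinction in the statement of the lemma.
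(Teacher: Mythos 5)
For the record, the paper does not write this computation out at all: its ``proof'' is the single observation that the argument of \cite[Theorem~5]{VZ20} uses only the Chevalley commutator formula and the standard commutator identities, hence transfers verbatim from elementary subgroups to Steinberg groups. Your opening reduction is fine and is the right way to set things up: by~\eqref{eq:relative-st-ker} the congruences are equalities in $\St(\Phi,S)$ with $S=R/AB$, where $\bar A\bar B=0$, so any commutator whose Chevalley coefficient lands in $AB$ dies. Your overall strategy (decompose $x_\alpha(ac)$ as a commutator over an auxiliary root $\beta$, apply Hall--Witt, kill the term with coefficient in $AB$) is also exactly the strategy of the computation the paper is citing.

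However, there is a concrete gap in your Hall--Witt step. With $x=x_{\alpha+\beta}(a)$, $y\sim x_{-\beta}(\pm c)$, $z=x_{-\alpha}(b)$, the three double commutators are (up to signs and the built-in conjugations) $[[x,y],z]=y_\alpha(\pm ac,b)$, $[[y,z],x]=[x_{-\alpha-\beta}(\pm cb),x_{\alpha+\beta}(a)]$, and $[[z,x],y]$, whose inner commutator is $x_\beta(\pm ab)\equiv 1$. The two surviving terms are therefore $y_\alpha(ac,b)$ and $y_{\alpha+\beta}(a,\pm cb)$ --- \emph{not} $y_\alpha(ac,b)$ and $y_\alpha(a,cb)$ as you claim: the symbol in which $c$ has migrated to the second slot sits at the root $\alpha+\beta$, so a single application of Hall--Witt does not close the loop. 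One must iterate, e.g.\ apply the same root-shifting relation a second time with $c=1$ and auxiliary root $-\beta$ to return from $\alpha+\beta$ to $\alpha$ without disturbing the ring elements; this is where the case analysis for $\rC_\ell$-long roots (where every decomposition $\alpha=\gamma+\delta$ forces $\gamma,\delta$ short and the commutator formula acquires the quadratic correction) actually bites. A second point you dismiss too quickly: the conjugations in Hall--Witt are by $x_{-\beta}(\pm c)$ and $x_{-\alpha}(b)$, and stripping them requires showing that the relevant commutators, e.g.\ $[y_\alpha(ac,b),x_{-\beta}(c)]$, vanish modulo $\overline{\St}(\Phi,R,AB)$; normality of $\overline{\St}(\Phi,R,AB)$ does not give this, and invoking $[\overline{\St}(\Phi,R,A),\overline{\St}(\Phi,R,B)]\subseteq\overline{\St}(\Phi,R,AB)$ here would be circular, since that inclusion is downstream of the present lemma (cf.\ the proof of Lemma~\ref{lem:fprod}). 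Both issues are repairable --- this is precisely the bookkeeping carried out in \cite{VZ20} --- but as written the sketch does not yet constitute a proof.
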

\begin{proof}
 Observe that the proof of~\cite[Theorem~5]{VZ20} is based solely on computations with Chevalley commutator formula, which all can be reproduced verbatim for Steinberg groups.
\end{proof}

Notice that the Steinberg group functor {\it does not} commute with general finite limits. However, it satisfies the following weaker property, which we are going to use in the sequel.
\begin{lemma} \label{lem:fprod} For an arbitrary irreducible root system $\Phi$ of rank $\geq 2$ the Steinberg group functor $\St(\Phi, -)$ commutes with finite direct products. \end{lemma}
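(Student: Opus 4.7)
The plan is to reduce to binary products $R = R_1 \times R_2$ and construct an explicit inverse to the canonical map
\[
(\St(\Phi, p_1), \St(\Phi, p_2))\colon \St(\Phi, R) \longrightarrow \St(\Phi, R_1)\times\St(\Phi, R_2)
\]
induced by the two projections. The empty-product case $\St(\Phi, 0) = 1$ is immediate from (R1) applied with $a=b=0$, and a straightforward induction will pass from binary products to arbitrary finite ones. Let $e_1,e_2 \in R$ denote the orthogonal idempotents. The inverse I will produce has the form $\phi(g_1,g_2) = \phi_1(g_1)\phi_2(g_2)$, where $\phi_i\colon \St(\Phi, R_i)\to \St(\Phi, R)$ is defined on generators by $\phi_i(x_\alpha(\xi)) = x_\alpha(\xi e_i)$.

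The first routine step is to check that each $\phi_i$ respects the defining relations of $\St(\Phi, R_i)$: (R1) is immediate, and each Chevalley commutator relation (whether the simply-laced (R2)--(R3), or the general version with higher powers for non-simply-laced types) is preserved because $e_i^n = e_i$ for all $n\geq 1$, so the coefficients on both sides match after multiplication by $e_i$.

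The main obstacle is showing that $\phi_1$ and $\phi_2$ have pointwise commuting images. For generators $x_\alpha(ae_1)$ and $x_\beta(be_2)$ with $\alpha+\beta\neq 0$, the Chevalley commutator formula expresses the commutator as a product of $x_{i\alpha+j\beta}(c_{ij}a^ib^j e_1^i e_2^j) = 1$, since every term contains the factor $e_1 e_2 = 0$. The delicate case is $\beta = -\alpha$, where one must show $y_\alpha(ae_1, be_2) = 1$, a relation which is \emph{not} a direct consequence of the presentation of $\St(\Phi, R)$. Here I will invoke \cref{lem:c-identities} with the ideals $A = Re_1$ and $B = Re_2$: because $AB = 0$, the ``error'' subgroup $\overline{\St}(\Phi, R, AB)$ is trivial. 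Taking $c = e_2$, so that $c = c^2 = e_2$, both bullets of the lemma collapse to the exact identity
\[
y_\alpha(ae_1, be_2) \;=\; y_\alpha(ae_1, e_2\cdot be_2) \;=\; y_\alpha(ae_1\cdot e_2, be_2) \;=\; y_\alpha(0, be_2) \;=\; 1,
\]
valid in every irreducible type of rank $\geq 2$ (using the first bullet when $\Phi\neq\rC_\ell$ and the first congruence of the second bullet when $\Phi=\rC_\ell$ and $\alpha$ is long).

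With commutation in hand, $\phi$ is a bona fide group homomorphism. Surjectivity follows immediately from (R1) via $x_\alpha(\xi) = x_\alpha(\xi e_1)\cdot x_\alpha(\xi e_2) = \phi_1(x_\alpha(p_1\xi))\phi_2(x_\alpha(p_2\xi))$. Injectivity is even easier: on generators $\St(\Phi, p_i)\circ\phi_j = \delta_{ij}$, since $p_i(\xi e_j) = \xi\delta_{ij}$, so the canonical map is a one-sided inverse to $\phi$. Hence $\phi$ is an isomorphism, and the canonical map is its inverse, completing the proof.
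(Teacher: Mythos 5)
Your proposal is correct and follows essentially the same route as the paper: both reduce to binary products, realize $\St(\Phi,R_i)$ inside $\St(\Phi,R_1\times R_2)$ via the idempotent sections of the projections (which is what your $\phi_i$ amounts to, using that $\St(\Phi,-)$ is defined for non-unital rings), dispose of the commutators $[x_\alpha(ae_1),x_\beta(be_2)]$ for $\beta\neq-\alpha$ by the Chevalley commutator formula since $e_1e_2=0$, and handle the case $\beta=-\alpha$ by applying \cref{lem:c-identities} with $AB=0$ and an idempotent $c=c^2$. The only cosmetic difference is that you make the inverse map and the one-sided-inverse check $\St(\Phi,p_i)\circ\phi_j=\delta_{ij}$ explicit, where the paper argues via the subgroups $G_1,G_2$ generating and commuting.
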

\begin{proof} 
It suffices to verify the assertion for binary direct products.
Observe that canonical projections $R_1 \times R_2 \to R_i$, $i=1,2$ split in the category of rings without unit, therefore the groups $G_i = \St(\Phi, R_i)$ embed as subgroups into $G = \St(\Phi, R_1 \times R_2)$. It is also clear that $G_i$ together generate $G$. Thus, to verify the isomorphism $G \cong G_1 \times G_2$ it suffices to show the triviality of the commutator subgroup $[G_1, G_2] \leq G$.

Set $A = R_1\times 0$, $B = 0 \times R_2$. It is clear that $A, B \trianglelefteq R_1 \times R_2$. 
It follows directly from~\eqref{R2}--\eqref{R3} that the commutators $[x_{\alpha}(a),\ x_\beta(b)]$ are trivial for all $\beta \neq -\alpha$, $a \in A$, $b\in B$. On the other hand, to verify the triviality of $y_\alpha(a, b)$ we can apply the congruences of Lemma~\ref{lem:c-identities} (since $AB=0$ these congruences turn into equalities).
Indeed, setting $c = c^2 = (1, 0)$ we obtain that $y_\alpha(a, b) = y_\alpha(ac, b) = y_\alpha(a, bc) = y_\alpha(a, 0) = 1$. \end{proof}

\begin{lemma} \label{k2cdc} 
For an arbitrary root system $\Phi$ the functors $\Gsc(\Phi,\,-)$, $\St(\Phi,\,-)$ and $\K_2(\Phi,\,-)$ commute with filtered colimits.
\end{lemma}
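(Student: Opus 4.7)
The plan is to prove the three assertions separately and in the order given. For $\Gsc(\Phi,-)$ the claim is purely formal: the simply-connected Chevalley--Demazure group scheme is an affine group scheme of finite type over $\ZZ$, and since $\ZZ$ is Noetherian this means it is represented by a finitely presented $\ZZ$-algebra $A$. The functor $\mathrm{Hom}_{\Rng}(A,-)$ then commutes with filtered colimits by the standard characterization of finitely presented objects in $\Rng$.

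The main content of the lemma is the case of $\St(\Phi,-)$, which I would handle through the explicit generators-and-relations presentation \eqref{R1}--\eqref{R3} recalled at the start of this section. Writing $R = \colim_i R_i$, functoriality produces a canonical comparison homomorphism $\colim_i \St(\Phi, R_i) \to \St(\Phi, R)$, and I would check that it is both surjective and injective. Surjectivity is immediate: each generator $x_\alpha(\xi)$ of $\St(\Phi, R)$ has $\xi$ coming from some $R_i$ and hence lifts. For injectivity one uses that any word in the generators that becomes trivial in $\St(\Phi, R)$ does so via finitely many applications of \eqref{R1}--\eqref{R3}, and each individual relation involves only finitely many elements of $R$; therefore a sufficiently large $R_j$ in the filtered system already sees all the parameters needed to witness triviality in $\St(\Phi, R_j)$. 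Phrased more abstractly: the free group on the functorial generating set $R \mapsto \{x_\alpha(\xi)\}_{\alpha,\xi}$ commutes with filtered colimits as a left adjoint, and the normal closure of the (functorial, finitary) set of defining relations does so too because filtered colimits are exact in $\catname{Grp}$; taking the quotient then gives the claim.

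Finally, the exact sequence \eqref{eq:main-exactSeq} presents $\K_2(\Phi, R)$ as $\Ker\bigl(\pi\colon \St(\Phi, R) \to \Gsc(\Phi, R)\bigr)$, a finite limit in $\catname{Grp}$. Filtered colimits commute with finite limits in $\catname{Set}$ and hence in $\catname{Grp}$, so the assertion for $\K_2(\Phi,-)$ follows formally from the two cases already handled. No step presents a serious obstacle — this is essentially bookkeeping in universal algebra. The only point deserving a moment's care is the injectivity argument for $\St(\Phi,-)$, which depends on the observation that each Chevalley commutator relation is a finite expression in the ring parameters so that triviality is always witnessed at some finite stage of the colimit.
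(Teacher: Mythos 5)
Your proposal is correct and follows essentially the same route as the paper: representability of $\Gsc(\Phi,-)$ by a finitely presented $\ZZ$-algebra, the standard finitary generators-and-relations argument for $\St(\Phi,-)$ (which the paper dismisses as "obvious from the definition"), and commutation of filtered colimits with kernels in $\catname{Grp}$ for $\K_2(\Phi,-)$. No gaps.
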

\begin{proof}
The assertion for $\mathrm G_{\mathrm{sc}}(\Phi,\,-)$ follows from the fact that it is represented in the category $\catname{Ring}$ by a finitely presented (Hopf) $\ZZ$-algebra (by e.\,g.~\cite[\href{https://stacks.math.columbia.edu/tag/00QO}{Tag~00QO}]{stacks-project}). The assertion for $\St(\Phi, -)$ is obvious from its definition. The assertion for the functor $\K_2(\Phi, -)$ now follows from the assertions for $\St(\Phi, -)$ and $\Gsc(\Phi, R)$ using the fact that filtered colimits commute with finite limits in $\catname{Grp}$ (in particular, they commute with kernels).
\end{proof}

\begin{proof}[Proof of \cref{theorem:LP-for-K2}]
Let us verify that the axioms \ref{CFC}--\ref{HIF} are satisfied for the functor $\K_2(\Phi, -)$. Axiom~\ref{CFC} is satisfied by Lemma~\ref{k2cdc}. Axiom~\ref{DP} follows from the fact that for a simply-laced $\Phi$ of rank $\geq 3$ Steinberg groups $\St(\Phi, -)$ satisfy Tulenbaev lifting property in the sense of~\cite[Definition~2.1]{LS17} (more precisely, \ref{DP} follows from the injectivity of the left arrow in the diagram~(2.2) ibid.).
The axiom~\ref{LPP} is checked in~\cref{sec:patching} below.
In the case $\Phi = \rA_\ell$ the axiom~\ref{PGP} is satisfied by~\cite[Theorem~5.1]{Tu83}. In the orthogonal case it is satisfied by~\cite[Theorem~1]{LS20}. Finally, axiom~\ref{HIF} is satisfied by the Korollar of \cite[Satz~1]{Re75}.

Notice that every regular ring containing a field $k$ is geometrically regular over the prime subfield of $k$ (which is always perfect). Now the assertion of~\cref{theorem:LP-for-K2} in the simply-laced case ($\Lambda = P(\Phi)$) follows from~\cref{lpb}.

It remains to verify that the assertion of~\cref{theorem:LP-for-K2} holds for the Chevalley group $G = \mathrm{G}_\Lambda(\Phi, -)$ of non-simply-connected type. Notice that there is an exact sequence of elementary subgroups
\begin{equation} \label{eq:E-seq} \begin{tikzcd} 1 \ar[r] & M(R) \ar[r] & \E_\mathrm{sc}(\Phi, R) \ar[r] & \E_\Lambda(\Phi, R), \end{tikzcd} \end{equation}
where $M$ is either $\mu_n$ for some $n$ or $\mu_2\times \mu_2$ (cf.~\cite[\S~3]{St67}).
Applying the snake lemma to the diagram
$$
\begin{tikzcd} 
  &  1\ar[r]\ar[d]  & \mathrm{St}(\Phi, R)   \ar{d} \ar[r]  & \mathrm{St}(\Phi, R) \ar{d} \ar[r]  & 1 \\
      1  \ar[r] & M(R)\ar[r] & \E_\mathrm{sc}(\Phi, R) \ar[r] &  \E_\Lambda(\Phi, R) &
    \end{tikzcd}
$$
and the diagram obtained from the above one by replacing $R$ with $R[t]$ we obtain the rows of the diagram
\begin{equation}\label{eq:snake-A1} \begin{tikzcd} 
      1  \ar[r] & \K_2(\Phi, R)   \ar{d}{\cong} \ar[r]  & \K_2^G(R) \ar{d}{i_G} \ar[r] & M(R) \ar[r] \ar{d}{i_M} & 1 \\
      1  \ar[r] & \K_2(\Phi, R[t])\ar[r] & \K_2^G(R[t]) \ar[r] & M(R[t]) \ar[r] & 1.
    \end{tikzcd} \end{equation}
We already know that the left vertical arrow is an isomorphism. The homomorphism $i_M$ is also an isomorphism since $R$ is reduced. Thus, $i_G$ is an isomorphism by 5-lemma.
\end{proof}

\begin{proof}[Proof of \cref{theorem:Gersten}]
  Set $E := \mathrm{Frac}(A)$. The injectivity of $\K_2(\Phi, A) \to \K_2(\Phi, E)$ follows from~\cref{thm:gb}. Since $\mathrm{G}_\mathrm{sc}(\Phi, A) \to \mathrm{G}_\mathrm{sc}(\Phi, E)$ is injective, this implies that $\St(\Phi, A) \to \St(\Phi, E)$ is also injective. Finally, the last assertion follows from the fact that the group $\K_2(\Phi, E)$ is isomorphic to the Milnor group $\K_2^\mathrm{M}(E)$ by Matsumoto's theorem~\cite[Theorem~5.10]{Ma69}.
\end{proof}

\begin{rem}\label{rem:many-units}
We keep the notation of~\cref{theorem:Gersten}. The injectivity of $\K_2(\Phi, A) \to \K_2(\Phi, E)$ in the linear case or under the additional assumption that $A$ does not have finite residue fields can be deduced from previously known results. Indeed, Popescu's \cref{popescu} and Quillen's theorem~\cite[Theorem~V.9.6]{Kbook} imply the injectivity of $\K_2(A) \to \K_2(E)$ (cf.~\cite[Theorem~A]{Pa03}). By Matsumoto's theorem $\K_2(\Phi, E) \cong \K_2(E)$. We claim that under our assumptions $\K_2(\Phi, A) \cong \K_2(A)$. Indeed, in the linear case this follows from the main result of~\cite{ST81}. If $A$ does not have finite residue fields this follows from~\cite[Example~1.5d and Theorem~3.7]{vdK77}. On the other hand, if $A$ possesses finite residue fields and $\Phi=\rD_\ell$ the assertion of~\cref{theorem:Gersten} is new.\end{rem}

\begin{proof}[Proof of Corollary~\ref{cor:various-facts}]
  Set $R = k[t_1,\ldots t_n]$. Recall from~\cite{Abe83} that $\K_1(\rD_\ell, R) = \K_1(\rD_\ell, k) = 1$, so $\mathrm{Spin}_{2\ell}(R) = \E_\mathrm{sc}(\rD_\ell, R)$. The presentation a la Steinberg for $\mathrm{Spin}_{2\ell}(R)$ now follows from~\cref{theorem:LP-for-K2} and Matsumoto's theorem~\cite[Theorem~5.10]{Ma69}. The equality $\mathrm H_2(\E_\mathrm{sc}(\rD_\ell, R),\,\mathbb Z) = \K_2(\rD_\ell, R)$ is the main result of~\cite{LS17}. \end{proof}
 
 \begin{proof}[Proof of Corollary~\ref{cor:H_2-O}]
  We will give the proof for~\eqref{eq:H_2-O}, the proof for~\eqref{eq:H_2-SO} being similar.
  From the homological Lyndon--Hochschild--Serre spectral sequence associated with the short exact sequence $\mathrm{EO}_{2\ell}(R) \hookrightarrow \mathrm{O}_{2\ell}(R) \twoheadrightarrow \mathrm{KO}_{1, 2\ell}(R)$ we obtain an exact sequence
  \[ \begin{tikzcd} \mathrm H_3(\mathrm{KO}_{1,2\ell}(R)) \ar[r] & \mathrm H_2(\mathrm{EO}_{2\ell}(R))_{\mathrm{KO}_{1,2\ell}(R)} \ar[r] & \mathrm H_2(\mathrm{O}_{2\ell}(R)) \ar[r] &  \mathrm H_2(\mathrm{KO}_{1, 2\ell}(R)) \ar[r] & 1, \end{tikzcd} \]
  where the lower index in the second term denotes the coinvariants of the action of $\mathrm{KO}_{1,2\ell}(R)$.
  The functor $\mathrm{KO}_{1, 2\ell}(-)$ is $\mathbb{A}^1$-invariant (e.\,g. by~\cite[Theorem~1.1]{Sta20}).
  The functor $\mathrm H_2(\mathrm{EO}_{2\ell}(-))$ clearly coincides with $\mathrm{KO}_{2, 2\ell} := \K_2^{\mathrm {SO}_{2\ell}}$ and is also $\mathbb{A}^1$-invariant by~\cref{theorem:LP-for-K2}. Thus, by 5-lemma we conclude that the central term is also $\mathbb{A}^1$-invariant.
 \end{proof}

\subsection{\texorpdfstring{$\K_2(\Phi, R)$}{K2(R)} as $\mathbb{A}^1$-fundamental group} \label{sec:K2-as-pi-1}
Recall from~\cite{Jar83} that for an arbitrary commutative (unital) ring $R$ one can define the standard simplicial ring $R[\Delta^\bullet]$ as follows:
\begin{equation}
 R[\Delta^n] = R[t_0,\ldots t_n]\left/\left\langle \sum_{i=0}^n t_i -1 \right\rangle,\right.\ d_i(t_j) = \left\{ \begin{array}{ll}t_j & j < i, \\0 & j = i, \\ t_{j-1} & j > i; \end{array}\right. s_i(t_j) = \left\{ \begin{array}{ll} t_j & j < i, \\ t_j + t_{j+1} & j = i, \\ t_{j+1} & j > i. \end{array} \right.
\end{equation}
	The application of a functor $G\colon\catname{Ring}\rightarrow\catname{Grp}$ to $R[\Delta^\bullet]$ yields a simplicial group $G(R[\Delta^\bullet])$, which in the context of $\mathbb{A}^1$-homotopy theory is usually called the {\it simplicial resolution} of $G$ and is denoted by $\mathrm{Sing}^{\mathbb{A}^1}_\bullet(G)(R)$. Recall also, that the $n$-th {\it unstable Karoubi--Villamayor K-functor attached to $G$} (denoted $\mathrm{KV}^G_n(R)$) is, by definition, the $n-1$-th simplicial homotopy group of $G(R[\Delta^\bullet])$ (cf.~\cite[\S~4.3]{AHW18}, \cite[\S~3]{Jar83}).

Recall from~\cite[\S~17]{May67} that the homotopy group $\pi_n(G)$ of a simplicial group $(G_\bullet, d_i, s_i)$ can be computed as $n$-th homology group of the normalized Moore complex
\[ 1 \leftarrow N_0(G) \xleftarrow{\partial_1} N_1(G) \xleftarrow{\partial_2} N_2(G) \xleftarrow{\partial_3} \ldots\,, \]
in which $N_n(G) = \cap_{i=1}^n\Ker(d_i) \trianglelefteq G_n$ and the differential $\partial_k$ is obtained from $d_0\colon G_k \to G_{k-1}$ by restriction, and $\Im(\partial_{n+1})\trianglelefteq\Ker(\partial_n)$ (see~\cite[Propositions~17.3--17.4]{May67}). In other words, $\pi_n(G) \cong \mathrm{H}_n(N_\bullet G) = \Ker(\partial_n) / \Im(\partial_{n+1})$.

\begin{proposition}\label{prop:pi1-StDelta} Let $\Phi$ be an arbitrary irreducible root system. Then for any commutative ring $R$ the simplicial groups $\E_\Lambda(\Phi,\,R[\Delta^\bullet])$ and $\St(\Phi,\,R[\Delta^\bullet])$ are connected. If, moreover, $\Phi$ has rank at least $2$ then the simplicial group $\St(\Phi, R[\Delta^\bullet])$ is simply-connected. \end{proposition}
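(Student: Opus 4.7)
The plan is to compute the homotopy groups via the normalized Moore complex: for any simplicial group $G_\bullet$ set $N_nG := \bigcap_{i=1}^{n} \Ker(d_i)$ and $\partial_n := d_0\big|_{N_nG}$, so that $\pi_n(G_\bullet) = \Ker(\partial_n)/\Im(\partial_{n+1})$. Under the identification $R[\Delta^1] \cong R[t]$ via $t = t_1$, the face maps $d_1, d_0$ correspond to evaluation at $t = 0$ and $t = 1$ respectively; similarly $R[\Delta^2] \cong R[s_1, s_2]$ via $s_i = t_i$, and $d_0, d_1, d_2$ correspond to the substitutions $(s_1, s_2) \mapsto (1-t, t)$, $(0, t)$, $(t, 0)$.

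For connectedness, given $g \in \St(\Phi, R)$, pick a word $g = \prod_{j=1}^{m} x_{\alpha_j}(r_j)$ and set $\tilde g(t) := \prod_j x_{\alpha_j}(r_j t) \in \St(\Phi, R[t])$. Then $d_1 \tilde g = \tilde g(0) = 1$, so $\tilde g \in N_1$, and $\partial_1 \tilde g = \tilde g(1) = g$; hence $\pi_0\St(\Phi, R[\Delta^\bullet]) = 1$. The same argument with $t_\alpha$ in place of $x_\alpha$ establishes connectedness of $\E_\Lambda(\Phi, R[\Delta^\bullet])$.

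For the simple connectedness of $\St(\Phi, R[\Delta^\bullet])$ when $\Phi$ has rank $\geq 2$, given a loop $h(t) \in \St(\Phi, R[t])$ with $h(0) = h(1) = 1$, we must construct $H(s_1, s_2) \in N_2$ with $\partial_2 H = H(1-t, t) = h(t)$. Fix a word $h = \prod_j x_{\alpha_j}(r_j(t))$ and decompose each polynomial as $r_j(t) = a_j(1-t) + b_j t + t(1-t) q_j(t)$, where $a_j = r_j(0)$, $b_j = r_j(1)$, $q_j \in R[t]$. Relation~\eqref{R1} factors $x_{\alpha_j}(r_j(t)) = x_{\alpha_j}(a_j(1-t)) \cdot x_{\alpha_j}(b_j t) \cdot x_{\alpha_j}(t(1-t) q_j(t))$ and motivates the ansatz
\[ \tilde H(s_1, s_2) := \prod_{j} \bigl[ x_{\alpha_j}(a_j s_1) \cdot x_{\alpha_j}(b_j s_2) \cdot x_{\alpha_j}(s_1 s_2 q_j(s_2)) \bigr]. \]
A direct verification shows that $\tilde H(1-t, t) = h(t)$, but restricted to the two axes $\tilde H$ becomes the ``linear loops'' $u(s_1) := \prod_j x_{\alpha_j}(a_j s_1)$ and $v(s_2) := \prod_j x_{\alpha_j}(b_j s_2)$, which from $h(0) = h(1) = 1$ satisfy $u(1) = v(1) = 1$; that is, they encode identities $\prod_j x_{\alpha_j}(a_j) = \prod_j x_{\alpha_j}(b_j) = 1$ in $\St(\Phi, R)$.

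The main obstacle is to correct $\tilde H$ by factors that kill $u, v$ on the axes while preserving triviality on the third edge $\{s_1 + s_2 = 1\}$. Since each linear loop encodes a relation derivable from~\eqref{R1}--\eqref{R3}, the strategy is to unpack an explicit derivation into a string of elementary reductions and lift each reduction to a 2-parameter family: applications of~\eqref{R2} lift trivially, while an application of~\eqref{R3} lifts via $[x_\alpha(\xi s_1),\, x_\beta(\eta s_2)] = x_{\alpha+\beta}(N_{\alpha\beta}\, \xi \eta\, s_1 s_2)$ in $\St(\Phi, R[s_1, s_2])$, which automatically vanishes on $\{s_1 = 0\} \cup \{s_2 = 0\}$. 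Stringing such lifts together produces correction factors whose product with $\tilde H$ supplies the required $H \in N_2$. The hypothesis that $\Phi$ has rank $\geq 2$ enters here crucially: it guarantees pairs of roots $\alpha, \beta$ with $\alpha + \beta \in \Phi$, without which~\eqref{R3} is vacuous and the lifting procedure degenerates---consistently with the failure of simple connectedness for $\Phi = \rA_1$.
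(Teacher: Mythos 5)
Your treatment of connectedness is correct and agrees with the paper's. The simple-connectedness argument, however, has a genuine gap at its decisive step. Your ansatz $\tilde H$ correctly reduces the problem to absorbing the two linear loops $u(s_1)=\prod_j x_{\alpha_j}(a_js_1)$ and $v(s_2)=\prod_j x_{\alpha_j}(b_js_2)$, but the correction factor $C$ you need must satisfy $C(s_1,0)=u(s_1)^{-1}$, $C(0,s_2)=v(s_2)^{-1}$ and $C(1-t,t)=1$; in particular it must be \emph{nontrivial} on the two axes. The building blocks you propose --- lifts of \eqref{R3} of the form $[x_\alpha(\xi s_1),x_\beta(\eta s_2)]=x_{\alpha+\beta}(N_{\alpha\beta}\xi\eta s_1s_2)$ --- vanish on both axes, and since restriction to an axis is a group homomorphism, no product of such elements or of their conjugates can produce the required boundary values. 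The plan of ``lifting a derivation of $\prod_j x_{\alpha_j}(a_j)=1$'' is likewise not carried out: a derivation inserts and cancels relators involving arbitrary ring elements and free conjugations, and there is no canonical way to promote each such move to a two-variable element with the three prescribed boundary restrictions. In effect you have reduced the proposition to the special case of linear loops without proving that case.

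What is actually needed --- and what the paper supplies --- is a structural description of $\Ker(\partial_1)=\{h\in\St(\Phi,R[t]) : h(0)=h(1)=1\}$. Using the identity $\overline{\St}(\Phi,R[t],I)=\Ker\bigl(\St(\Phi,R[t])\to\St(\Phi,R[t]/I)\bigr)$ from~\eqref{eq:relative-st-ker} (itself a nontrivial fact about relative Steinberg groups), together with Lemma~\ref{lem:fprod} (commutation of $\St(\Phi,-)$ with finite products) and the Chinese remainder theorem, one identifies $\Ker(\partial_1)$ with $\overline{\St}(\Phi,R[t],\langle t(t-1)\rangle)$, hence with the normal closure of the elements $x_\alpha\bigl(t(t-1)f(t)\bigr)$; these visibly lift to $N_2$ as $x_\alpha\bigl(t_1t_2f(t_2)\bigr)^{g(t_2)}$. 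Note also that your explanation of the rank hypothesis is off the mark: rank $\geq 2$ is needed not to make \eqref{R3} nonvacuous but to prove Lemma~\ref{lem:fprod}, i.e.\ to kill the cross-commutators $y_\alpha$ in $\St(\Phi,R\times R)$ via Lemma~\ref{lem:c-identities}; for $\Phi=\rA_1$ it is precisely this that fails.
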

\begin{proof}
In our computations we identify the ring $R[\Delta^n]$ with $R[t_1, \ldots, t_n]$ via $t_0 = 1 - \sum_{i=1}^n t_i$ and use~\eqref{eq:relative-st-ker} repeatedly.
The map $d_1\colon\St(\Phi,\,R[\Delta^1])\rightarrow\St(\Phi,\,R[\Delta^0])$ is given by $t_1\mapsto0$, therefore \[N_1\St(\Phi,\,R[\Delta^\bullet])=\Ker(d_1)=\overline{\St}(\Phi,\,R[t_1],\,\langle t_1\rangle),\]
and $\partial_1\colon N_1\St(\Phi,\,R[\Delta^\bullet])\rightarrow N_0\St(\Phi,\,R[\Delta^\bullet])=\St(\Phi,\,R)$ is induced by $d_0$, which sends $t_1$ to $t_0=1$. 
It is clear that $x_\alpha(r)$ of $\St(\Phi, R)$ is the image of $x_\alpha(rt_1)$ under $\partial_1$.
This shows that $\pi_0\St(\Phi, R[\Delta^\bullet])=1$, i.e. $\St(\Phi, R[\Delta^\bullet])$ is connected. The argument for $\E_\Lambda(\Phi, R[\Delta^\bullet])$ is identical.

Now let us verify the second assertion. Notice that the kernel of $\partial_1$ coincides with the intersection $\overline{\St}(\Phi,\,R[t_1],\,\langle t_1\rangle)\cap\overline{\St}(\Phi,\,R[t_1],\,\langle t_1-1 \rangle )$, or, what is the same, with the kernel of the homomorphism
$\St(\Phi,\,R[t_1])\rightarrow\St(\Phi,\,R)\times\St(\Phi,\,R)$
sending $g(t_1)$ to $\big(g(0),\,g(1)\big)$. By Lemma~\ref{lem:fprod} this homomorphism can be identified with the homomorphism $\St(\Phi, R[t_1]) \to \St(\Phi, R\times R)$ induced by the ring homomorphism of evaluation of $t_1$ at $(0, 1)$.

Since the ideals $\langle t_1 \rangle$ and $\langle t_1-1 \rangle$ are coprime, we can identify $R\times R$ with $R[t_1]/t_1(t_1-1)$ by the Chinese remainder theorem. Thus, we obtain that $\Ker(\partial_1)$ coincides with $\overline{\St}(\Phi,\,R[t_1],\,\langle t_1(t_1-1) \rangle)$ and, in particular, is generated by $x_\alpha\big(t_1(t_1-1)f(t_1)\big)^{g(t_1)}$, where $\alpha \in \Phi$, $f\in R[t_1]$, $g \in \St(\Phi, R[t_1])$.

Notice that the face maps $d_1, d_2\colon\mathrm{St}(\Phi,\,R[\Delta^2])\rightarrow\mathrm{St}(\Phi,\,R[\Delta^1])$ are given by evaluations ($t_1\mapsto0$, $t_2\mapsto t_1$) and ($t_1\mapsto t_1$, $t_2\mapsto0$), respectively. Thus, we obtain that \[N_2\mathrm{St}(\Phi,\,R[\Delta^\bullet])=\Ker(d_1)\cap\Ker(d_2)=\overline{\St}(\Phi,\,R[t_1,\,t_2],\,\langle t_1\rangle )\cap\overline{\St}(\Phi,\,R[t_1,\,t_2],\,\langle t_2\rangle).\]
The differential $\partial_2$ is induced by the face map $d_0 \colon \St(\Phi, R[\Delta^2]) \to \St(\Phi, R[\Delta^1])$. The latter, in turn, is given by the evaluation ($t_1 \mapsto 1-t_1$, $t_2 \mapsto t_1$).

It remains to see that the elements $x_{\alpha}\big(t_1t_2\,f(t_2)\big)^{g(t_2)}$ belong to $N_2\St(\Phi,\,R[\Delta^\bullet])$ and are mapped by $\partial_2$ onto the generating set of $\Ker(\partial_1)$ mentioned above. Thus, the normalized Moore complex for $\St(\Phi, R[\Delta^\bullet])$ is exact at $N_1$-term, which completes the proof of the proposition. \end{proof}

Let $f\colon G_\bullet\twoheadrightarrow Q_\bullet$ be a degreewise surjective morphism of simplicial groups (i.e. $f_n\colon G_n\to Q_n$ are surjective for all $n$). Recall from~\cite[Theorem~1.3]{Ina75} that in this situation the degreewise kernel $K_\bullet$ (i.e. the simplicial group given by $K_n = \Ker(f_n)$ with face and degeneracy maps induced from those of $G_\bullet$) yields a long exact sequence of groups
\begin{equation} \label{eq:simplicial-les} \begin{tikzcd} \ldots \ar{r} & \pi_{1}(G_\bullet) \ar{r} & \pi_1(Q_\bullet) \ar{r} & \pi_0(K_\bullet) \ar{r} & \pi_0(G_\bullet). \end{tikzcd} \end{equation}

\begin{theorem} \label{theorem:pi1-GRDelta}
 For $\Phi$, $\Lambda$, $k$ and $R$ as in~\cref{theorem:LP-for-K2} and $G = \mathrm{G}_\Lambda(\Phi, -)$ one has $\pi_1(G(R[\Delta^\bullet])) = \K_2^G(R)$.
\end{theorem}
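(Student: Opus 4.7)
The plan is to run the Inassaridze long exact sequence \eqref{eq:simplicial-les} twice, first for the degreewise exact sequence
\[ 1 \to \K_2^G(R[\Delta^\bullet]) \to \St(\Phi, R[\Delta^\bullet]) \to \E_\Lambda(\Phi, R[\Delta^\bullet]) \to 1 \]
coming from~\eqref{eq:main-exactSeq} (the quotient map is degreewise surjective since $\E_\Lambda$ is the image of $\pi$), and then for
\[ 1 \to \E_\Lambda(\Phi, R[\Delta^\bullet]) \to G(R[\Delta^\bullet]) \to \K_1^G(R[\Delta^\bullet]) \to 1, \]
using normality of $\E_\Lambda$ in $G$ in the rank $\geq 2$ case and the surjectivity of $G\to\K_1^G$ by definition.

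For the first sequence, Proposition~\ref{prop:pi1-StDelta} gives $\pi_0\St(\Phi,R[\Delta^\bullet]) = \pi_1\St(\Phi,R[\Delta^\bullet]) = 1$ and $\pi_0\E_\Lambda(\Phi,R[\Delta^\bullet]) = 1$, so the long exact sequence collapses to an isomorphism
\[ \pi_1\E_\Lambda(\Phi, R[\Delta^\bullet]) \xrightarrow{\cong} \pi_0\K_2^G(R[\Delta^\bullet]). \]
Since $\K_2^G$ is an abelian-group-valued functor (by the main result of~\cite{LSV20}), $\pi_0\K_2^G(R[\Delta^\bullet])$ is the cokernel of $d_0 - d_1\colon \K_2^G(R[t_1])\to \K_2^G(R)$, i.e.\ the quotient of $\K_2^G(R)$ by the subgroup generated by $\{g(1)g(0)^{-1}: g\in\K_2^G(R[t_1])\}$. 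By~\cref{theorem:LP-for-K2} the embedding $\K_2^G(R) \hookrightarrow \K_2^G(R[t_1])$ is an isomorphism, so every element of $\K_2^G(R[t_1])$ is a constant, $g(0) = g(1)$, and $\pi_0\K_2^G(R[\Delta^\bullet]) \cong \K_2^G(R)$.

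For the second sequence, the $\mathbb{A}^1$-invariance of $\K_1^G$ for Chevalley groups of rank $\geq 2$ over regular rings~\cite{Abe83} (applied successively to $R, R[t_1], R[t_1,t_2], \dots$, which are all regular) implies that $\K_1^G(R[\Delta^n]) = \K_1^G(R)$ with all face and degeneracy maps being identities. Thus $\K_1^G(R[\Delta^\bullet])$ is a constant simplicial group, whence $\pi_n\K_1^G(R[\Delta^\bullet]) = 0$ for all $n \geq 1$. Combined with $\pi_0\E_\Lambda(\Phi,R[\Delta^\bullet]) = 1$, the relevant fragment
\[ \pi_2\K_1^G(R[\Delta^\bullet]) \to \pi_1\E_\Lambda(\Phi, R[\Delta^\bullet]) \to \pi_1 G(R[\Delta^\bullet]) \to \pi_1\K_1^G(R[\Delta^\bullet]) \]
collapses to $\pi_1\E_\Lambda(\Phi, R[\Delta^\bullet]) \xrightarrow{\cong} \pi_1 G(R[\Delta^\bullet])$. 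Chaining the two isomorphisms yields $\pi_1(G(R[\Delta^\bullet])) \cong \K_2^G(R)$, as desired.

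The only real subtleties are bookkeeping: one must check that each quotient map in the short exact sequences of simplicial groups is degreewise surjective so that Inassaridze's long exact sequence applies, and one must verify that $\E_\Lambda \trianglelefteq G$ so that the second short exact sequence makes sense — both are standard under our rank and simple-connectedness hypotheses. All of the genuine input (the simply connectedness of $\St(\Phi, R[\Delta^\bullet])$, the $\mathbb{A}^1$-invariance of $\K_2^G$, and the $\mathbb{A}^1$-invariance of $\K_1^G$) has already been established.
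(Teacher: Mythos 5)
Your proof is correct and follows essentially the same route as the paper: both apply Inassaridze's exact sequence~\eqref{eq:simplicial-les} to the two degreewise surjections $\St(\Phi,R[\Delta^\bullet])\twoheadrightarrow\E_\Lambda(\Phi,R[\Delta^\bullet])$ and $G(R[\Delta^\bullet])\twoheadrightarrow\K_1^G(R[\Delta^\bullet])$, then feed in Proposition~\ref{prop:pi1-StDelta}, the $\mathbb{A}^1$-invariance of $\K_1^G$, and Theorem~\ref{theorem:LP-for-K2} (which makes $\K_2^G(R[\Delta^\bullet])$ discrete). The only differences are the order of the two steps and your more explicit computation of $\pi_0$ of the constant simplicial group $\K_2^G(R[\Delta^\bullet])$.
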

\begin{proof}
First of all, notice that by the homotopy invariance for $\K_1$ (see e.\,g.~\cite[Theorem~1.1]{Sta20}) the simplicial group $\K_1^G(R[\Delta^\bullet])$ is discrete. Applying the exact sequence~\eqref{eq:simplicial-les} to the canonical morphism $G(R[\Delta^\bullet]) \twoheadrightarrow \K_1^G(R[\Delta^\bullet])$, we obtain that $\pi_1(G( R[\Delta^\bullet])) \cong \pi_1(\E_\Lambda(\Phi, R[\Delta^\bullet]))$.

Now consider the simplicial map $\mathrm{st}_\bullet \colon \St(\Phi, R[\Delta^\bullet]) \twoheadrightarrow \E_\Lambda(\Phi, R[\Delta^\bullet])$ given by canonical projections $\mathrm{st}$ in each degree. The application of~\eqref{eq:simplicial-les} yields an exact sequence of groups
\[
\pi_1\bigl(\St(\Phi, R[\Delta^\bullet])\bigr) \to \pi_1\bigl(\E_\Lambda(\Phi, R[\Delta^\bullet])\bigr) \to \pi_0\bigl(\K_2^G( R[\Delta^\bullet])\bigr) \to \pi_0\bigl(\St(\Phi, R[\Delta^\bullet])\bigr).
\]
 Proposition~\ref{prop:pi1-StDelta} implies that the first and the last groups in this exact sequence are trivial, so the two central groups are isomorphic. On the other hand, \cref{theorem:LP-for-K2} implies that $\K_2^G(R[\Delta^\bullet])$ is discrete, so $\pi_0(\K_2^G(R[\Delta^\bullet])) = \K_2^G(R)$, as claimed.
\end{proof}

\begin{corollary}[Corollary~\ref{cor:motivic-pi1}] \label{cor:motivic-pi1-2} Let $\Phi$, $\Lambda$ and $k$ be as in~\cref{theorem:LP-for-K2}. Then for any smooth $k$-algebra $R$ and $G = \mathrm{G}_{\Lambda}(\Phi, -)$ one has
\[ \pi_1^{\mathbb{A}^1}(G)(R) := \mathrm{Hom}_{\mathscr{H}_{\bullet}(k)}(S^1 \wedge \mathrm{Spec}(R)_+, G) = \mathrm{KV}_2^{G}(R) = \K_2^G(R),\]
where $\mathscr{H}_\bullet(k)$ denotes the pointed unstable $\mathbb{A}^1$-homotopy category over $k$.
\end{corollary}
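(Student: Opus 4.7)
The plan is to decompose the chain $\pi_1^{\mathbb{A}^1}(G)(R) = \mathrm{KV}_2^G(R) = \K_2^G(R)$ into its two halves, handled by very different means. The second equality should be essentially tautological given the work already done; the first equality requires importing the representability machinery of~\cite{AHW18}.

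For the equality $\mathrm{KV}_2^G(R) = \K_2^G(R)$, I unwind definitions. Recall from Section~\ref{sec:K2-as-pi-1} that $\mathrm{KV}_2^G(R)$ is defined to be $\pi_1(G(R[\Delta^\bullet]))$, i.e. $\pi_1(\mathrm{Sing}^{\mathbb{A}^1}(G)(R))$. This is exactly the simplicial homotopy group computed in Theorem~\ref{theorem:pi1-GRDelta}, which identifies it with $\K_2^G(R)$ under the hypotheses on $\Phi$, $\Lambda$ and $k$ of~\cref{theorem:LP-for-K2}. So no further work is needed here.

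For the equality $\pi_1^{\mathbb{A}^1}(G)(R) = \pi_1(\mathrm{Sing}^{\mathbb{A}^1}(G)(R))$, the plan is to apply the main representability results of Asok--Hoyois--Wendt. Their framework furnishes a criterion under which $\mathrm{Sing}^{\mathbb{A}^1}(\mathcal{F})$ is an $\mathbb{A}^1$-local fibrant replacement of $\mathcal{F}$ when evaluated on smooth affine $k$-schemes, namely that both $\mathcal{F}$ and $\mathrm{Sing}^{\mathbb{A}^1}(\mathcal{F})$ satisfy Nisnevich descent on smooth affines (the affine Brown--Gersten property). Under this condition one has, for any $k$-smooth $R$, a canonical identification $\pi_n^{\mathbb{A}^1}(\mathcal{F})(R) \cong \pi_n(\mathcal{F}(R[\Delta^\bullet]))$ as sets/groups, which we apply to $\mathcal{F} = G$ and $n = 1$.

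The main obstacle is thus verifying the affine Brown--Gersten property for $\mathrm{Sing}^{\mathbb{A}^1}(G)$, since for the representable presheaf $G$ itself this holds automatically. For a simplicial sheaf of groups it is enough to check Nisnevich excision for the homotopy sheaves $\pi_0$ and $\pi_1$. The sheaf $\pi_0(\mathrm{Sing}^{\mathbb{A}^1}(G))$ is the $\mathbb{A}^1$-invariant unstable $\K_1^G$-functor, and its Nisnevich excision is part of Stavrova's results~\cite{Sta14, Sta20}. The sheaf $\pi_1(\mathrm{Sing}^{\mathbb{A}^1}(G))$ is (by Theorem~\ref{theorem:pi1-GRDelta}) the functor $\K_2^G$, whose Nisnevich excision is precisely the axiom~\ref{LPP} verified in Section~\ref{sec:patching}. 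Together with the $\mathbb{A}^1$-invariance supplied by~\cref{theorem:LP-for-K2}, this puts us in the setting of~\cite{AHW18} and concludes the proof.
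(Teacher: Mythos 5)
Your second equality, $\mathrm{KV}_2^G(R) = \K_2^G(R)$, is handled exactly as in the paper: it is Theorem~\ref{theorem:pi1-GRDelta} (resting on Proposition~\ref{prop:pi1-StDelta} and \cref{theorem:LP-for-K2}), and no further work is needed there.

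The gap is in the first equality. You propose to show that $\mathrm{Sing}^{\mathbb{A}^1}(G)$ satisfies the affine Brown--Gersten property by ``checking Nisnevich excision for the homotopy sheaves $\pi_0$ and $\pi_1$''. That reduction is not valid: Nisnevich descent for a simplicial presheaf means that each distinguished square becomes homotopy cartesian, which is not detected degreewise on homotopy presheaves and in any case would involve \emph{all} homotopy groups of $G(R[\Delta^\bullet])$ --- i.e. all the higher Karoubi--Villamayor groups $\mathrm{KV}_n^G$, $n\geq 3$ --- about which nothing is proved in this paper. Moreover, the inputs you cite are not of the right shape even in low degrees: what Section~\ref{sec:patching} establishes for $\K_2^G$ is axiom~\ref{LPP}, a \emph{weak} excision statement for domains (surjectivity of a map between kernels), which is far from Nisnevich descent of a sheaf; and Stavrova's results supply $\mathbb{A}^1$-invariance of $\K_1^G$ and of $\mathrm H^1_{\mathrm{Nis}}(-,G)$, not an excision property of $\pi_0(\mathrm{Sing}^{\mathbb{A}^1}(G))$. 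The paper sidesteps all of this by exploiting that $G$ is a smooth affine \emph{group} scheme: by \cite[Corollary~5.4]{Sta20} the functor $\mathrm H^1_{\mathrm{Nis}}(-,G)$ is $\mathbb{A}^1$-invariant on regular $k$-algebras, and \cite[Theorem~2.4.2]{AHW18} then asserts precisely that such a $G$ is $\mathbb{A}^1$-naive, giving $\pi_1^{\mathbb{A}^1}(G)(R) = \pi_1(G(R[\Delta^\bullet]))$ for smooth affine $R$ with no descent verification for $\mathrm{Sing}^{\mathbb{A}^1}(G)$ required. To repair your argument, replace the proposed verification by an appeal to that theorem (whose hypothesis concerns torsors, not the homotopy sheaves of the singular construction).
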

\begin{proof}
 By~\cite[Corollary~5.4]{Sta20} for every regular $k$-algebra $R$ one has \[\mathrm H^1_\mathrm{Nis}(R, G) = \mathrm H^1_\mathrm{Nis}(R[t], G),\] consequently by~\cite[Theorem~2.4.2]{AHW18} the scheme $G$ defines an $\mathbb{A}^1$-naive presheaf on $\mathrm{Sm}_{k}^{\mathrm{aff}}$ in the sense of~\cite[Definition~2.1.1]{AHW18}.
 In particular, one has $\pi_1^{\mathbb{A}^1}(G)(R) = \pi_1(G(R[\Delta^\bullet]))$, the latter group being isomorphic to $\K_2^G(R)$ by the above theorem.
\end{proof} 

\begin{corollary}
\label{cor:+=A1}
Let $\Phi$, $\Lambda$ and $k$ be as in~\cref{theorem:LP-for-K2}. Then for any regular ring $R$ containing a field $k$ and $G = \mathrm{G}_{\Lambda}(\Phi, -)$ one has
$$
\K_{2}^{G,Q}(R) = \mathrm{KV}_2^G(R),
$$
where $\K_2^{G, Q}(R)$ denotes the $\mathrm K_2$-group defined via Quillen's $+$-construction, see~(\ref{eq:H2-K2}) or formula~\cite[(2.21)]{LS20}. In other words, the definitions of the unstable $\K_2$-functors a la Quillen and a la Karoubi--Villamayor agree for regular rings containing a field.
\end{corollary}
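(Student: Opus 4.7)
The plan is to chain together three identifications of the group $\K_2^G(R)$ that have already been established in the paper. Under the hypotheses on $\Phi$, $\Lambda$, $k$ and $R$, both $\K_2^{G,Q}(R)$ and $\mathrm{KV}_2^G(R)$ will coincide with the unstable $\K_2$-group defined via the Steinberg presentation in~\eqref{eq:main-exactSeq}, so the corollary reduces to invoking results already in hand.

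First, I would invoke equation~\eqref{eq:H2-K2} of the introduction: by~\cite[Theorem~5.3]{St71} the first two integral homology groups of $\St(\Phi, R)$ vanish under our rank assumption on $\Phi$, and by the main theorem of~\cite{LSV20} the group $\K_2^G(R)$ is central in $\St(\Phi, R)$ and hence abelian. Feeding these facts into the standard comparison from~\cite[\S IV.1]{Kbook} yields $\K_2^{G,Q}(R) \cong \K_2^G(R)$ for any commutative $R$ satisfying the assumptions on $\Phi$; no regularity is needed at this step.

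Second, by the definition of the unstable Karoubi--Villamayor functor recalled just before Proposition~\ref{prop:pi1-StDelta}, one has $\mathrm{KV}_2^G(R) = \pi_1\bigl(G(R[\Delta^\bullet])\bigr)$. Third, Theorem~\ref{theorem:pi1-GRDelta} identifies $\pi_1\bigl(G(R[\Delta^\bullet])\bigr)$ with $\K_2^G(R)$ for any regular $k$-algebra $R$ falling under~\cref{theorem:LP-for-K2}. Chaining the three identifications completes the proof. Because every ingredient has already been built, there is no genuine obstacle; the only thing to check is that the rank and characteristic hypotheses needed for~\eqref{eq:H2-K2}, for~\cref{theorem:LP-for-K2}, and for Theorem~\ref{theorem:pi1-GRDelta} are consistent with those imposed in the statement, which is immediate.
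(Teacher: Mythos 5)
Your proposal is correct and is exactly the argument the paper intends (the corollary is left without an explicit proof precisely because it is the concatenation of~\eqref{eq:H2-K2}, the definition of $\mathrm{KV}_2^G$, and Theorem~\ref{theorem:pi1-GRDelta}). Your remark that the identification $\K_2^{G,Q}(R)\cong\K_2^G(R)$ needs no regularity, only the rank hypotheses feeding into Stein's vanishing theorem and the centrality result of~\cite{LSV20}, is also accurate.
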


\section{Nisnevich glueing for \texorpdfstring{$\K_2(\Phi, R)$}{K2(Ф,R)}} \label{sec:patching}
The main result of this section is the following glueing theorem for Steinberg groups.
\begin{theorem}\label{glueing}
Let $\iota\colon B \hookrightarrow A$ be an embedding of integral domains, $h\in B \setminus \{0\}$ be such that $B / h = A / h$. Denote by $\overline\iota\colon B_h\hookrightarrow A_h$ the induced embedding of localizations. Let $\Phi$ be an arbitrary simply-laced root system of rank at least $3$.
Then $\St(\Phi, B)$ surjects onto the pullback of the diagram 
$\St(\Phi, A) \xrightarrow{\lambda_h} \St(\Phi, A_{h})\xleftarrow{\overline{\iota}}\St(\Phi, B_h)$:

 \[ \begin{tikzcd}
 \St(\Phi, B)\ar[two heads, bend left=5]{rd}\ar[bend left=10]{rrd}{\lambda_h}\ar["\iota"', bend right=20]{rdd}&&\\
&\St(\Phi, B_h)\times_{\St(\Phi, A_{h})}\St(\Phi, A)\ar{r}\ar{d}&\St(\Phi, B_h)\ar{d}{\overline{\iota}}\\
&\St(\Phi, A)\ar{r}{\lambda_{h}}&\St(\Phi, A_{h}).
\end{tikzcd} \]
\end{theorem}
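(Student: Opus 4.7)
My plan is to reduce the theorem to a successive-approximation problem in the $h$-adic filtration and then invoke the pro-group machinery of~\cite{Vor1, LSV20}.

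The first step is to bootstrap the hypothesis $B/hB \cong A/hA$ to all powers of $h$. Since $h$ is a non-zero-divisor in each of the domains $B$ and $A$, multiplication by $h^n$ identifies $R/hR$ with $h^n R/h^{n+1} R$ for $R \in \{B, A\}$. A five-lemma induction applied to the natural ladder with exact rows $0 \to h^n R/h^{n+1} R \to R/h^{n+1}R \to R/h^n R \to 0$ yields isomorphisms $B/h^n B \xrightarrow{\sim} A/h^n A$ for every $n \geq 1$. By~\eqref{eq:relative-st-ker} these ring isomorphisms descend to group isomorphisms $\St(\Phi, B)/\overline{\St}(\Phi, B, h^n B) \xrightarrow{\sim} \St(\Phi, A)/\overline{\St}(\Phi, A, h^n A)$ for every $n$.

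Given a pair $(g, g')$ in the pullback, I would construct a coherent sequence $\tilde g_n \in \St(\Phi, B)$ with $\iota(\tilde g_n) g^{-1} \in \overline{\St}(\Phi, A, h^n A)$ and $\tilde g_{n+1} \tilde g_n^{-1} \in \overline{\St}(\Phi, B, h^n B)$, so that $(\tilde g_n)$ defines a compatible pro-system. For the base case $\tilde g_1$, write $g$ as a product of root-subgroup generators $x_{\alpha_i}(a_i)$, use $A = B + hA$ to pick $\tilde a_i \in B$ with $a_i - \tilde a_i \in hA$, and set $\tilde g_1 := \prod x_{\alpha_i}(\tilde a_i)$. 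For the inductive step, the task is to lift elements of $\overline{\St}(\Phi, A, h^n A)$ into the image of $\St(\Phi, B)$ modulo $\overline{\St}(\Phi, A, h^{n+1} A)$. The key input is that the quotient $\overline{\St}(\Phi, A, h^n A)/\overline{\St}(\Phi, A, h^{n+1}A)$ is generated by the classes of root-subgroup generators $x_\alpha(h^n a)$---this reduction, for simply-laced $\Phi$ of rank at least $3$, relies on systematic use of the Chevalley commutator formulas together with Lemma~\ref{lem:c-identities} and Lemma~\ref{lem:fprod} (the rank condition ensures that every root admits a suitable decomposition $\alpha = \beta + \gamma$ with $\beta, \gamma \in \Phi$). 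Once such a generator is isolated, the splitting $a = b + ha''$ gives $x_\alpha(h^n a) = x_\alpha(h^n b) \cdot x_\alpha(h^{n+1} a'')$, with the first factor coming from $\St(\Phi, B)$ and the second lying in the next layer of the filtration.

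The main obstacle is the passage from the compatible pro-system $(\tilde g_n)$ in the pro-group $\widehat{\St}(\Phi, B, h) := \varprojlim_n \St(\Phi, B)/\overline{\St}(\Phi, B, h^n B)$ to an honest element $\tilde g \in \St(\Phi, B)$ matching both $g$ and $g'$. The element $g'$ becomes essential here: the compatibility $\lambda_h(g) = \overline\iota(g')$ supplies exactly the localization information required to glue the limiting pro-element to a single element of $\St(\Phi, B)$. Verifying that this descent is unobstructed---i.e., that there are no higher Mayer--Vietoris obstructions to lifting the pair $(\hat g, g')$ with $\hat g := \varprojlim \tilde g_n$ and $\lambda_h(\hat g) = g'$ back to $\St(\Phi, B)$---is the core technical step, and is precisely what the pro-Steinberg-group formalism developed by the third author in~\cite{Vor1} (and further applied in~\cite{LSV20}) was designed to handle.
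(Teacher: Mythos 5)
Your overall strategy --- approximate the given pair $h$-adically and then pass to a limit --- is not the paper's strategy, and it contains a gap at exactly the point where you defer to the pro-group formalism. The decisive problem is the final descent step. Your construction produces, at best, an element of the completion $\varprojlim_n \St(\Phi, B)/\overline{\St}(\Phi, B, h^nB)$, whereas the theorem demands an honest element of $\St(\Phi, B)$. You assert that the compatibility $\lambda_h(g)=\overline\iota(g')$ ``supplies exactly the localization information required to glue the limiting pro-element to a single element'' and that the formalism of~\cite{Vor1, LSV20} ``was designed to handle'' this descent. It was not: the pro-objects in that formalism are the systems $\{\St(\Phi, h^nR)\}_n$ of Steinberg groups of the non-unital rings $h^nR$, not quotients by relative subgroups, and the formalism is used to produce, for each $g\in\St(\Phi,R_S)$, a conjugation homomorphism $c_g\colon\St(\Phi,s_gR)\to\St(\Phi,R)$ defined at a single \emph{finite} level $s_g$. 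No inverse limit is ever formed, and the map $\St(\Phi,B)\to\varprojlim_n \St(\Phi,B)/\overline{\St}(\Phi,B,h^nB)$ has no reason to be surjective (or injective); controlling the $\varprojlim$/$\varprojlim^1$ obstruction you allude to is as hard as the original statement. The paper instead avoids limits entirely: it forms the orbit set $V$ of the $\St(\Phi,B)$-action on $\St(\Phi,B_h)\times\St(\Phi,A)$, uses the homomorphisms $c_g$ to define operators $T_\alpha(c/h^s)$ on $V$ via a single decomposition $c=ah^k+b$ with $k$ finite but large, verifies the Steinberg relations for these operators, and concludes that $V$ is a principal homogeneous set under $\St(\Phi,A_h)$, which yields the exactness of the Mayer--Vietoris-type sequence.

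A secondary but genuine issue is your key intermediate claim that $\overline{\St}(\Phi, A, h^nA)/\overline{\St}(\Phi, A, h^{n+1}A)$ is generated by the classes of the $x_\alpha(h^na)$. This does not follow from Lemma~\ref{lem:c-identities} as you suggest: that lemma gives congruences modulo $\overline{\St}(\Phi,R,AB)$ for a product of two ideals, and the only instance involving the ambient ring ($A=h^nR$, $B=R$) yields a congruence modulo $\overline{\St}(\Phi,R,h^nR)$ itself, which says nothing modulo $h^{n+1}$. The troublesome conjugates are $x_\alpha(h^na)^{x_{-\alpha}(c)}$ with $c\in A$ arbitrary, i.e.\ the elements $y_\alpha(h^na,c)$, and taming these is precisely the content of the Tulenbaev-type ``conjugation calculus'' that the pro-group construction of $\mathrm{conj}$ packages; it cannot be cited as an easy consequence of the displayed lemmas. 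Finally, note that your scheme uses the second component of the pullback pair only at the very end, whereas the interaction between the two components is the substance of the theorem; a correct argument must build that interaction in from the start, as the paper's orbit-set construction does.
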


The above result generalizes~\cite[Proposition~1.4]{Tu83}, which is formulated for the Zariski topology and the special case $\Phi=\rA_{\geq 4}$.
The analogues of the above theorem for the functor $\K_1$ have also been obtained by E.~Abe and A.~Stavrova, cf.~\cite[Lemma~3.7]{Abe83}, \cite[Lemma~3.4]{Sta14}.
The analogous result for projective modules is the ``descent lemma'' \cite[Lemma~4.7]{Bh99} which is attributed to W.~L\"utkebohmert, H.~Lindel and M.~Kumar. 

\begin{corollary}
In the notation of Theorem~\ref{glueing} the natural map
$$
\mathrm{Ker}\big(\St(\Phi,\,B)\rightarrow\St(\Phi,\,B_h)\big)\rightarrow\mathrm{Ker}\big(\St(\Phi,\,A)\rightarrow\St(\Phi,\,A_h)\big)
$$
is surjective. In particular the functors $\St(\Phi, -)$, $\K_2(\Phi, -)$ satisfy weak affine Nisnevich excision for domains in the sense of Theorem~\ref{lpb}\,(\ref{LPP}).
\end{corollary}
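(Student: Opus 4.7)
The plan is to deduce both surjectivity statements directly from the glueing property established in \cref{glueing}. First I would handle the Steinberg group case by picking an arbitrary $a \in \Ker\bigl(\St(\Phi, A) \to \St(\Phi, A_h)\bigr)$ and considering the pair $(1, a) \in \St(\Phi, B_h) \times \St(\Phi, A)$. Since $\overline{\iota}(1) = 1 = \lambda_h(a)$ in $\St(\Phi, A_h)$, this pair lies in the pullback, so \cref{glueing} produces an element $b \in \St(\Phi, B)$ mapping to $(1, a)$. By construction $b$ satisfies $\lambda_h(b) = 1$ in $\St(\Phi, B_h)$ and $\iota(b) = a$ in $\St(\Phi, A)$, hence $b \in \Ker\bigl(\St(\Phi, B) \to \St(\Phi, B_h)\bigr)$ is a preimage of $a$, finishing the argument for $\St(\Phi, -)$.

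For the $\K_2$-variant it remains to verify that the $b$ produced above actually lies in $\K_2(\Phi, B)$ whenever the chosen $a$ lies in $\K_2(\Phi, A)$. Here I would invoke the fact that $\Gsc(\Phi, -)$ is represented by an affine group scheme, so the injection $\iota\colon B \hookrightarrow A$ induces an injection $\Gsc(\Phi, B) \hookrightarrow \Gsc(\Phi, A)$ on groups of points. The image of $b$ under the projection $\St(\Phi, B) \to \Gsc(\Phi, B)$ maps, through this injection, to the image of $\iota(b) = a$ in $\Gsc(\Phi, A)$, which is trivial by the assumption $a \in \K_2(\Phi, A)$. Hence $b$ projects to $1$ in $\Gsc(\Phi, B)$, i.e.\ $b \in \K_2(\Phi, B)$. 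Combined with the already-established $\lambda_h(b) = 1$ in $\St(\Phi, B_h)$, and hence in $\K_2(\Phi, B_h)$, this exhibits the desired preimage in $\Ker\bigl(\K_2(\Phi, B) \to \K_2(\Phi, B_h)\bigr)$.

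The translation into the language of axiom~\ref{LPP} of \cref{lpb} is then immediate: for both functors $F \in \{\St(\Phi, -),\, \K_2(\Phi, -)\}$ the displayed surjectivity on kernels is exactly the weak affine Nisnevich excision property for domains, applied to the Nisnevich square $(B, A, B_h, A_h)$ specified in the hypotheses of~\ref{LPP}. I expect no real obstacle here, since the entire corollary reduces to bookkeeping once \cref{glueing} is granted; the only substantive ingredient beyond that theorem is the elementary representability observation that $G(\iota)$ is injective whenever $\iota$ is.
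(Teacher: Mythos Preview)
Your proposal is correct and follows essentially the same argument as the paper: lift the pair $(1,a)$ through the pullback surjection of \cref{glueing}, then use injectivity of $\Gsc(\Phi,B)\to\Gsc(\Phi,A)$ (which the paper also invokes, without spelling out the representability reason) to conclude that the lift lies in $\K_2(\Phi,B)$.
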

\begin{proof}
Take $x\in\mathrm{Ker}\big(\St(\Phi,\,A)\rightarrow\St(\Phi,\,A_h)\big)$, then by Theorem~\ref{glueing} there exists a pre-image $y\in\St(\Phi,\,B)$ of the element $(1,\,x)\in\St(\Phi, B_h)\times_{\St(\Phi, A_{h})}\St(\Phi, A)$, i.e. $\lambda_h(y)=1$ and $\iota(y)=x$. In other words, $y$ is a pre-image of $x$ from $\mathrm{Ker}\big(\St(\Phi,\,B)\rightarrow\St(\Phi,\,B_h)\big)$. Since $\iota\colon\mathrm{G_{sc}}(\Phi,\,B)\rightarrow\mathrm{G_{sc}}(\Phi,\,A)$ is injective, we also get the assertion for $\mathrm K_2(\Phi,\,-)$.
\end{proof}

\subsection{Plan of the proof}
\label{plan}

The assertion of Theorem~\ref{glueing} is equivalent to the exactness of the sequence of pointed sets 
\begin{equation}\label{stavrova-sequence} \begin{tikzcd} \St(\Phi, B) \ar{rrr}{g \mapsto \big(\lambda_h(g)^{-1},\, \iota(g)\big)} & &  & \St(\Phi, B_h) \times \St(\Phi, A) \ar{rrrr}{(g_1,g_2) \mapsto \overline{\iota}(g_1)\cdot\lambda_{h}(g_2)} & & & & \St(\Phi, A_{h}) \end{tikzcd} \end{equation}
in the central term. 
\begin{definition}
Consider the natural action of $\St(\Phi, B)$ on $\St(\Phi, B_h) \times \St(\Phi, A)$ given by
\[g \star (u, v) = (u \lambda_h(g)^{-1}, \iota(g) v),\text{ where }g \in \St(\Phi, B),\ u \in \St(\Phi, B_h),\ v \in \St(\Phi, A),\]
and let $V$ denote the orbit set of this action. We use the notation $[u, v]\in V$ for the orbit corresponding to a pair $(u, v) \in \St(\Phi, B_h) \times \St(\Phi, A)$.
\end{definition}

Consider the natural map $V \to \St(\Phi, A_h)$ induced by $\overline{\iota}$ and $\lambda_h$. If we show that this map is injective the exactness of~\eqref{stavrova-sequence} would follow. The main ingredient in the proof of this injectivity is the following

\begin{proposition}
\label{action}
There exists a well-defined action $\cdot$ of $\St(\Phi, A_{h})$ on $V$ such that the map $V \to \St(\Phi, A_{h})$ is $\St(\Phi, A_{h})$-equivariant (here $\St(\Phi, A_{h})$ acts on itself by left translation). Moreover, for any $u\in\St(\Phi, B_h)$ and $v\in\St(\Phi, A)$ one has 
\begin{align}
\label{additional}
\lambda_{h}(v)\cdot[1,\,1]=[1,\,v]\quad\text{ and }\quad\overline\iota(u)\cdot[1,\,v]=[u,\,v].
\end{align}
\end{proposition}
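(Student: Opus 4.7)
The strategy is to construct the action by specifying it on a generating set of $\St(\Phi, A_h)$ and then verifying compatibility with the defining relations, using the pro-group machinery of~\cite{Vor1,LSV20} to deal with the subtleties. First I would record the arithmetic consequence of $B/hB = A/hA$: by induction, $A = B + h^n A$ and $B \cap h^n A = h^n B$ for every $n\geq 1$. The first identity combined with~\eqref{R1} shows that every generator $x_\alpha(a/h^n)$ of $\St(\Phi, A_h)$ can be written as a product of an element of $\overline{\iota}(\St(\Phi, B_h))$ and an element of $\lambda_h(\St(\Phi, A))$; in particular these two subgroups jointly generate $\St(\Phi, A_h)$, and the second identity controls the ambiguity in such decompositions.

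Next I would try to build the action out of two partial actions. The partial action of $\St(\Phi, B_h)$ by $\overline{\iota}(u_0)\cdot[u,v] := [u_0 u, v]$ is immediately well-defined on orbits, since left multiplication in the first factor commutes with the defining right action of $\St(\Phi, B)$ on pairs $(u, v)$. By contrast, naive left multiplication on the second factor does not descend from $\St(\Phi, A)$ to $V$ (it fails to commute with the $\iota$-part of the orbit action), so one needs a correction. To supply it I would pass to the $h$-adic pro-completion of Steinberg groups developed in~\cite{Vor1,LSV20}: since $B/h^n B \cong A/h^n A$, the corresponding pro-completions of $\St(\Phi, B)$ and $\St(\Phi, A)$ are canonically identified, and this identified pro-group carries compatible homomorphisms both to $\overline{\iota}(\St(\Phi, B_h))$ and to $\lambda_h(\St(\Phi, A))$. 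Lifting elements of $\lambda_h(\St(\Phi, A))$ through the tower of congruence subgroups provides the corrections needed to make the $\St(\Phi, A)$-action well-defined on $V$.

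Combining the two corrected partial actions yields an action of a group generated by $\overline{\iota}(\St(\Phi, B_h))$ and $\lambda_h(\St(\Phi, A))$ on $V$, and this is the main technical content. The hard step, which I expect to be the main obstacle, is to verify that this action factors through $\St(\Phi, A_h)$; equivalently, that it respects the Chevalley commutator relations~\eqref{R2}--\eqref{R3} between generators coming from the two subgroups. This reduces to an $h$-adic computation using the congruences of~\cref{lem:c-identities} together with the Tulenbaev-type lifting and the centrality results of~\cite{LSV20}, and this is precisely where the hypotheses that $\Phi$ is simply-laced of rank $\geq 3$ come in: they are needed so that the pro-group machinery of~\cite{Vor1} applies and so that mixed commutators can be pushed into the $\St(\Phi, B)$-orbit direction.

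Finally the additional identities~\eqref{additional} should come out for free from the construction by specialization: applying the $\lambda_h(\St(\Phi, A))$-action to $[1,1]$ and taking the trivial decomposition yields $[1,v]$, and applying the $\overline{\iota}(\St(\Phi, B_h))$-action to $[1,v]$ yields $[u,v]$. Equivariance of $V \to \St(\Phi, A_h)$, $[u,v]\mapsto \overline{\iota}(u)\lambda_h(v)$, reduces to the two generating cases and is then automatic.
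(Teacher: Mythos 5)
Your proposal follows essentially the same route as the paper's proof: exploit $A=B+h^kA$ and $B\cap h^kA=h^kB$ to split each generator $x_\alpha(c/h^s)$ of $\St(\Phi,A_h)$ into a $B_h$-part and a congruence part, use the pro-group conjugation homomorphism $\mathrm{conj}$ of~\cite{LSV20} to correct the naive left translation on the second factor of $V$, verify the Steinberg relations for the resulting operators, and obtain~\eqref{additional} by specializing to trivial decompositions. Two points, however, need repair. First, the pro-groups $\St^{(\infty)}(\Phi,B)$ and $\St^{(\infty)}(\Phi,A)$ that carry the conjugation action are built on the ideals $h^nB$ and $h^nA$, not on the quotients $B/h^nB\cong A/h^nA$, and they are \emph{not} canonically identified (one only has $h^nA\cap B=h^nB$, not $h^nA=h^nB$). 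What the argument actually requires is the morphism $\iota^{(\infty)}$ between them together with the compatibility $\iota\circ c_g=c_{\overline\iota(g)}\circ\iota$ of Corollary~\ref{vorcor}, i.e. functoriality of $\mathrm{conj}$ (Proposition~\ref{prop:functoriality}); an identification of $h$-adic completions would not by itself produce the needed homomorphisms $c_g\colon\St(\Phi,h^nA)\to\St(\Phi,A)$ landing back in $\St(\Phi,A)$.

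Second, your reduction of ``the action factors through $\St(\Phi,A_h)$'' to ``the commutator relations between generators coming from the two subgroups hold'' is not an equivalence: the kernel of $\St(\Phi,B_h)*\St(\Phi,A)\to\St(\Phi,A_h)$ is not generated by such relations — that it is small is essentially what the whole theorem is about, so this step as stated is circular. The correct reduction, and the one the paper uses, is to define one operator $T_\alpha(c/h^s)$ for each Steinberg \emph{generator} of $\St(\Phi,A_h)$ via the decomposition $c=ah^k+b$ and then verify all of \eqref{R1}--\eqref{R3} for these operators; note that \eqref{R1} is a genuine step here (not subsumed by the commutator relations), since it mixes two different decompositions and requires enlarging $k$ so that $c_{\overline\iota(u^{-1})x_\alpha(-b/h^s)}$ and $c_{\overline\iota(u^{-1})}$ agree on the relevant elements. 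One also needs a separate well-definedness lemma: independence of the decomposition of $c$, of the chosen representative of $\mathrm{conj}(u)$, and of the orbit representative $(u,v)$. Finally, the verification of the relations rests on the explicit formulas for $\mathrm{conj}(x_\alpha(u))\circ x_\beta$ and on the freedom to take $k$ arbitrarily large, not on Lemma~\ref{lem:c-identities} or on centrality of $\K_2$; the simply-laced, rank $\geq 3$ hypotheses enter only through the existence of $\mathrm{conj}$.
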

\begin{corollary}
\label{corollary-in-the-end}
The natural map $V\to \St(\Phi, A_{h})$ is bijective, i.e. $V$ is a principal homogeneous set for $\St(\Phi, A_{h})$.
\end{corollary}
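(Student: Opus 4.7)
The plan is to deduce Corollary~\ref{corollary-in-the-end} formally from Proposition~\ref{action}. The key observation is that, by iterating the two identities of~\eqref{additional}, every class in $V$ can be written as a single $\St(\Phi, A_h)$-translate of the basepoint $[1,1]$: for arbitrary $u\in\St(\Phi, B_h)$ and $v\in\St(\Phi, A)$,
\[ [u, v] \;=\; \overline\iota(u)\cdot [1, v] \;=\; \overline\iota(u)\cdot \lambda_h(v)\cdot [1, 1] \;=\; \bigl(\overline\iota(u)\,\lambda_h(v)\bigr)\cdot [1, 1]. \]
The group element $\overline\iota(u)\,\lambda_h(v) \in \St(\Phi, A_h)$ appearing here as a coefficient is precisely the image of $[u, v]$ under the natural map $\pi\colon V \to \St(\Phi, A_h)$.

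From this single identity, both directions of the bijectivity of $\pi$ fall out immediately. For surjectivity, I would invoke equivariance: given any $g \in \St(\Phi, A_h)$, the element $g\cdot [1, 1]$ maps to $g\cdot \pi([1,1]) = g\cdot 1 = g$. For injectivity, if $\pi([u, v]) = \pi([u', v']) = g$, then the displayed identity applied to both pairs forces $[u, v] = g \cdot [1, 1] = [u', v']$ in $V$.

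The principal homogeneous set assertion follows at once: transitivity of the $\St(\Phi, A_h)$-action on $V$ is exactly the statement of the displayed identity, and freeness is transported across the equivariant bijection $\pi$ from the free left regular action of $\St(\Phi, A_h)$ on itself (if $g\cdot [u,v] = [u,v]$ then applying $\pi$ gives $g\cdot \pi([u,v]) = \pi([u,v])$, hence $g=1$). Thus the corollary is a short formal consequence of the preceding proposition; the genuine work — constructing a well-defined $\St(\Phi, A_h)$-action on $V$ from the local data and checking the compatibility formulas~\eqref{additional} — is packaged into Proposition~\ref{action}, which I expect to be the main obstacle and the content of the remaining subsections of~\cref{sec:patching}.
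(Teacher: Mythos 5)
Your argument is correct and is essentially identical to the paper's own proof: both use the equivariance of $V\to\St(\Phi,A_h)$ together with the identities~\eqref{additional} to write $[u,v]=\overline\iota(u)\lambda_h(v)\cdot[1,1]$, deducing surjectivity from $g\cdot[1,1]\mapsto g$ and injectivity from the displayed identity. Nothing further is needed.
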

\begin{proof}
Since $[1,\,1]\in V$ is mapped to $1\in\St(\Phi,\,A_{h})$, we conclude that for any $g\in\St(\Phi,\,A_{h})$ the element $g\cdot[1,\,1]\in V$ is mapped to $g$, i.e. the above map is surjective. On the other hand, if $[u,\,v]$ and $[u',\,v']$ are mapped to the same element of $\St(\Phi,\,A_{h})$, i.e. $\overline\iota(u)\lambda_{h}(v)=\overline\iota(u')\lambda_{h}(v')$, then using~(\ref{additional}) we have
$$
[u,\,v]=\overline\iota(u)\cdot[1,\,v]=\overline\iota(u)\lambda_{h}(v)\cdot[1,\,1]=\overline\iota(u')\lambda_{h}(v')\cdot[1,\,1]=[u',\,v'],
$$
i.e. the above map is injective.
\end{proof}

In the remainder of this subsection we outline the construction of the action from Proposition~\ref{action}. Recall from Section~3 that the definition of the Steinberg group $\St(\Phi,\,R)$ makes sense for a non-unital $R$.

{\bf Step~1.} For $g\in\St(\Phi,\,A_h)$ and sufficiently large $k$ we define a homomorphism $c_{g}\colon\St(\Phi,\,h^kA)\rightarrow\St(\Phi,\,A)$ which models conjugation by $g$ on the level of Steinberg groups (in particular, for all $x\in \St(\Phi,\, h^kA)$ one has the identity $\pi(c_g(x)) = \pi(g) \pi(x) \pi(g^{-1})$ in $\mathrm{G}_{\mathrm{sc}}(\Phi, A_h)$). 
The map $c_g$ is in fact already constructed in~\cite{LSV20} using the technique of pro-groups which is discussed in Subsections~\ref{pro-completion}-\ref{pro-groups}. We also remark that this construction is not canonical.

\begin{rem}
The idea behind the construction of $c_g$ is quite simple. It is enough to define the maps $\St(\Phi,\,h^{k+t}A)\rightarrow\St(\Phi,\,h^tA)$ modeling the conjugation by $g=x_\beta(a/h^s)$ for sufficiently large $k$ (here $k$ depends on $s$ but not on $t$, $a\in A$). We set
$$
c_{x_\beta(a/h^s)}\big(x_\gamma(bh^{k+t})\big)=
\begin{cases}
x_{\gamma}(bh^{k+t})&\text{  for }\beta+\gamma\not\in\Phi\cup0,\\
x_{\beta+\gamma}(N_{\beta\gamma}abh^{k+t-s})x_{\gamma}(bh^{k+t})&\text{ for }\beta+\gamma\in\Phi.
\end{cases}
$$
This agrees with Steinberg relations \eqref{R2} and \eqref{R3}. The difficulty here is to define  $c_{x_\beta(a/h^s)}(x_{-\beta}(b/h^{k+t}))$, but this can be done by decomposing $x_{-\beta}(b/h^{k+t})$ into a commutator using the relation~\eqref{R3}. We refer the reader to the introduction of~\cite{LSV20} for a more detailed explanation of this construction.
\end{rem}

{\bf Step~2.} For $\alpha\in\Phi$, $c/h^s\in A_h$ we construct a map $T_\alpha(c/h^s)\colon V\rightarrow V$ modeling the left translation action of $x_\alpha(c/h^s)$ on the group $\St(\Phi, A_h)$.

In the construction of this map we use the condition $B/h\cong A/h$ of Theorem~\ref{glueing}, which is equivalent to the condition that for every $k\geq0$ one has $A = Ah^k + B$ and $Ah^k\cap B = Bh^k$. Therefore, we can first decompose $c\in A$ as $c=ah^k+b$ for $k$ large enough, $a\in A$, $b\in B$ and then set
\begin{equation}\label{eq:step2}
T_\alpha(ah^{k-s}+b/h^s)\cdot[u,\,v]=[x_\alpha(b/h^s)u,\,c_{u^{-1}}\big(x_\alpha(ah^{k-s})\big)\cdot v].
\end{equation}
Here we invoke the homomorphism $c_{u^{-1}}$ constructed on Step 1. Obviously, we have to check that the the right-hand side of~\eqref{eq:step2} does not depend on the choices of $k$, $c_g$, etc. This is achieved in Lemma~\ref{well-def}.

{\bf Step~3.} Finally, we prove that the maps $T_\alpha(c/h^s)\colon V\rightarrow V$ constructed on Step~2 satisfy Steinberg relations \eqref{R1}--\eqref{R3}.
As a result, we get a well-defined action of $\St(\Phi,\,A_h)$ on $V$. We also check that this action satisfies the additional requirements~(\ref{additional}) formulated in Proposition~\ref{action}. This is accomplished in Section~\ref{sec:proof-glueing}.

\subsection{Overview of pro-completion}
\label{pro-completion}
We refer the reader to \cite[Section~6.1]{SK06} for an introduction to the formalism of ind-objects and pro-objects.

We start by recalling the definition of the {\it pro-completion} $\catname{Pro}(\mathcal{C})$ of a category $\mathcal{C}$ (cf.~\cite[\S~2.1]{LSV20}). By definition, the objects of $\catname{Pro}(\mathcal{C})$ are all functors $X^{(\infty)}\colon\mathcal{I}^{\mathrm{op}} \to \mathcal{C}$, where $\mathcal{I}$ is an arbitrary small nonempty filtered category. For a fixed pro-object $X^{(\infty)}$ the category $\mathcal{I}$ is called the {\it index category of $X^{(\infty)}$}. For $X^{(\infty)} \in
\catname{Pro}(\mathcal{C})$ and $i \in \mathcal{I}$ we denote by $X^{(i)}$ the value of $X^{(\infty)}$ on $i$. 
We call the images of the arrows of $\mathcal{I}$ under $X^{(\infty)}$ the {\it structure morphisms} of $X^{(\infty)}$. Obviously, $X^{(\infty)}$ depends on $\mathcal I$, however for shortness we suppress it from the notation.

Let $X^{(\infty)}\colon\mathcal{I}\to\mathcal{C}$ and $Y^{(\infty)}\colon\mathcal{J}\to\mathcal{C}$ be a pair of pro-objects.
By definition, a {\it pre-morphism} $$\eta\colon X^{(\infty)} \to Y^{(\infty)}$$ is a pair $\left(\eta^*, \{\eta_j\}_{j\in\mathrm{Ob}(\mathcal{J})}\right)$ consisting of a map $\eta^*\colon \mathrm{Ob}(\mathcal{J})\to\mathrm{Ob}(\mathcal{I})$ and a collection of $\mathcal{C}$-morphisms $\eta_j\colon X^{(\eta^*(j))}\to Y^{(j)}$ such that for every arrow $\psi\colon j' \to j$ in $\mathcal J$ there exists $i$ and arrows $\phi'\colon i\rightarrow\eta^*(j')$ and $\phi\colon i\rightarrow\eta^*(j)$ in $\mathcal I$ such that the composite morphisms $$X^{(i)} \xrightarrow{X^{(\infty)}(\phi')} X^{(\eta^*(j'))} \xrightarrow{\eta_{j'}} Y^{(j')} \quad\text{ and }\quad X^{(i)} \xrightarrow{X^{(\infty)}(\phi)} X^{(\eta^*(j))} \xrightarrow{\eta_j} Y^{(j)} \xrightarrow{Y^{(\infty)}(\psi)} Y^{(j')}$$ are equal. We call the individual $\mathcal{C}$-morphism $\eta_j$ {\it the degree $j$ component} of $\eta$.

Pre-morphisms $\eta, \epsilon \colon X^{(\infty)} \to Y^{(\infty)}$ are declared equivalent if for every $j \in \mathrm{Ob}(\mathcal{J})$ there exists $i \in \mathrm{Ob}(\mathcal{I})$ and arrows $i\rightarrow\eta^*(j)$ and $i\rightarrow\epsilon^*(j)$ in $\mathcal I$ such that the composite morphisms $X^{(i)} \to X^{(\eta^*(j))} \xrightarrow{\eta_j} Y^{(j)}$ and $X^{(i)} \to X^{(\epsilon^*(j))} \xrightarrow{\epsilon_j} Y^{(j)}$ are equal. 

By definition, the set of morphisms $\catname{Pro}\left(\mathcal{C})(X^{(\infty)}, Y^{(\infty)}\right)$ consists of equivalence classes $[\eta]$ of pre-morphisms $\eta \colon X^{(\infty)} \to Y^{(\infty)}$ with respect to the equivalence relation defined above. The composition of a pair of morphisms \([\eta] \colon X^{(\infty)} \to Y^{(\infty)}\) and \([\zeta] \colon Y^{(\infty)} \to Z^{(\infty)}\) is defined as the class $[\zeta \circ \eta]$, where the pre-morphism \(\zeta \circ \eta\) is given by \((\zeta \circ \eta)^*(i) = \eta^*(\zeta^*(i))\) and \((\zeta\circ \eta)_{i} = \zeta_{i} \circ \eta_{\zeta^*(i)}\). There is a fully faithful embedding $\mathcal{C}\to\catname{Pro}(\mathcal{C})$ sending $X \in \mathrm{Ob}(\mathcal{C})$ to the obvious functor $X\colon \{ * \} \to \mathcal{C}$. 

\begin{rem}
Suppose that the category $\mathcal{I}$ is a {\it diagram}, i.e. for any two objects $i$, $j\in\mathcal I$ the hom-set $\mathrm{Hom}_{\mathcal I}(i,\,j)$ is either empty or one-element. Let $\eta \colon X^{(\infty)} \to Y^{(\infty)}$ be a pre-morphism and $\xi\colon \mathrm{Ob}(\mathcal{J})\to\mathrm{Ob}(\mathcal{I})$ be an arbitrary function such that $\mathrm{Hom}_\mathcal{I}\big(\xi(j),\,\eta^*(j)\big)=*$ for all $j\in\mathrm{Ob}(\mathcal J)$. The {\it restriction of $\eta$ with respect to $\xi$} is defined as the pre-morphism $\epsilon\colon X^{(\infty)} \to Y^{(\infty)}$ such that $\epsilon^*=\xi$ and the degree $j$ component $\epsilon_j$ is the composite morphism $X^{(\xi(j))}\to X^{(\eta^*(j))}\xrightarrow{\eta_{j}}Y^{(j)}$. The pre-morphism $\epsilon$ is clearly equivalent to $\eta$. This construction will often be used in the sequel without an explicit reference.
  
The index category of every pro-object that we encounter in the sequel is a diagram (in fact, it is the diagram defined in~Example~\ref{main-example} below).
\end{rem}

\begin{example}
\label{main-example}
a) Let $R$ be an integral domain and $S$ a multiplicative subset of $R$ containing $1$. 
We denote by $\lambda_S$ the canonical localization homomorphism $R \to R_S$.

Notice that $S$ can be interpreted as a filtered category with $\mathrm{Ob}(S) = S$ and $S(s, t) = \{ u \in S \mid su = t \}$.
For shortness, we call a commutative ring without unit an {\it rng} and denote the category of rngs by $\catname{Rng}$.
For $s\in S$ the principal ideal $sR$ can be considered as an rng. We define the pro-rng $R^{(\infty)}$ as the functor $S^\mathrm{op} \to \catname{Rng}$ whose value on an arrow $(s \to t)\in S$ is defined as the obvious embedding of rngs $tR \hookrightarrow sR$. 

b) We reuse the notation $\lambda_S$ for the homomorphism $\St(\Phi, R) \to \St(\Phi, R_S)$ of Steinberg groups induced by the canonical localization homomorphism $R \to R_S$. The {\it Steinberg pro-group} $\St^{(\infty)}(\Phi, R)$ is defined as the functor $S^\mathrm{op} \to \catname{Grp}$ obtained from the pro-rng $R^{(\infty)}$ by postcomposition with $\St(\Phi, -)$ (cf.~\cite[\S~2.4]{LSV20}). We denote by $\pi_S$ the pro-group morphism $\St^{(\infty)}(\Phi, R) \to \St(\Phi, R_S)$ defined by the pre-morphism $\pi^* \colon \{* \} \to S$, $\pi^*(*) = 1$, $\pi_{*} = \lambda_S \colon \St(\Phi, R) \to \St(\Phi, R_S)$. 

c) We denote by $\mathbb N$ the set of nonnegative integers. For a non-nilpotent element $h\in R$ denote by $h^{\mathbb{N}}$ the multiplicative subset $\{ h^n \mid n \in \mathbb{N} \} \subseteq R$. The pro-rng $R^{(\infty)}$ associated to $S=h^\mathbb{N}$ can be depicted as the following diagram of rngs:
\[ \ldots \hookrightarrow h^{n+1} R \hookrightarrow h^n R \hookrightarrow \ldots \hookrightarrow hR \hookrightarrow R, \]
in other words, by definition, $R^{(k)}$ is the principal ideal $h^kR$ and all the structure morphisms of $R^{(\infty)}$ are obvious inclusions.
\end{example}

\subsection{Overview of Steinberg pro-groups}
\label{pro-groups}
We refer the reader to~\cite[\S~2]{LSV20} for a more detailed exposition of the material presented in this subsection. We call the pro-completion of the category $\mathcal C=\catname{Grp}$ from the above subsection {\it the category of pro-groups} $\catname{Pro}(\catname{Grp})$.

For $\alpha \in \Phi$ we define the pro-group morphism $x_\alpha \colon R^{(\infty)} \to \St^{(\infty)}(\Phi, R)$ by means of the following data: $(x_\alpha)^* = \mathrm{id}_S$, $(x_\alpha)_s \colon sR \to \St(\Phi, sR)$, $sr \mapsto x_\alpha(sr)$. These morphisms, which we call {\it root subgroup morphisms}, ``generate'' the Steinberg pro-group $\St^{(\infty)}(\Phi, R)$ from the Example~\ref{main-example}\,b) in the following sense.
\begin{proposition}
\label{generators}
A pair of pro-group morphisms $f, g \colon \St^{(\infty)}(\Phi, R) \to G^{(\infty)}$ coincide if and only if $f \circ x_\beta = g \circ x_\beta$ for all $\beta \in \Phi$.
Moreover, if $\alpha \in \Phi$ is any fixed root and the above equalities hold for all $\beta \in \Phi\setminus\{\alpha\}$, then $f=g$.
\end{proposition}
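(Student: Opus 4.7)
My plan is to reduce the statement to the elementary fact that the group $\St(\Phi, h^kR)$ is generated by the root elements $\{x_\beta(\xi) : \beta \in \Phi,\ \xi \in h^kR\}$ for every $k$, combined with the degree-wise description of equivalence of pre-morphisms in $\catname{Pro}(\catname{Grp})$.

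\smallskip

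For the first assertion, the direction $(\Rightarrow)$ is tautological. To prove $(\Leftarrow)$, fix an object $j$ in the index category of $G^{(\infty)}$. For every $\beta \in \Phi$, the assumed equality $f \circ x_\beta = g \circ x_\beta$ of pro-morphisms provides, at the object $j$, an index $k_\beta$ dominating both $f^*(j)$ and $g^*(j)$ such that the two composites
\[
h^{k_\beta}R \xrightarrow{x_\beta} \St(\Phi, h^{k_\beta}R) \longrightarrow \St(\Phi, h^{f^*(j)}R) \xrightarrow{f_j} G^{(j)}
\]
and its $g$-counterpart coincide. Since $\Phi$ is finite, set $k = \max_{\beta \in \Phi} k_\beta$. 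The two group homomorphisms $F, G \colon \St(\Phi, h^kR) \rightrightarrows G^{(j)}$ obtained from $f_j, g_j$ by precomposition with the structure morphisms of $\St^{(\infty)}(\Phi, R)$ then agree on every generator $x_\beta(\xi)$, and therefore coincide. Since $j$ was arbitrary, this yields $f = g$ as pro-morphisms.

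\smallskip

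For the \emph{moreover} part, I reduce to the first assertion by establishing $f \circ x_\alpha = g \circ x_\alpha$. Because $\Phi$ is irreducible of rank $\geq 2$, I pick $\beta, \gamma \in \Phi \setminus \{\alpha\}$ with $\beta + \gamma = \alpha$. By the argument above, applied only to these two roots, for any $j$ I find an index $k$ such that the induced maps $F, G \colon \St(\Phi, h^kR) \rightrightarrows G^{(j)}$ agree on $x_\beta(\xi)$ and $x_\gamma(\xi)$ for all $\xi \in h^kR$. The Chevalley commutator formula~\eqref{R3} then gives, for every $r \in R$, the identity
\[
x_\alpha(h^{2k} r) = \bigl[x_\beta(h^k),\ x_\gamma(N_{\beta\gamma} h^k r)\bigr] \qquad \text{in } \St(\Phi, h^k R),
\]
so $F$ and $G$ agree on $x_\alpha(\xi)$ for every $\xi \in h^{2k} R$. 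This is precisely the condition for equivalence of $f \circ x_\alpha$ and $g \circ x_\alpha$ at the object $j$, after refinement of the index to $2k$. The only delicate point is exactly this refinement from $k$ to $2k$ forced by the quadratic nature of the commutator decomposition; the flexibility of pro-morphism equivalence renders it harmless, and I expect no other obstacle beyond bookkeeping with the structure morphisms.
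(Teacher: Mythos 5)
Your argument is correct. Note that the paper does not actually prove this proposition: it cites \cite[Lemma~2.16]{LSV20} for the first assertion and \cite[Lemma~3.2]{LSV20} for the second, so your write-up supplies a self-contained proof of what the paper outsources, and it follows the expected route --- the first part is exactly the observation that $\St(\Phi, h^kR)$ is generated (by definition, even for a non-unital rng) by the elements $x_\beta(\xi)$, so two homomorphisms out of it agreeing on generators coincide, combined with the finiteness of $\Phi$ to pass to a single index $k$; the second part recovers $x_\alpha$ from the remaining root morphisms via the commutator formula \eqref{R3}, and your sign check $N_{\beta\gamma}^2=1$ and the index refinement from $k$ to $2k$ are both handled correctly. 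Two cosmetic points: the proposition is stated for a general multiplicative set $S$, so ``$k=\max_\beta k_\beta$'' should be read as a common upper bound of the finitely many indices in the filtered category $S$ (e.g.\ their product), which changes nothing; and the existence of a decomposition $\alpha=\beta+\gamma$ with $\beta,\gamma\in\Phi$ (automatically $\beta,\gamma\neq\alpha$, since otherwise the other summand would be $0$ or $2\alpha$) requires $\Phi$ irreducible of rank $\geq 2$, which is guaranteed by the standing hypotheses of the section where the proposition is used.
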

The first assertion of the above proposition is proven in~\cite[Lemma~2.16]{LSV20} while the second assertion follows from~\cite[Lemma~3.2]{LSV20}.

Now for $u=r/s\in R_S$ $r\in R$, $s\in S$, $\beta\in\Phi$ we define the pro-group morphism $x_\beta(u\cdot-)\colon R^{(\infty)}\rightarrow\St^{(\infty)}(\Phi, R)$ as the equivalence class of pre-morphisms $\eta$ with $\eta^*(t)=(st)$ and $\eta_t(a)=x_\alpha(ua)$ (see~\cite[Subsection~4.2]{LSV20} for more details).
\begin{rem}
For a collection of pro-group morphisms $f_\alpha \colon R^{(\infty)} \to G^{(\infty)}$ satisfying the ``pro-analogues'' of Chevalley commutator identities (see~\cite[Remark~2.15]{LSV20}) there exists a unique morphism $f \colon \St^{(\infty)}(\Phi, R) \to G^{(\infty)}$ such that $f_\alpha = f \circ x_\alpha$ for all $\alpha\in\Phi$. This fact is essential for the construction of homomorphism $\mathrm{conj}\colon \St(\Phi, R_S) \to \mathrm{Aut}(\St^{(\infty)}(\Phi, R))$ from Proposition~\ref{prop:conj-action}, which is outlined in~\cite[Lemma~4.2, Proposition~4.3]{LSV20}.
\end{rem}

 The following proposition is a restatement of~\cite[Proposition~4.3]{LSV20}.
\begin{proposition}\label{prop:conj-action}
 For a reduced irreducible simply-laced root system $\Phi$ of rank $\geq 3$ and a multiplicative system $S\subseteq R$ there exists a group homomorphism $\mathrm{conj}\colon \St(\Phi, R_S) \to \mathrm{Aut}(\St^{(\infty)}(\Phi, R))$ compatible with the obvious conjugation action of $\St(\Phi, R_S)$ on itself, i.e. such that for every $g \in \St(\Phi, R_S)$ the following diagram of pro-groups commutes:
 \begin{equation} \label{eq:conj-pis} \begin{tikzcd} \St^{(\infty)}(\Phi, R) \ar{r}{\mathrm{conj}(g)} \ar{d}{\pi_S} & \St^{(\infty)}(\Phi, R) \ar{d}{\pi_S} \\ \St(\Phi, R_S) \ar{r}{{}^g\!(-)} & \St(\Phi, R_S). \end{tikzcd} \end{equation}
 Moreover, for $g=x_\alpha(u)$, $u\in R_S$, $\alpha\in\Phi$ and $\beta\in\Phi$ one has
 \begin{align}
 \label{conj1}
&\mathrm{conj}(x_\alpha(u)) \circ x_\beta =x_\beta,\quad\text{ for }\alpha+\beta\not\in\Phi\cup0,\\
 \label{conj2}
&\mathrm{conj}(x_\alpha(u)) \circ x_\beta =x_{\alpha+\beta}(N_{\alpha\beta}u\cdot-)\cdot x_\beta(-),\quad\text{ for }\alpha+\beta\in\Phi.
 \end{align}
\end{proposition}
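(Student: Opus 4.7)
The plan is to exhibit $\mathrm{conj}(g)$ first when $g = x_\alpha(u)$ is one of the Steinberg generators of $\St(\Phi, R_S)$, and then to verify that the assignment $x_\alpha(u) \mapsto \mathrm{conj}(x_\alpha(u))$ respects the defining relations~\eqref{R1}--\eqref{R3}. By Proposition~\ref{generators}, an endomorphism of $\St^{(\infty)}(\Phi, R)$ is completely determined by its compositions with the root subgroup pro-morphisms $x_\beta$, so at every step the problem reduces to identities between pro-morphisms $R^{(\infty)} \to \St^{(\infty)}(\Phi, R)$ that can be checked componentwise using Chevalley commutator calculations.

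For $g = x_\alpha(u)$ with $u = r/s \in R_S$, the natural candidates for $\mathrm{conj}(x_\alpha(u)) \circ x_\beta$ when $\beta \neq -\alpha$ are given by the formulas~\eqref{conj1}--\eqref{conj2}. With a modest shift of the index category of $R^{(\infty)}$ by powers of $s$, these formulas assemble into honest pro-morphisms. The delicate case is $\beta = -\alpha$, where the target of the conjugation involves a long word in $\St(\Phi, R_S)$ rather than a single root unipotent, so the direct prescription does not apply. Here one uses the rank-$\geq 3$, simply-laced hypotheses to decompose a typical element $x_{-\alpha}(a)$ as a commutator of elements drawn from two other root subgroups $x_\gamma$, $x_\delta$ with $\gamma + \delta = -\alpha$, chosen so that both $\gamma + \alpha \in \Phi$ and $\delta + \alpha \in \Phi$. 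The already-defined action of $\mathrm{conj}(x_\alpha(u))$ on $x_\gamma$ and $x_\delta$ then determines its action on $x_{-\alpha}$ via this commutator presentation, and the main technical point is that the resulting pro-morphism is independent of the decomposition; this is essentially~\cite[Lemma~4.2]{LSV20}. Once this is settled and the pro-analogues of Chevalley commutator identities (see the Remark preceding Proposition~\ref{prop:conj-action}) are verified for the collection $\{\mathrm{conj}(x_\alpha(u))\circ x_\beta\}_{\beta\in\Phi}$, the universal property produces a bona fide pro-group endomorphism of $\St^{(\infty)}(\Phi, R)$.

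Next I would verify the compatibility square~\eqref{eq:conj-pis} on generators: for each $\beta$ the composite $\pi_S \circ \mathrm{conj}(x_\alpha(u)) \circ x_\beta$ must equal conjugation by $x_\alpha(u)$ of $\lambda_S \circ x_\beta$ in $\St(\Phi, R_S)$. For $\beta \neq -\alpha$ this is immediate from~\eqref{conj1}--\eqref{conj2} and the Chevalley commutator formula, and for $\beta = -\alpha$ it follows by applying $\pi_S$ to the commutator decomposition used to define $\mathrm{conj}(x_\alpha(u))\circ x_{-\alpha}$. Finally, the assignment $x_\alpha(u) \mapsto \mathrm{conj}(x_\alpha(u))$ respects the Steinberg relations~\eqref{R1}--\eqref{R3}: each check is an equality of two pro-group endomorphisms, which by Proposition~\ref{generators} reduces to a pro-level Chevalley commutator computation that unwinds to the ordinary relations in $\St(\Phi, R)$. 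The identity $\mathrm{conj}(x_\alpha(u))\,\mathrm{conj}(x_\alpha(-u)) = \mathrm{id}$, a consequence of~\eqref{R1}, then guarantees that the image lies in $\mathrm{Aut}(\St^{(\infty)}(\Phi, R))$. Throughout, the main obstacle is the case $\beta = -\alpha$ and the independence-of-decomposition step underlying it; this is precisely where the rank-$\geq 3$ and simply-laced hypotheses on $\Phi$ are genuinely used.
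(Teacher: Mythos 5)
Your proposal is correct and takes essentially the same route as the paper: the paper simply cites \cite[Lemma~4.2, Proposition~4.3]{LSV20} for the existence of $\mathrm{conj}$ satisfying \eqref{conj1}--\eqref{conj2} --- which is exactly the construction you sketch, including the commutator decomposition of $x_{-\alpha}$ and the independence-of-decomposition point --- and then verifies the square \eqref{eq:conj-pis} on generators via Proposition~\ref{generators}. The only cosmetic difference is that the paper uses the second (``moreover'') assertion of Proposition~\ref{generators} to skip the case $\beta=-\alpha$ entirely in that verification, whereas you check it directly from the commutator decomposition.
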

\begin{proof}
The existence of the homomorphism $\mathrm{conj}$ satisfying~(\ref{conj1})--(\ref{conj2}) has been demonstrated in~\cite[Lemma~4.2, Proposition~4.3]{LSV20}.
Thus, we only need to verify the commutativity of~\eqref{eq:conj-pis}. 
Since $\mathrm{conj}$ is a homomorphism, it is enough to consider the case $g = x_\alpha(u)$, $\alpha \in \Phi$, $u \in R_S$. In view of Proposition~\ref{generators}, to verify the commutativity of~\eqref{eq:conj-pis} it is enough to verify the equalities $\pi_S \circ \mathrm{conj}(g) \circ x_\beta= {}^g\!(-) \circ \pi_S \circ x_\beta$ for $\beta\neq -\alpha$.
 It remains to see that the fulfillment of these equalities is a direct consequence of~(\ref{conj1})--(\ref{conj2}). 
 \end{proof}
\begin{rem} \label{rem:conj-action}
Unwinding the definitions, we see that the commutativity of~\eqref{eq:conj-pis} is equivalent to the following condition: for every $g\in \St(\Phi, R_S)$ there exists an element $s_g\in S$ and a homomorphism $c_g \colon \St(\Phi, s_g R) \to \St(\Phi, R)$ such that $\lambda_S(c_g(x)) = g \cdot \lambda_S(i(x))\cdot g^{-1}$ for all $x\in \St(\Phi, s_g R)$. Here $i$ denotes the homomorphism $\St(\Phi, s_g R) \to \St(\Phi, R)$ induced by the obvious embedding $s_gR \hookrightarrow R$. The element $s_g$ and the homomorphism $c_g$ can be determined via the following procedure: choose a representative $\eta$ for the morphism $\mathrm{conj}(g)$ and set $s_g := \eta^*(1)$ and $c_g := \eta_1$ (the degree $1$ component of $\eta$).
\end{rem}

We also need to study basic functoriality properties of $\mathrm{conj}$.
Let $R, R'$ be a pair of domains, $S \subseteq R$, $S' \subseteq R'$ be a pair of multiplicative subsets and let $f \colon R \to R'$ be a homomorphism such that $f(S)\subseteq S'$.
It is clear that $f$ induces a ring homomorphism $\overline{f}\colon R_S \to R'_{S'}$, we also reuse the notation $\overline{f}$ for the induced homomorphism of Steinberg groups.
Now suppose that there is a map $f^* \colon S' \to S$ such that $t$ divides $f(f^*(t))$ for all $t \in S'$ (without loss of generality, we may assume that $f^*(1)=1$ ). The latter condition means, in particular, that $R'_{f(S)} \cong R'_{S'}$. Then $f^*$ and the homomorphisms $\{f^*(t)R \to tR'\}_{t\in S'}$ obtained from $f$ by restricting its domain and codomain specify a pre-morphism $R^{(\infty)} \to R'^{(\infty)}$. We denote by $f^{(\infty)}$ the corresponding morphism of pro-rngs and also use the same notation for the induced morphism of Steinberg pro-groups.

\begin{proposition} \label{prop:functoriality}
In the notation above, the action $\mathrm{conj}$ constructed in Proposition~\ref{prop:conj-action} has the additional property that the following diagram commutes for every $g \in \St(\Phi, R_S)$:
 \begin{equation} \label{eq:conj-finfty} \begin{tikzcd} \St^{(\infty)}(\Phi, R) \ar{rr}{\mathrm{conj}(g)} \ar{d}{f^{(\infty)}} && \St^{(\infty)}(\Phi, R) \ar{d}{f^{(\infty)}} \\ \St^{(\infty)}(\Phi, R') \ar{rr}{\mathrm{conj}\big(\overline{f}(g)\big)} && \St^{(\infty)}(\Phi, R'). \end{tikzcd} \end{equation}
\end{proposition}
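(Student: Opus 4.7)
The plan is a reduction to root subgroup morphisms followed by direct computation. First I would observe that the set of $g \in \St(\Phi, R_S)$ for which the square~(\ref{eq:conj-finfty}) commutes is closed under multiplication: chaining two commutative squares for $g$ and $g'$, and using that $\mathrm{conj}$ and $\overline{f}$ are group homomorphisms, immediately yields commutativity for $gg'$. Since $\St(\Phi, R_S)$ is generated by the Steinberg symbols $x_\alpha(u)$ and the inverse of such a symbol is again of this form (by~(\ref{R1})), it suffices to verify~(\ref{eq:conj-finfty}) when $g = x_\alpha(u)$ with $\alpha \in \Phi$ and $u \in R_S$.

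For such a $g$, both composites $f^{(\infty)} \circ \mathrm{conj}(g)$ and $\mathrm{conj}(\overline{f}(g)) \circ f^{(\infty)}$ are pro-group morphisms $\St^{(\infty)}(\Phi, R) \to \St^{(\infty)}(\Phi, R')$, so by the second assertion of Proposition~\ref{generators} their equality can be tested after precomposition with the root subgroup morphisms $x_\beta^R$ for $\beta \in \Phi \setminus \{-\alpha\}$. In particular, the ``missing'' root $-\alpha$, which is the only case that would require inverting $g$-conjugation on $x_{-\alpha}$, can be safely excluded.

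The central technical ingredient is then the naturality of root subgroup morphisms with respect to $f$: for every $\beta \in \Phi$ and every $v \in R_S$,
\[
f^{(\infty)} \circ x_\beta^R \;=\; x_\beta^{R'} \circ f^{(\infty)}, \qquad f^{(\infty)} \circ x_\beta(v\cdot -) \;=\; x_\beta\bigl(\overline{f}(v)\cdot -\bigr) \circ f^{(\infty)}.
\]
Unwinding Example~\ref{main-example} and the definition of $x_\beta(v\cdot -)$ preceding Proposition~\ref{prop:conj-action}, both sides of the first identity are represented on index $t \in S'$ by the map $f^*(t)R \to \St(\Phi, tR')$, $r \mapsto x_\beta(f(r))$; the verification of the second identity is analogous, using that $\overline{f}$ agrees with $f$ after the canonical localizations. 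Granted these naturality relations, for $\beta \neq -\alpha$ both composites precomposed with $x_\beta^R$ collapse via~(\ref{conj1})--(\ref{conj2}) either to $x_\beta^{R'} \circ f^{(\infty)}$ (the case $\alpha + \beta \notin \Phi \cup 0$), or to the common product $x_{\alpha+\beta}\bigl(N_{\alpha\beta}\overline{f}(u)\cdot -\bigr)\cdot x_\beta^{R'}(-) \circ f^{(\infty)}$ (the case $\alpha + \beta \in \Phi$); in the latter case the two sides match after applying~(\ref{conj2}) on the appropriate side and transporting $f^{(\infty)}$ past the factor $x_{\alpha+\beta}\bigl(N_{\alpha\beta}u\cdot -\bigr)$ by the second naturality identity applied to the root $\alpha+\beta$.

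The only real obstacle I anticipate is bookkeeping. The two pro-group morphisms above are naturally represented by pre-morphisms with distinct maps on their index categories, and to compare their degree-$t$ components one must first pass to a common refinement of the involved index functions. Since $\catname{Pro}(\catname{Grp})$-morphisms are precisely equivalence classes under exactly such restrictions, as recalled in Subsection~\ref{pro-completion}, this is a purely formal matter of fixing coherent representatives and poses no genuine difficulty.
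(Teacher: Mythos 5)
Your proposal is correct and follows essentially the same route as the paper's proof: reduce to $g = x_\alpha(u)$ using that $\mathrm{conj}$ and $\overline{f}$ are homomorphisms, test equality after precomposition with $x_\beta$ for $\beta \neq -\alpha$ via the second assertion of Proposition~\ref{generators}, and conclude from~(\ref{conj1})--(\ref{conj2}) together with the naturality square $f^{(\infty)} \circ x_\beta = x_\beta \circ f^{(\infty)}$. Your explicit mention of the companion identity $f^{(\infty)} \circ x_\beta(v\cdot -) = x_\beta(\overline{f}(v)\cdot -) \circ f^{(\infty)}$ only spells out a step the paper leaves implicit.
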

\begin{proof}
As before, we may suppose that \(g = x_\alpha(a/s)\) for some \(\alpha \in \Phi\), \(a \in R\), \(s \in S\). By Propostion~\ref{generators}\,b) 
it suffices to check that the long paths in the diagram coincide after precomposition with \(x_\beta \colon R^{(\infty)} \to \St^{(\infty)}(\Phi, R)\) for all \(\beta \neq -\alpha\). This follows from~(\ref{conj1})--(\ref{conj2}) and the commutativity of
\[\begin{tikzcd}
R^{(\infty)} \ar{r}{x_\beta} \ar{d}{f^{(\infty)}} &
\St^{(\infty)}(\Phi, R) \ar{d}{f^{(\infty)}} \\
{R'}^{(\infty)} \ar{r}{x_\beta} & \St^{(\infty)}(\Phi, R'). \end{tikzcd}\]
\end{proof}

\begin{rem} \label{rem:functoriality}
 Unwinding the definitions, we obtain from the commutativity of~\eqref{eq:conj-finfty} that for every $g \in \St(\Phi, R_S)$ there exists $t \in S$ divisible by $s_g$ and $s_{\,\overline f(g)}$ such that the diagram 
 \[ \begin{tikzcd} \St(\Phi, tR) \ar{r} \ar{d} & \St(\Phi, s_g R) \ar{r}{c_{g}} & \St(\Phi, R) \ar{dd}{f} \\
                   \St(\Phi, f^*(s_{\,\overline{f}(g)})R) \ar{d}{f}   & & \\
                   \St(\Phi, s_{\,\overline{f}(g)} R')   \ar{rr}{c_{\,\overline{f}(g)}} & & \St(\Phi, R'). \end{tikzcd} \]
commutes (here $s_g$, $s_{\,\overline{f}(g)}$, $c_g$, $c_{\,\overline{f}(g)}$ have the same meaning as in Remark~\ref{rem:conj-action}).
\end{rem}

Now let us specialize the above results to a particular situation that we will encounter in~\cref{sec:proof-glueing}.

Let $\iota \colon B \to A$ be an injective homomorphism of integral domains and $h \neq 0$ be an element of $B$. 
Denote by $h^{\mathbb{N}}$ the multiplicative subset $\{ h^n \mid n \in \mathbb{N} \} \subseteq B$ as in Example~\ref{main-example}\,c). 
Consider the multiplicative subset $h^\mathbb{N} \subseteq A$ and the corresponding homomorphism of localizations $\overline{\iota} \colon B_h \to A_{h}$. It is clear that if we put $\iota^*(h^n) = h^n$, the subsets $h^\mathbb{N} \subseteq B$, $h^\mathbb{N} \subseteq A$, and the homomorphism $\iota$ satisfy the requirements mentioned before Proposition~\ref{prop:functoriality}. Thus, there exists a morphism $\iota^{(\infty)}$ of pro-rngs $B^{(\infty)} \to A^{(\infty)}$ and the corresponding morphism of Steinberg pro-groups $\iota^{(\infty)}\colon\St^{(\infty)}(\Phi, B) \to \St^{(\infty)}(\Phi, A)$ satisfying the conclusion of Proposition~\ref{prop:functoriality}.

\begin{corollary}
\label{vorcor}
For $\iota \colon B \to A$ and $h \in B$ as above and every $g \in \St(\Phi, B_h)$ there exists $n = n(g) \in \mathbb{N}$ 
and group homomorphisms $c_g \colon \St(\Phi, h^nB) \to \St(\Phi, B)$, $c_{\overline{\iota}(g)} \colon \St(\Phi, h^nA) \to \St(\Phi, A)$
such that the following equalities are fulfilled:
\begin{align}
 \label{eq:coherence} \iota \circ c_g &= c_{\overline{\iota}(g)} \circ \iota, &\\
 \label{eq:strictB} \lambda_h^0 (c_g (x)) &= g \cdot \lambda_h^n(x) \cdot g^{-1}&\text{ for }x \in \St(\Phi, h^nB),\\
 \label{eq:strictA} \lambda_h^0 (c_{\overline{\iota}(g)}(y)) &= \overline{\iota}(g) \cdot \lambda_h^n(y) \cdot \overline{\iota}(g)^{-1}&\text{ for }y \in \St(\Phi, h^nA).
\end{align}
Here $\lambda_h^k$ denotes the homomorphism of Steinberg grops induced by the following composite rng homomorphism $h^kB \hookrightarrow B \to B_h$ (or $h^kA \hookrightarrow A \to A_{h}$), where $k \in \mathbb{N}$.
\end{corollary}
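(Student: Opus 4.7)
The plan is to deduce the corollary directly from Propositions~\ref{prop:conj-action} and~\ref{prop:functoriality} by unwinding the pro-group data along the lines explained in Remarks~\ref{rem:conj-action} and~\ref{rem:functoriality}. No new commutator calculations should be necessary: the entire content is bookkeeping of the indices in $h^{\mathbb{N}}$.

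First I would apply Proposition~\ref{prop:conj-action} to the two data $(R,S)=(B,h^{\mathbb{N}})$ and $(R,S)=(A,h^{\mathbb{N}})$, obtaining pro-group automorphisms $\mathrm{conj}(g)$ of $\St^{(\infty)}(\Phi,B)$ and $\mathrm{conj}(\overline{\iota}(g))$ of $\St^{(\infty)}(\Phi,A)$. Choose representing pre-morphisms; Remark~\ref{rem:conj-action} then provides integers $m_B$, $m_A$ and group homomorphisms
\[
c_g\colon \St(\Phi,h^{m_B}B)\to \St(\Phi,B),\qquad c_{\overline{\iota}(g)}\colon \St(\Phi,h^{m_A}A)\to \St(\Phi,A),
\]
that strictly satisfy the conjugation identities (\ref{eq:strictB}) and (\ref{eq:strictA}) after localization at $h$.

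Next I would invoke Proposition~\ref{prop:functoriality} with $f=\iota$, $S=S'=h^{\mathbb{N}}$, and the cofinal re-indexing $f^{*}(h^{k})=h^{k}$; this is legitimate because $\iota(h)=h$ divides itself in $A$. The resulting commutative square (\ref{eq:conj-finfty}) of pro-groups, translated through Remark~\ref{rem:functoriality}, yields an integer $m_\iota$ (divisible by both $m_B$ and $m_A$) such that
\[
\iota\circ c_g = c_{\overline{\iota}(g)}\circ\iota \quad\text{on}\quad \St(\Phi,h^{m_\iota}B).
\]

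Finally I would take $n:=m_\iota$ and restrict $c_g$ and $c_{\overline{\iota}(g)}$ along the inclusions $h^{n}B\hookrightarrow h^{m_B}B$ and $h^{n}A\hookrightarrow h^{m_A}A$. The restricted maps inherit properties (\ref{eq:strictB}) and (\ref{eq:strictA}) from Proposition~\ref{prop:conj-action} (because the diagram (\ref{eq:conj-pis}) commutes by construction), and they satisfy (\ref{eq:coherence}) by the previous step. The only potential obstacle is keeping track of three different cofinality indices simultaneously, but since $h^{\mathbb{N}}$ is a totally ordered diagram any finite set of elements admits a common multiple, so a single $n$ works. Thus the proof amounts to choosing $n$ large enough and reading off the statement from the already established pro-group formalism.
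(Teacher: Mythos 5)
Your proposal is correct and is essentially the paper's own argument: the paper likewise derives \eqref{eq:strictB}--\eqref{eq:strictA} from the identity in Remark~\ref{rem:conj-action} and \eqref{eq:coherence} from the commutative diagram in Remark~\ref{rem:functoriality}, with the common index $n$ obtained exactly as you describe by passing to a sufficiently divisible element of $h^{\mathbb N}$.
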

\begin{proof}
 The first equality is a special case of the commutativity of the diagram from Remark~\ref{rem:functoriality}.
 The second and the third equalities follow from the equality discussed in Remark~\ref{rem:conj-action}.
\end{proof}
\begin{rem}\label{rem:indendepence}
 Notice that a priori the homomorphism $c_g$ depends on the choice of a representative for $\mathrm{conj}(g)$. On the other hand, from the definition of equivalence of pre-morphisms it follows that if $c'_g$ is the degree $1$ component for a different representative of $\mathrm{conj}(g)$ (see Remark~\ref{rem:conj-action}) then for sufficiently large $m$ one has $c'_{\overline{\iota}(g)}(x_{\alpha}(ah^m)) = c_{\overline{\iota}(g)}(x_{\alpha}(ah^m))$.
\end{rem}

\subsection{Proof of Theorem~\ref{glueing}} \label{sec:proof-glueing}

Let us briefly recall the definition of the set $V$. Recall that we let $\St(\Phi, B)$ act on $\St(\Phi, B_h) \times \St(\Phi, A)$ on the left via the formula 
\[w \star (u, v) = (u \lambda_h(w)^{-1}, \iota(w) v),\text{ where }w \in \St(\Phi, B),\ u \in \St(\Phi, B_h),\ v \in \St(\Phi, A).\]
We denote by $V$ the orbit set for this action and use the notation $[u, v]$ for the orbit corresponding to a pair $(u, v) \in \St(\Phi, B_h) \times \St(\Phi, A)$.

Our goal is to show that the formula $[u, v] \mapsto \overline{\iota}(u) \cdot \lambda_{h}(v)$ defines a bijection between $V$ and $\St(\Phi, A_{h})$. This clearly implies the exactness of~(\ref{stavrova-sequence}) which is equivalent to the claim of Theorem~\ref{glueing}. 

So far we have realized Step~1 of the plan from Subsection~\ref{plan}, i.e. we have constructed the homomorphisms $c_g$. Now fix some $u \in \St(\Phi, B_h)$, $v \in \St(\Phi, A)$, $\alpha \in \Phi$, and $c/{h^s} \in A_{h}$. 
Applying \cref{vorcor} to the element $u$ we obtain a natural number $n=n(u)$ and homomorphisms \begin{equation} \label{eq:c-homs} c_{u^{-1}}\colon \St(\Phi, h^nB) \to \St(\Phi, B),\ c_{\overline{\iota}(u^{-1})}\colon \St(\Phi, h^nA) \to \St(\Phi, A)\end{equation}
satisfying the identities listed in~\cref{vorcor}.

Now we are ready to move on to Step~2 of the plan, namely, to construct operators $T_\alpha(c/h^s)\colon V\rightarrow V$. Observe that the assumption $A/hA = B/hB$ is equivalent to the condition that for every $k \geq 0$ one has $A = Ah^k + B$ and $Ah^k \cap B = Bh^k$. Now, in the notation above, choose any $k \geq n + s$ and decompose $c \in A$ as
 \begin{equation} \label{eq:decomp} c = ah^k + b\text{ for some }a \in A,\ b \in B. \end{equation}
We define the operator $T_\alpha(c/h^s) \colon V \to V$ via the formula:
\begin{equation}\label{eq:generator-action}
\textstyle
T_\alpha(c/{h^s}) \cdot [u, v] = \bigl[x_\alpha(b/{h^s})\cdot u,\ c_{\overline{\iota}(u^{-1})}(x_\alpha(ah^{k - s})) \cdot v\bigr].
\end{equation}
\begin{lemma}\label{well-def}
The right hand side of~\eqref{eq:generator-action} is independent of either the choice of the decomposition~\eqref{eq:decomp}, the choice of representatives for the morphisms $\mathrm{conj}(u)$, $\mathrm{conj}(\overline{\iota}(u))$ or the choice of a representative for the orbit $[u, v]$.
\end{lemma}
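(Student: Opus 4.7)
The plan is to verify independence sequentially for the three choices: the decomposition $c = ah^k + b$ (including the integer $k$), the representatives for $\mathrm{conj}(u^{-1})$ and $\mathrm{conj}(\overline{\iota}(u^{-1}))$, and the orbit representative $(u, v)$. Throughout I would work with a fixed $\alpha \in \Phi$ and $c/h^s \in A_h$, and use the integer $n = n(u)$ supplied by Corollary~\ref{vorcor}.

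For the decomposition at a fixed $k \geq n + s$, two decompositions $c = ah^k + b = a'h^k + b'$ differ by an element $w \in B$ with $b = b' - h^k w$ and $a = a' + w$; such $w$ exists because the hypothesis $B/hB = A/hA$ is equivalent to $B \cap h^k A = h^k B$ for all $k$. I would take the connecting element $g := c_{u^{-1}}\bigl(x_\alpha(-h^{k-s}w)\bigr) \in \St(\Phi, B)$, which is well-defined since $h^{k-s}w \in h^{n}B$. The identity~\eqref{eq:strictB} yields $\lambda_h(g)^{-1} = u^{-1} x_\alpha(h^{k-s}w) u$, which produces the required equality of first components under the action of $g$. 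For the second components, \eqref{eq:coherence} gives $\iota(g) = c_{\overline{\iota}(u^{-1})}\bigl(x_\alpha(-h^{k-s}w)\bigr)$, and combining this with the homomorphism property of $c_{\overline{\iota}(u^{-1})}$ together with the commutativity of $x_\alpha(ah^{k-s})$ and $x_\alpha(-wh^{k-s})$ (which commute as elements of the root subgroup $x_\alpha$) finishes the check. Different values of $k$ reduce to the fixed-$k$ case: a level-$(k+1)$ decomposition $c = a'' h^{k+1} + b''$ may be read as a level-$k$ decomposition $c = (a''h) h^k + b''$, and the two formulas for $T_\alpha(c/h^s)$ agree because $(a'' h)\cdot h^{k-s} = a'' h^{(k+1)-s}$.

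Independence from the choice of representatives for $\mathrm{conj}(u^{-1})$ and $\mathrm{conj}(\overline{\iota}(u^{-1}))$ follows from Remark~\ref{rem:indendepence}: two representatives agree on $x_\alpha(ah^m)$ for all sufficiently large $m$, so enlarging $k$ (permissible by the previous paragraph) lands in the stable range and the right-hand side of~\eqref{eq:generator-action} is unchanged. The compatibility~\eqref{eq:coherence} between the two representatives survives the enlargement because it originates from the morphism $\iota^{(\infty)}$ of pro-groups via Proposition~\ref{prop:functoriality}.

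Finally, suppose $(u', v') = w \star (u, v) = \bigl(u\lambda_h(w)^{-1},\, \iota(w) v\bigr)$ for some $w \in \St(\Phi, B)$. Then $\overline{\iota}((u')^{-1}) = \lambda_h(\iota(w)) \cdot \overline{\iota}(u^{-1})$, and since $\mathrm{conj}$ is a group homomorphism, I may choose the representative $c_{\overline{\iota}((u')^{-1})}(y) := \iota(w) \cdot c_{\overline{\iota}(u^{-1})}(y) \cdot \iota(w)^{-1}$; this is a homomorphism and satisfies~\eqref{eq:strictA}, as a direct computation using $\lambda_h \circ \iota = \overline{\iota} \circ \lambda_h$ shows. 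With this particular choice (which is legitimate by the previous paragraph), taking $\tilde w := w^{-1}$ as the connecting element of $\St(\Phi, B)$ exhibits the required orbit equivalence $T_\alpha(c/h^s) \cdot [u', v'] = T_\alpha(c/h^s) \cdot [u, v]$: the first-component identity $u = u\lambda_h(w)^{-1}\lambda_h(\tilde w)^{-1}$ is immediate, and the second-component identity reduces to $\iota(w) c_{\overline{\iota}(u^{-1})}\bigl(x_\alpha(ah^{k-s})\bigr) = c_{\overline{\iota}((u')^{-1})}\bigl(x_\alpha(ah^{k-s})\bigr)\iota(w)$, which is built into the choice of representative. The main obstacle throughout is bookkeeping of the integers $n(g)$ appearing in Corollary~\ref{vorcor}; the strategy is to secure independence of $k$ first and then invoke the freedom to enlarge $k$ whenever a subsequent identity forces us into a stable range, which is precisely what the pro-group formalism is designed to support.
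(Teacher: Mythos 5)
Your proposal is correct and follows essentially the same route as the paper's own proof: the same three-step verification using \eqref{eq:strictB} and \eqref{eq:coherence} with a connecting element of the form $c_{u^{-1}}(x_\alpha(\pm wh^{k-s}))$ for the decomposition step, Remark~\ref{rem:indendepence} plus enlargement of $k$ for the choice of representatives, and the conjugation-by-$\iota(w)$ representative of $\mathrm{conj}(\overline{\iota}({u'}^{-1}))$ for the orbit step. The only (harmless) difference is organizational: you first compare two decompositions at the same level $k$ and then reduce distinct levels to a common one, whereas the paper compares a level-$k$ and a level-$l$ decomposition directly.
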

\begin{proof}
First, let us show the independence of the choice of a decomposition for $c$. 
Choose some $l \geq n + s$ and $c = a' h^{l} + b'$. Without loss of generality, we may assume $l \leq k$ so that $a' h^{l} - a h^k = b - b' \in Ah^{l} \cap B = Bh^{l}$ and there exists $d \in B$ such that $b = b' + dh^{l}$. Thus, from~\eqref{eq:strictB} we obtain that
\begin{equation} \label{eq:wd1} \textstyle x_\alpha(b/{h^s})\, u = x_\alpha({b'}/{h^s})\, u \cdot u^{-1}\, x_\alpha(dh^{l-s})\, u =  x_\alpha({b'}/{h^s})\, u \cdot \lambda_h \bigl(c_{u^{-1}}(x_\alpha(dh^{l-s}))\bigr). \end{equation}
Since $A$ is a domain, $h$ is not a zero divisor, consequently from $a'h^l = ah^k + dh^l$ we obtain that $a'h^{l-s} = dh^{l-s} + ah^{k-s}$, thus
\begin{align*}
[x_\alpha(b'/{h^s})\, u,\ c_{\overline{\iota}(u^{-1})}(x_\alpha(a'h^{l - s}))\, v\bigr] = \bigl[x_\alpha(b'/{h^s})\, u,\ \iota\bigl(c_{u^{-1}}(x_\alpha(dh^{l-s}))\bigr)\, c_{\overline{\iota}(u^{-1})}(x_\alpha(ah^{k - s}))\, v\bigr] & \text{ by~\eqref{eq:coherence}} \\ = \bigl[x_\alpha(b/{h^s})\, u,\ c_{\overline{\iota}(u^{-1})}(x_\alpha(ah^{k - s}))\, v\bigr] &\text{ by~\eqref{eq:wd1},}
\end{align*}
 which coincides with the right-hand side of~\eqref{eq:generator-action}.

The independence of the right-hand side of~\eqref{eq:generator-action} of the choice of representatives for $\mathrm{conj}(g)$ and $\mathrm{conj}(\overline{\iota}(g))$ follows from Remark~\ref{rem:indendepence} and the fact that the number $k$ in the decomposition for $c$ could have been chosen arbitrarily large.

Now suppose that $(u', v')$ is another representative for the orbit $[u, v]$, i.e. $u' = u \cdot \lambda_h(w)$, $v' = \iota(w^{-1})\cdot v$ for some $w \in \St(\Phi,\,B)$. It is clear that we can choose a representative $\eta$ for $\mathrm{conj}(\overline{\iota}(\lambda_h(w^{-1})))$ in such a way that $\eta^*(1)=1$ and its first-degree component $\eta_1 \colon \St(\Phi, A) \to \St(\Phi, A)$ coincides with $h \mapsto \iota(w^{-1}) \cdot h \cdot \iota(w)$, moreover, from the definition of the composition of pre-morphisms we conclude that $c_{\overline{\iota}({u'}^{-1})} =  \eta_1 \circ c_{\overline{\iota}(u^{-1})} $. The independence of~\eqref{eq:generator-action} of the choice of $(u, v)$ now follows from the following direct calculation:
\begin{multline*}
 [x_\alpha(b/h^s) \cdot u',\ c_{\overline{\iota}({u'}^{-1})}(x_\alpha(ah^{k-s}))\cdot v'] = [x_\alpha(b/h^s) \cdot u \cdot \lambda_h(w),\ c_{\overline{\iota}({u'}^{-1})}(x_\alpha(ah^{k-s}))\cdot \iota(w^{-1}) v] = \\
 = [x_\alpha(b/h^s) \cdot u \cdot \lambda_h(w),\ \iota(w) \cdot c_{\overline{\iota}(u^{-1})}(x_\alpha(ah^{k-s}))\cdot v] 
 = [x_\alpha(b/h^s) \cdot u,\ c_{\overline{\iota}(u^{-1})}(x_\alpha(ah^{k-s}))\cdot v]. \qedhere
\end{multline*}
\end{proof}

Finally, we move on to Step~3 of the plan, namely we verify that the above operators $T_\alpha(c/h^s)$ specify an action of $\St(\Phi, A_{h})$ on $V$ which satisfies~(\ref{additional}).
This is accomplished in the series of lemmas below:

\begin{lemma}\label{lem:R1} The operators $T_\alpha$ satisfy relations~\eqref{R1}. \end{lemma}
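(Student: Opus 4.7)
The plan is to show that for any $c_1, c_2 \in A$ and common denominator $h^s$ the two operators $T_\alpha(c_1/h^s)\circ T_\alpha(c_2/h^s)$ and $T_\alpha((c_1+c_2)/h^s)$ agree on an arbitrary orbit $[u,v]\in V$. Fix $[u,v]$ and choose $k$ large enough so that three conditions are met: $k\geq n(u)+s$; $k\geq n(u')+s$ for the element $u'\in \St(\Phi,B_h)$ defined below; and $k-s$ is past the index at which a certain equivalence of pre-morphisms (discussed below) yields equality of degree components. Pick decompositions $c_i=a_ih^k+b_i$ with $a_i\in A$, $b_i\in B$; then $c_1+c_2=(a_1+a_2)h^k+(b_1+b_2)$ is a legal decomposition for the sum, so by~\cref{well-def} it is enough to establish equality when all three operator applications use these particular decompositions.

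Expanding the left-hand side, $T_\alpha(c_1/h^s)\cdot[u,v]=[u',v']$ with $u'=x_\alpha(b_1/h^s)u$ and $v'=c_{\overline{\iota}(u^{-1})}(x_\alpha(a_1h^{k-s}))v$; then
\[T_\alpha(c_2/h^s)\cdot[u',v']=\bigl[x_\alpha(b_2/h^s)u',\ c_{\overline{\iota}({u'}^{-1})}(x_\alpha(a_2h^{k-s}))\cdot v'\bigr].\]
By relation~\eqref{R1} in $\St(\Phi,B_h)$ the first coordinate collapses to $x_\alpha((b_1+b_2)/h^s)u$, which is exactly the first coordinate of $T_\alpha((c_1+c_2)/h^s)\cdot[u,v]$. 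So everything comes down to identifying the second coordinate; this is where I expect the main obstacle.

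The key identity to establish is
\[c_{\overline{\iota}({u'}^{-1})}(x_\alpha(a_2h^{k-s}))=c_{\overline{\iota}(u^{-1})}(x_\alpha(a_2h^{k-s})).\]
Since ${u'}^{-1}=u^{-1}\cdot x_\alpha(-b_1/h^s)$ and $\mathrm{conj}$ is a homomorphism (\cref{prop:conj-action}), one has
\[\mathrm{conj}(\overline{\iota}({u'}^{-1}))=\mathrm{conj}(\overline{\iota}(u^{-1}))\circ\mathrm{conj}\bigl(x_\alpha(-\overline{\iota}(b_1)/h^s)\bigr).\]
The simply-laced hypothesis gives $2\alpha\notin\Phi\cup 0$, so~\eqref{conj1} yields $\mathrm{conj}(x_\alpha(-\overline{\iota}(b_1)/h^s))\circ x_\alpha=x_\alpha$ as pro-group morphisms, and postcomposing with $\mathrm{conj}(\overline{\iota}(u^{-1}))$ gives $\mathrm{conj}(\overline{\iota}({u'}^{-1}))\circ x_\alpha=\mathrm{conj}(\overline{\iota}(u^{-1}))\circ x_\alpha$. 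Unwinding the definition of equivalence of pre-morphisms (as in~\cref{rem:indendepence}), this equality of pro-morphisms implies that after shifting the domain index sufficiently far—which is why $k$ was taken large—the degree components of the two composites agree on $x_\alpha(ah^{k-s})$ for every $a\in A$. This gives the desired identification.

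With this in place the second coordinate of $T_\alpha(c_2/h^s)T_\alpha(c_1/h^s)\cdot[u,v]$ becomes
\[c_{\overline{\iota}(u^{-1})}(x_\alpha(a_2h^{k-s}))\cdot c_{\overline{\iota}(u^{-1})}(x_\alpha(a_1h^{k-s}))\cdot v=c_{\overline{\iota}(u^{-1})}(x_\alpha((a_1+a_2)h^{k-s}))\cdot v\]
by~\eqref{R1} in $\St(\Phi,h^{k-s}A)$ together with the fact that $c_{\overline{\iota}(u^{-1})}$ is a group homomorphism. This matches the second coordinate of $T_\alpha((c_1+c_2)/h^s)\cdot[u,v]$ obtained from the decomposition $c_1+c_2=(a_1+a_2)h^k+(b_1+b_2)$, which completes the verification.
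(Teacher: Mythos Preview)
Your argument is the paper's: the key step---deriving $\mathrm{conj}(\overline{\iota}({u'}^{-1}))\circ x_\alpha=\mathrm{conj}(\overline{\iota}(u^{-1}))\circ x_\alpha$ from~\eqref{conj1} and the fact that $\mathrm{conj}$ is a homomorphism---is identical, and the remaining bookkeeping matches. The only wrinkle is your order of choices: you ask for a single $k$ that simultaneously satisfies $k\geq n(u')+s$ and dominates the shift at which the pre-morphism equivalence takes effect, but $u'=x_\alpha(b_1/h^s)u$ depends on $b_1$, which depends on the decomposition $c_1=a_1h^k+b_1$, hence on $k$ itself---a circular constraint whose solvability you have not argued. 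The paper sidesteps this by decoupling the two exponents: first decompose $c=ah^k+b$ with any $k\geq n(u)+s$, thereby fixing $u'$ once and for all; then choose $m$ large enough (now depending only on the already-fixed $u'$) for the pre-morphism equality to hold at index $m-s$, and decompose $c'=a'h^m+b'$. With this adjustment your computation goes through verbatim.
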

\begin{proof}
We need to show that for $c, c' \in A$ one has $\textstyle
T_\alpha(c'/{h^s}) \cdot T_\alpha(c/h^s) \cdot [u, v] = T_\alpha\bigl((c+c')/h^s\bigr) \cdot [u, v].$
Choose a decomposition $c = ah^k + b$ as in~\eqref{eq:decomp} for some $k \geq n(u) + s$.
We claim that there exists a sufficiently large number $m$ such that $c_{\overline{\iota}(u^{-1}) \cdot x_\alpha(-b/h^s)}(x_\alpha(a'h^{m-s})) = c_{\overline{\iota}(u^{-1})}(x_\alpha(a'h^{m-s}))$ for all $a'\in A$. Indeed, since $\mathrm{conj}$ is a homomorphism, we conclude that
$$
\mathrm{conj}\big(\overline{\iota}(u^{-1}) \cdot x_\alpha(-b/h^s)\big)\circ x_\alpha=\mathrm{conj}(\overline{\iota}(u^{-1}))\circ\mathrm{conj}(x_\alpha(-b/h^s))\circ x_\alpha=\mathrm{conj}(\overline{\iota}(u^{-1}))\circ x_\alpha
$$
by~(\ref{conj1}), in particular, the degree $1$ components of these pro-group maps become equal after the precomposition with a structure morphism $h^{m-s}R\hookrightarrow R$ for $m$ large enough. 
Now choose a decomposition $c' = a' h^m + b'$.
The assertion now follows from the following calculation:
\begin{multline*}
 T_\alpha(c'/{h^s}) \cdot T_\alpha(c/h^s) \cdot [u,\ v] = T_\alpha(c'/h^s) \cdot [x_\alpha(b/h^s)\cdot u,\ c_{\overline{\iota}(u^{-1})}(x_\alpha(ah^{k-s})) \cdot v] = \\
 = [x_\alpha((b + b')/h^s)\cdot u,\ c_{\overline{\iota}(u^{-1}) \cdot x_\alpha(-b/h^s)}(x_\alpha(a'h^{m-s})) \cdot c_{\overline{\iota}(u^{-1})}(x_\alpha(ah^{k-s})) \cdot v] = \\ = [x_\alpha((b + b')/h^s)\cdot u,\  c_{\overline{\iota}(u^{-1})}(x_\alpha(a'h^{m-s} + ah^{k-s})) \cdot v] = T_\alpha\bigl((c+c')/h^s\bigr) \cdot [u, v]. \qedhere
\end{multline*}
\end{proof}

\begin{lemma}\label{lem:R2} The operators $T_\alpha$ satisfy relations~\eqref{R2}. \end{lemma}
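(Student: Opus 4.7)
The goal is to verify, for $\alpha + \beta \notin \Phi \cup \{0\}$ and arbitrary $c/h^s, c'/h^{s'} \in A_h$, the commutation $T_\alpha(c/h^s) \cdot T_\beta(c'/h^{s'}) \cdot [u, v] = T_\beta(c'/h^{s'}) \cdot T_\alpha(c/h^s) \cdot [u, v]$. The plan closely follows the pattern of Lemma~\ref{lem:R1}: expand both sides via the defining formula~\eqref{eq:generator-action}, simplify the resulting ``conjugation'' homomorphisms $c_{(-)}$ using that $\mathrm{conj}$ is a group homomorphism combined with identity~\eqref{conj1}, and conclude by two applications of relation~\eqref{R2}, one in $\St(\Phi, B_h)$ and one in $\St(\Phi, h^n A)$.

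Concretely, choose decompositions $c = a h^k + b$ and $c' = a' h^{k'} + b'$ with $k, k'$ sufficiently large. A double application of~\eqref{eq:generator-action} yields
$$ T_\alpha(c/h^s) \cdot T_\beta(c'/h^{s'}) \cdot [u, v] = \bigl[x_\alpha(b/h^s)\, x_\beta(b'/h^{s'})\, u,\ c_{\overline{\iota}(u^{-1} x_\beta(-b'/h^{s'}))}\bigl(x_\alpha(a h^{k-s})\bigr) \cdot c_{\overline{\iota}(u^{-1})}\bigl(x_\beta(a' h^{k'-s'})\bigr)\, v\bigr], $$
and a symmetric expression with the roles of $\alpha$ and $\beta$ swapped for the opposite order. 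Since $\mathrm{conj}$ is a homomorphism and $\alpha + \beta \notin \Phi \cup \{0\}$, identity~\eqref{conj1} gives the equality of pro-group morphisms
$$ \mathrm{conj}\bigl(\overline{\iota}(u^{-1} x_\beta(-b'/h^{s'}))\bigr) \circ x_\alpha = \mathrm{conj}\bigl(\overline{\iota}(u^{-1})\bigr) \circ \mathrm{conj}\bigl(\overline{\iota}(x_\beta(-b'/h^{s'}))\bigr) \circ x_\alpha = \mathrm{conj}\bigl(\overline{\iota}(u^{-1})\bigr) \circ x_\alpha. $$
Taking degree-$1$ components, which by definition of equivalence of pre-morphisms agree after precomposition with a structure morphism $h^m R \hookrightarrow R$ for some $m$ that we absorb into $k$, we obtain the simplification $c_{\overline{\iota}(u^{-1} x_\beta(-b'/h^{s'}))}(x_\alpha(a h^{k-s})) = c_{\overline{\iota}(u^{-1})}(x_\alpha(a h^{k-s}))$; the analogous identity holds for the other order.

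After this reduction both sides take the form $[u'', v'']$, where $u''$ is a product of $x_\alpha(b/h^s)$, $x_\beta(b'/h^{s'})$ and $u$ taken in the two possible orders, and $v''$ is obtained by applying the group homomorphism $c_{\overline{\iota}(u^{-1})}$ to a product of $x_\alpha(a h^{k-s})$ and $x_\beta(a'h^{k'-s'})$ in the corresponding order and multiplying the result by $v$. The required equality then follows from two applications of~\eqref{R2}: one in $\St(\Phi, B_h)$ to interchange the first coordinate, and one in $\St(\Phi, h^n A)$ to interchange the root unipotents inside the argument of $c_{\overline{\iota}(u^{-1})}$ before invoking that it is a group homomorphism. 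I do not anticipate a serious obstacle; the only technical point is the simultaneous coordination of the chosen representatives of $\mathrm{conj}(\overline{\iota}(u^{-1}))$ and $\mathrm{conj}(\overline{\iota}(u^{-1} x_\beta(-b'/h^{s'})))$ with the exponents $k, k'$, which is handled exactly as in the proofs of Lemmas~\ref{well-def} and~\ref{lem:R1}.
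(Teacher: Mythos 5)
Your proposal is correct and follows essentially the same route as the paper's proof: expand both sides via the defining formula, use that $\mathrm{conj}$ is a homomorphism together with~\eqref{conj1} to replace $c_{\overline{\iota}(u^{-1}x_\beta(\cdot))}$ by $c_{\overline{\iota}(u^{-1})}$ on the relevant root unipotents for a sufficiently large exponent, and conclude with~\eqref{R2} applied in each component. The only cosmetic differences (allowing distinct denominators $s, s'$, and commuting inside $\St(\Phi, h^nA)$ rather than in $\St(\Phi, A)$ after applying the homomorphism) are immaterial.
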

\begin{proof}
 Fix a natural number $s$ and a pair of roots $\alpha, \beta \in \Phi$ such that $\alpha+\beta\not\in\Phi\cup0$. Using~(\ref{conj1}) we have
 \begin{align*}
&\mathrm{conj}\big(\,\overline\iota(u^{-1})x_\beta(-b/h^s)\big)\circ x_\alpha =  \mathrm{conj}\big(\,\overline\iota(u^{-1})\big)\circ x_\alpha,
&\mathrm{conj}\big(\,\overline\iota(u^{-1})x_\alpha(-a/h^s)\big)\circ x_\beta =  \mathrm{conj}\big(\,\overline\iota(u^{-1})\big)\circ x_\beta,
 \end{align*}
 and therefore from the definition of composition of pro-group morphisms and the definition of $\mathrm{conj}$ it follows that for any $u \in \St(\Phi, B_h)$ there exists sufficiently large natural number $n$ such that for all $a \in A$, $b\in B$ the equalities
 \begin{align*}
&c_{\,\overline\iota(u^{-1}x_\beta(-b/h^s))}(x_\alpha(ah^{n-s}))= c_{\,\overline\iota(u^{-1})}(x_\alpha(ah^{n-s})),
&c_{\,\overline\iota(u^{-1}x_\alpha(-a/h^s))}(x_\beta(bh^{n-s}))= c_{\,\overline\iota(u^{-1})}(x_\beta(bh^{n-s}))
\end{align*}
 hold. Now decompose $c=ah^n+b$, $c'=a'h^n+b'$, and the direct computation shows that
 \begin{multline*}
 T_\alpha(c/h^s)T_\beta(c'/h^s)[u,\,v]=T_\alpha(c/h^s)[x_\beta(b'/h^s)\cdot u,\,c_{\,\overline\iota(u^{-1})}(x_\beta(a'h^{n-s}))\cdot v]=\\
 =[x_\alpha(b/h^s)\cdot x_\beta(b'/h^s)\cdot u,\,c_{\,\overline\iota(u^{-1}x_\beta(-b'/h^s))}(x_\alpha(ah^{n-s}))\cdot c_{\,\overline\iota(u^{-1})}(x_\beta(a'h^{n-s}))\cdot v]=\\
 =[x_\alpha(b/h^s)\cdot x_\beta(b'/h^s)\cdot u,\,c_{\,\overline\iota(u^{-1})}(x_\alpha(ah^{n-s}))\cdot c_{\,\overline\iota(u^{-1})}(x_\beta(a'h^{n-s}))\cdot v].
 \end{multline*}
 Using the relation~\eqref{R2} in $\mathrm{St}(\Phi,\,B_h)$ and $\mathrm{St}(\Phi,\,A)$, by symmetry we get the claim.
\end{proof}

\begin{lemma} \label{lem:R3-check} The operators $T_\alpha$ satisfy relations~\eqref{R3}. \end{lemma}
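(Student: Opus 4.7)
\textbf{Proof plan for Lemma~\ref{lem:R3-check}.} I follow the same pattern as Lemmas~\ref{lem:R1}--\ref{lem:R2}, but now have to account for the genuine commutator term produced by~\eqref{R3}. The assertion to verify is that for $\alpha,\beta\in\Phi$ with $\alpha+\beta\in\Phi$, any $c,c'\in A$, and any $s,s'\geq 0$, the operators on $V$ satisfy
\[
T_\alpha(c/h^s)\,T_\beta(c'/h^{s'}) \;=\; T_{\alpha+\beta}\bigl(N_{\alpha\beta}\,cc'/h^{s+s'}\bigr)\,T_\beta(c'/h^{s'})\,T_\alpha(c/h^s).
\]
Fix $[u,v]\in V$ and pick $k$ sufficiently large (larger than $s+s'+n(u)$ and larger than the ``stabilization level'' of the finitely many pre-morphism representatives of $\mathrm{conj}(\cdot)$ that appear below; by Lemma~\ref{well-def} and Remark~\ref{rem:indendepence} this is harmless). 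Decompose $c=ah^k+b$ and $c'=a'h^k+b'$ with $a,a'\in A$, $b,b'\in B$, and expand
\[
cc' \;=\; bb' \;+\; ab'h^k \;+\; a'bh^k \;+\; aa'h^{2k},
\]
whose $B$-part is $bb'$, used to evaluate the outermost $T_{\alpha+\beta}$-factor on the right-hand side.

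Apply~\eqref{eq:generator-action} repeatedly to expand both sides. The first coordinates match by the Chevalley commutator formula~\eqref{R3} inside $\St(\Phi,B_h)$, which moves $x_\alpha(b/h^s)$ past $x_\beta(b'/h^{s'})$ at the cost of producing exactly the factor $x_{\alpha+\beta}(N_{\alpha\beta}bb'/h^{s+s'})$ that is introduced on the right by $T_{\alpha+\beta}(N_{\alpha\beta}cc'/h^{s+s'})$. The real work is in comparing the second coordinates. Here the two crucial tools are the fact that $\mathrm{conj}$ is a \emph{group} homomorphism (Proposition~\ref{prop:conj-action}) and the identities~\eqref{conj1}--\eqref{conj2}. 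Writing $u^{(1)} = x_\beta(b'/h^{s'})u$ we have $\mathrm{conj}(\overline\iota({u^{(1)}}^{-1})) = \mathrm{conj}(\overline\iota(u^{-1})) \circ \mathrm{conj}(\overline\iota(x_\beta(-b'/h^{s'})))$, and applying~\eqref{conj2} contributes a cross-term $x_{\alpha+\beta}(N_{\alpha\beta}\,ab'\,h^{k-s-s'})$ in the degree-$1$ component $c_{\overline\iota(\cdot)}$. Symmetrically, on the right-hand side the analogous cross-conjugation through $x_\alpha(-b/h^s)$ contributes $x_{\alpha+\beta}(-N_{\alpha\beta}\,a'b\,h^{k-s-s'})$. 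Finally, the commutator formula~\eqref{R3} inside $\St(\Phi,A)$ applied to $[x_\alpha(ah^{k-s}),x_\beta(a'h^{k-s'})]$ supplies an additional factor $x_{\alpha+\beta}(N_{\alpha\beta}\,aa'\,h^{2k-s-s'})$. Assembling these under $c_{\overline\iota(u^{-1})}$ and using additivity of $x_{\alpha+\beta}(\cdot)$, both second coordinates reduce to
\[
c_{\overline\iota(u^{-1})}\!\Bigl(x_{\alpha+\beta}\bigl(N_{\alpha\beta}(ab'+aa'h^k)h^{k-s-s'}\bigr)\,x_\beta(a'h^{k-s'})\,x_\alpha(ah^{k-s})\Bigr)\cdot v,
\]
thereby establishing the desired equality in $V$.

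Two points make the algebra work out cleanly. First, the simply-laced hypothesis ensures that $2\alpha+\beta$ and $\alpha+2\beta$ are not roots, so~\eqref{conj1} gives that $\mathrm{conj}(x_\alpha(\cdot))$ and $\mathrm{conj}(x_\beta(\cdot))$ both fix $x_{\alpha+\beta}$ in the pro-group sense; this shows that when we apply $T_{\alpha+\beta}(N_{\alpha\beta}cc'/h^{s+s'})$ to an orbit whose first coordinate has been modified by $x_\beta(b'/h^{s'})x_\alpha(b/h^s)u$ we can replace the $c_{\overline\iota((x_\beta x_\alpha u)^{-1})}(x_{\alpha+\beta}(\cdot))$ in~\eqref{eq:generator-action} by $c_{\overline\iota(u^{-1})}(x_{\alpha+\beta}(\cdot))$, removing an otherwise problematic nested conjugation. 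Second, the identity $N_{\beta\alpha}=-N_{\alpha\beta}$ is what makes the two cross-conjugation contributions combine with the commutator in $\St(\Phi,A)$ to give precisely the $A$-part of $N_{\alpha\beta}cc'$.

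The main obstacle is not conceptual but combinatorial: one must carefully trace each of the four summands of $cc'$ to exactly one source --- $bb'$ from the commutator in $\St(\Phi,B_h)$, $ab'h^k$ and $a'bh^k$ from the two~\eqref{conj2}-type cross-conjugations with the correct signs, and $aa'h^{2k}$ from the commutator in $\St(\Phi,A)$ --- and verify that no other interactions contribute. Well-definedness of each $c_{\overline\iota(\cdot)}$ (Remark~\ref{rem:indendepence}) and the freedom to enlarge $k$ ensure that all ambiguities dissolve, so the verification reduces to a finite sequence of applications of the Chevalley commutator formulas~\eqref{R2}--\eqref{R3} in $\St(\Phi,B_h)$ and $\St(\Phi,A)$, combined with~\eqref{conj1}--\eqref{conj2}.
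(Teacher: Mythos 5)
Your proposal is correct and follows essentially the same route as the paper: the same decomposition $c=ah^k+b$, $c'=a'h^k+b'$ with $cc'=bb'+(ab'+a'b+aa'h^k)h^k$, the same three conjugation identities (the two \eqref{conj2}-type cross-terms with opposite signs via $N_{\beta\alpha}=-N_{\alpha\beta}$, and the collapse of the nested conjugation on $x_{\alpha+\beta}$ coming from \eqref{conj1} in the simply-laced case), and the final matching via \eqref{R3} in $\St(\Phi,B_h)$ and $\St(\Phi,A)$. The only cosmetic difference is that you allow distinct exponents $s,s'$ where the paper normalizes to a common denominator.
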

\begin{proof}
The idea of the proof is the same as in Lemmas~\ref{lem:R1}--\ref{lem:R2} above: since $T_\alpha(c/h^s)$ are well-defined by Lemma~\ref{well-def}, we can use the decomposition $A=B+h^nA$ with arbitrary large $n$, and this allows to reduce that claim to the commutator formulas in $\mathrm{St}(\Phi,\,B_h)$ and $\mathrm{St}(\Phi,\,A)$.

 Fix a natural number $s$ and a pair of roots $\alpha, \beta \in \Phi$ such that $\alpha+\beta\in\Phi$. Set $\epsilon = N_{\alpha, \beta}$. From the definition of composition of pro-group morphisms and the definition of $\mathrm{conj}$ it follows that for any $u \in \St(\Phi, B_h)$ there exists sufficiently large natural number $n$ such that for all $a \in A$, $b, b'\in B$ the following equalities are simultaneously fulfilled:
 \begin{align}
 c_{ \overline{\iota}((x_\beta(b/h^s) \cdot u)^{-1})}(x_\alpha(ah^{n-s})) &= c_{\overline{\iota}(u^{-1})}\left(x_\alpha(ah^{n-s}) \cdot x_{\alpha+\beta}(\epsilon ab h^{n-2s})\right), \label{eq:R3-check-1}\\
 c_{ \overline{\iota}((x_\alpha(b/h^s) \cdot u)^{-1})}(x_\beta(ah^{n - s})) &= c_{\overline{\iota}(u^{-1})}\bigl(x_{\alpha + \beta}(-\epsilon abh^{n - 2s})\cdot x_\beta(ah^{n - s})\bigr), \label{eq:R3-check-2} \\
 c_{ \overline{\iota}((x_\beta(b'/h^s) \cdot x_\alpha(b/h^s) \cdot u)^{-1})}\bigl(x_{\alpha + \beta}\bigl(a h^{n - 2s}\bigr)\bigr) &= c_{\overline{\iota}(u^{-1})}\bigl(x_{\alpha + \beta}\bigl(a h^{n - 2s}\bigr)\bigr). \label{eq:R3-check-3}
 \end{align} 
Now fix $c, c\in A$. Our goal is to verify the equality 
\begin{equation} \label{eq:R3-to-check} T_\alpha(c/h^s) \cdot T_\beta(c'/h^s) \cdot [u, v] = T_{\alpha+\beta}(\epsilon cc' / h^{2s}) \cdot T_\beta(c'/h^s) \cdot T_\alpha(c/h^s) \cdot [u, v].\end{equation}
As in~\eqref{eq:decomp} we can choose decompositions $c = b + ah^n,\ c' = b' + a'h^n$ for some $a, a' \in A$ and $b, b' \in B$. It is also clear that $cc' = bb' + h^{n}a''$, where $a'' = aa'h^n + ab' + a'b$. Now we can compute the left-hand side of~\eqref{eq:R3-to-check} using~\eqref{eq:generator-action} and~\eqref{eq:R3-check-1} as follows:
\begin{multline*}
 T_\alpha(c/h^s) \cdot T_\beta(c'/h^s) \cdot [u, v] = T_\alpha(c/h^s) \cdot [x_\beta(b'/h^s) \cdot u,\ c_{\overline{\iota}(u^{-1})}(x_\beta(a'h^{n-s})) \cdot v] = \\ = [x_\alpha(b/h^s) \cdot x_\beta(b'/h^s) \cdot u, c_{\overline{\iota}(u^{-1})}(x_\alpha(ah^{n-s}) \cdot x_{\alpha+\beta}(\epsilon ab'h^{n-2s}) \cdot x_\beta(a'h^{n-s}))\cdot v]. \end{multline*}
Similarly, we can compute the right-hand side of~\eqref{eq:R3-to-check} using~\eqref{eq:generator-action} and~\eqref{eq:R3-check-2}--\eqref{eq:R3-check-3}:
\begin{align*}
&T_{\alpha+\beta}(\epsilon cc'/h^{2s}) \cdot T_\beta(c'/h^s) \cdot T_\alpha(c/h^s) \cdot [u, v] = \\ 
&= T_{\alpha+\beta}(\epsilon cc'/h^{2s})\cdot[x_\beta(b'/h^s) \cdot x_\alpha(b/h^s) \cdot u,\,c_{ \overline{\iota}((x_\alpha(b/h^s) \cdot u)^{-1})}(x_\beta(a'h^{n - s}))\cdot c_{\overline{\iota}(u^{-1})}(x_\alpha(ah^{n-s}))\cdot v]=\\
&\ \begin{aligned}
=[&x_{\alpha+\beta}(\epsilon bb'/h^{2s}) \cdot x_\beta(b'/h^s) \cdot x_\alpha(b/h^s) \cdot u,\\
&c_{ \overline{\iota}((x_\beta(b'/h^s) \cdot x_\alpha(b/h^s) \cdot u)^{-1})}\bigl(x_{\alpha + \beta}\bigl(\epsilon a'' h^{n - 2s}\bigr)\bigr)\cdot c_{\overline{\iota}(u^{-1})}\bigl(x_{\alpha + \beta}(-\epsilon a'bh^{n - 2s})\cdot x_\beta(a'h^{n - s})\cdot x_\alpha(ah^{n-s}\bigr)\cdot v]=
\end{aligned}\\
&= [x_{\alpha+\beta}(\epsilon bb'/h^{2s}) \cdot x_\beta(b'/h^s) \cdot x_\alpha(b/h^s) \cdot u,\, c_{\overline{\iota}(u^{-1})}(x_{\alpha+\beta}(\epsilon h^{n-2s}(a''- a'b)) \cdot x_\beta(a'h^{n-s}) \cdot x_\alpha(ah^{n-s}))\cdot v]. \end{align*}
Finally, it follows from~\eqref{R3} in $\mathrm{St}(\Phi,\,B_h)$ and $\mathrm{St}(\Phi,\,A)$ that the right-hand sides of the last two formulae coincide. Clearly, 
$$
x_\alpha(b/h^s) \cdot x_\beta(b'/h^s)=x_{\alpha+\beta}(\epsilon bb'/h^{2s}) \cdot x_\beta(b'/h^s) \cdot x_\alpha(b/h^s),
$$
and therefore the first components of the expressions coincide, and using
$$
x_\alpha(ah^{n-s}) \cdot x_\beta(a'h^{n-s})=x_{\alpha+\beta}(\epsilon aa'h^{2n-2s}) \cdot x_\beta(a'h^{n-s}) \cdot x_\alpha(ah^{n-s}),
$$
it remains to observe that $ab'h^{n-2s}+aa'h^{2n-2s}=(a''-a'b)h^{n-2s}$ to conclude that the second components also coincide.
\end{proof}

It follows that the action of $\St(\Phi, A_{h})$ on $V$ is well-defined. It remains to prove the following result.

\begin{lemma} The constructed action satisfies conditions~(\ref{additional}). 
\end{lemma}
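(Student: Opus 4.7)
The plan is to prove both identities in \eqref{additional} by induction on word length in the Steinberg generators, reducing each to a single-generator verification that can be handled by directly unwinding the definition of $T_\alpha$. Since Lemmas~\ref{lem:R1}--\ref{lem:R3-check} have already established that the $T_\alpha$ satisfy the defining relations of $\St(\Phi, A_h)$, the inductive step in each case amounts to pushing a single generator past an existing orbit.

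For the first identity $\lambda_h(v) \cdot [1,1] = [1, v]$, I would induct on the length of $v \in \St(\Phi, A)$ as a word in the root unipotents $x_\alpha(a)$, $a \in A$. The inductive step reduces to verifying
\[
T_\alpha(a/1) \cdot [1, v'] = [1,\, x_\alpha(a) \cdot v'] \quad\text{for all } a \in A,\ \alpha \in \Phi,\ v' \in \St(\Phi, A).
\]
To compute the left-hand side, I would pick a decomposition $a = b + a'h^k$ with $b \in B$, $a' \in A$, $k$ large. The key observation is that $\mathrm{conj}(\overline{\iota}(1)) = \mathrm{id}$ because $\mathrm{conj}$ is a group homomorphism, so by Remark~\ref{rem:indendepence} the degree-one component $c_{\overline{\iota}(1)}$ may be assumed to agree with the canonical embedding on $x_\alpha(a'h^k)$ once $k$ is large enough. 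Substituting into \eqref{eq:generator-action} yields $[x_\alpha(b),\, x_\alpha(a'h^k) \cdot v']$; applying the $\star$-action of $w = x_\alpha(b) \in \St(\Phi, B)$ (permissible precisely because $b \in B$) collapses this to $[1,\, x_\alpha(b)\cdot x_\alpha(a'h^k) \cdot v'] = [1,\, x_\alpha(a) \cdot v']$ by \eqref{R1}.

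For the second identity $\overline{\iota}(u) \cdot [1, v] = [u, v]$, I would induct on the length of $u \in \St(\Phi, B_h)$ as a word in generators $x_\alpha(c/h^s)$ with $c \in B$, $s \geq 0$. The inductive step reduces to
\[
T_\alpha(\iota(c)/h^s) \cdot [u',\, v] = [x_\alpha(c/h^s) \cdot u',\, v].
\]
Here the trivial decomposition $\iota(c) = \iota(c) + 0 \cdot h^k$ (valid since $\iota(c) \in B$, for any $k \geq n(u') + s$) does all the work: since $x_\alpha(0) = 1$ in $\St(\Phi, h^k A)$ and hence $c_{\overline{\iota}(u'^{-1})}(x_\alpha(0)) = 1$, the definition of $T_\alpha$ delivers $[x_\alpha(c/h^s) \cdot u',\, v]$ on the nose, with no orbit adjustment required.

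The main obstacle is conceptual rather than computational: justifying in the first identity that $c_{\overline{\iota}(1)}$ may be taken to act as the identity on $x_\alpha(a'h^k)$ for large $k$. This is formally a consequence of Remark~\ref{rem:indendepence} together with the homomorphism property of $\mathrm{conj}$, but it is the step that genuinely invokes the pro-group machinery developed in Subsections~\ref{pro-completion}--\ref{pro-groups}. Once this point is granted, both verifications are immediate applications of \eqref{R1} and the orbit relation defining $V$.
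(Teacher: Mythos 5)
Your proof is correct and follows essentially the same route as the paper: induction on word length reduces everything to a single-generator computation, which is then settled by exploiting the freedom (guaranteed by Lemma~\ref{well-def}) in choosing the decomposition of $c$ and the representative of $\mathrm{conj}(1)$. The only cosmetic difference is that for the first identity the paper takes the degenerate decomposition $a = a\cdot h^0 + 0$ with $c_1 = \mathrm{id}$ outright, so no orbit adjustment is needed, whereas you take a general decomposition $a = b + a'h^k$ and then collapse the first component via the $\star$-action; both variants rest on the same well-definedness statement.
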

\begin{proof}
Indeed, using the fact that the operators $T_\alpha(a)$ are well-defined by Lemma~\ref{well-def}, for $a\in A$ we can choose a decomposition $a=ah^0+0$ and for $u=1\in\mathrm{St}(\Phi,\,B_h)$ we can choose $c_{1}$ to be the identity map. Then by the very definition
$$
T_\alpha(a)\cdot[1,\,v]=[1,\,x_\alpha(a)\cdot v],
$$
and by induction we conclude that $\lambda_h(v)\cdot[1,\,1]=[1,\,v]$ for any $v\in\mathrm{St}(\Phi,\,A)$. Next, for $b\in B$ we can use a decomposition $b=0\cdot h^0+b$, then by definition
$$
T_\alpha(b/h^s)\cdot[u,\,v]=[x_\alpha(b/h^s)\cdot u,\,c_{\,\overline\iota(u^{-1})}(1)\cdot v]=[x_\alpha(b/h^s)\cdot u,\,v],
$$
and by induction we conclude that $\overline\iota(u)\cdot[1,\,v]=[u,\,v]$ for any $u\in\mathrm{St}(\Phi,\,B_h)$.
\end{proof}

Therefore we conclude that the sequence~(\ref{stavrova-sequence}) is exact by Corollary~\ref{corollary-in-the-end}, and therefore Theorem~\ref{glueing} holds.

\subsection*{Funding}
The work on \S~2 was supported by the Ministry of Science and Higher Education of the Russian Federation, agreement No. 075-15-2019-1619. The work on \S~3 was supported by the Russian Science Foundation grant No. 19-71-30002. The work on \S~4 was supported by the Foundation for the Advancement of Theoretical Physics and Mathematics ``BASIS''.
The first-named author is a winner of the Young Russian Mathematics contest and would like to thank its sponsors and the jury.

\subsection*{Acknowledgements}
 The authors would like to thank A. Stavrova for suggesting several ideas used in this paper and the anonymous referees for suggesting a number of comments which helped to improve this article. We would also like to thank P.~Gvozdevsky, S.~Ivanov, A.~Sawant, V.~Sosnilo and N.~Vavilov for their useful comments and interest in this work.

\end{document}